\documentclass{amsart}

\usepackage{amssymb}
\usepackage{amsmath}
\usepackage{amsthm}
\usepackage[mathscr]{eucal}

\newtheorem{thrm}{Theorem}[section]
\newtheorem{lemma}[thrm]{Lemma}
\newtheorem{prop}[thrm]{Proposition}
\newtheorem{cor}[thrm]{Corollary}

\theoremstyle{definition}
\newtheorem{defn}[thrm]{Definition}

\theoremstyle{remark}

\numberwithin{equation}{section}

\newcommand{\mdbar}{\bar{\partial}}

\newcommand{\lre}{\mathscr{E}}
\newcommand{\lra}{\mathscr{A}}

\begin{document}

\bibliographystyle{plain}

\title[$C^k$ estimates on non-smooth domains]
{Weighted $C^k$ estimates for a class of integral operators
 on non-smooth domains}

\author{Dariush Ehsani}

\address{Department of Mathematics, Penn State - Lehigh Valley, Fogelsville,
PA 18051}
 \email{ehsani@psu.edu}
\curraddr{Humboldt-Universit\"{a}t, Institut f\"{u}r Mathematik,
10099 Berlin }

\subjclass[2000]{Primary 32A25, 32W05; Secondary 35B65}

\thanks{Partially supported by the Alexander von Humboldt Stiftung}

\begin{abstract}
  We apply integral representations for
$(0,q)$-forms, $q\ge1$, on non-smooth strictly pseudoconvex
domains, the Henkin-Leiterer domains, to derive weighted $C^k$
estimates for a given $(0,q)$-form, $f$, in terms of $C^k$ norms
of $\mdbar f$, and $\mdbar^{\ast} f$.  The weights are powers of
the gradient of the defining function of the domain.
\end{abstract}

\maketitle
\ \\
\section{Introduction}
Let $X$ be an $n$-dimensional complex manifold, equipped with a
Hermitian metric, and $D\subset\subset X$ a strictly pseudoconvex
domain with defining function $r$.  Here we do not assume the
non-vanishing of the gradient, $dr$, thus allowing for the
possibility of singularities in the boundary, $\partial D$ of $D$.
 We refer to such domains as Henkin-Leiterer
domains, as they were first systematically studied by Henkin and
Leiterer in \cite{HL}.

  We shall make the additional assumtion that $r$ is a Morse
function.

Let $\gamma=|\partial r|$. In \cite{Eh09a} the author established
an integral representation of the form
\begin{thrm}
\label{intrthrm}   There exist integral operators
$\tilde{T}_q:L^2_{(0,q+1)}(D)\rightarrow L^2_{(0,q)}(D)$ $0\le
q<n=\dim X$ such that for $f\in L^2_{(0,q)}\cap
Dom(\bar{\partial})\cap Dom(\bar{\partial}^{\ast})$ one has
\begin{equation*}
\gamma^3
 f=\tilde{T}_q\bar{\partial}f+\tilde{T}_{q-1}^{\ast}\bar{\partial}^{\ast}f
+\mbox{ error terms } \quad \mbox{ for } q\ge 1.
\end{equation*}
\end{thrm}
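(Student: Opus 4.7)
\emph{Proof plan.} The natural starting point is the Bochner--Martinelli--Koppelman (BMK) homotopy formula on $D$, which for a $(0,q)$-form $f$ reads
\[
f \;=\; -\bar\partial\, B_{q-1}f \;-\; B_q\,\bar\partial f \;+\; B_q^{\partial}f,
\]
where $B_q^{\partial}$ is a boundary integral and $B_{q-1},B_q$ are solid BMK operators. On a smooth strictly pseudoconvex domain one then applies the Cauchy--Fantappi\'e calculus with a Henkin--Ramirez barrier section $w(\zeta,z)$ whose pairing $\langle w,\zeta-z\rangle=\Phi(\zeta,z)$ is a holomorphic support function; the substitution is formal and goes through for any section $w$ for which $\mathrm{Re}\,\Phi$ has an appropriate lower bound on $\overline{D}\times D$ off the diagonal.

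For Henkin--Leiterer domains the difficulty is that $\partial r$ vanishes at the finitely many Morse critical points of $r$, so the classical Levi polynomial is not a uniform support function. Following Henkin--Leiterer I would rescale to a weighted section of the form $\gamma^{2}\partial r$ plus a Hessian correction, producing a modified barrier $\tilde\Phi(\zeta,z)$ whose real part is bounded below by constants times $\gamma^{2}|\zeta-z|^{2}-\gamma^{2}r$. Inserting $\tilde\Phi$ into the Cauchy--Fantappi\'e construction and carefully tracking the $\gamma$-factors produced by the rescaled section and by the clearing of denominators yields a weighted homotopy identity
\[
\gamma^{3}f \;=\; \tilde T_q\,\bar\partial f \;+\; \bar\partial\,\tilde T_{q-1}f \;+\; (\text{boundary piece}),
\]
with $\tilde T_q,\tilde T_{q-1}$ the new solid kernels built from $\tilde\Phi$.

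The next step is to convert $\bar\partial\,\tilde T_{q-1}f$ together with the boundary piece into an expression of the form $\tilde T_{q-1}^{\ast}\bar\partial^{\ast}f$ modulo error. Here the hypothesis $f\in Dom(\bar\partial^{\ast})$ is essential: it supplies the boundary condition $\bar\partial r\lrcorner f=0$ on $\partial D$, and Stokes' theorem applied to the pairing of $f$ with the kernel of $\tilde T_{q-1}$ then transfers $\bar\partial$ from the kernel onto $f$, producing $\bar\partial^{\ast}f$ against the transposed kernel, which is precisely the Schwartz kernel of the formal adjoint $\tilde T_{q-1}^{\ast}$. The lower-order contributions generated by the integration by parts, together with the portion of the boundary piece that is not cancelled, are exactly the ``error terms'' advertised in the statement.

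The main technical obstacle is the kernel analysis at the critical points of $r$, where $\gamma\to 0$ and $\tilde\Phi$ degenerates. Using the Morse assumption one puts $r$ into its quadratic normal form in a chart around each critical point and reduces all estimates to explicit model computations there. The weight $\gamma^{3}$ on the left of the formula is exactly the power that allows the pointwise bounds on $\tilde\Phi^{-1}$ from the weighted support property to combine with the kernel numerators so as to produce $L^{2}$-bounded operators $\tilde T_q$, $\tilde T_{q-1}^{\ast}$ uniformly across the critical set; establishing these uniform bounds and showing that the accumulated error terms are subordinate to the main terms is the crux of the argument and what needs the most care.
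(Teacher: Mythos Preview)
The paper does not contain a proof of this theorem. Theorem~\ref{intrthrm} is stated in the introduction as a result established in the author's earlier paper \cite{Eh09a}; the more precise version, Theorem~\ref{basicintrep}, is likewise quoted from \cite{Eh09a} without proof. So there is no ``paper's own proof'' to compare your proposal against.

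That said, the paper's setup in Section~\ref{adms} does reveal the architecture of the construction in \cite{Eh09a}, and it differs from your plan in a couple of respects. You propose to rescale the barrier section itself to $\gamma^{2}\partial r$ plus a Hessian correction. The paper instead keeps the unweighted Levi polynomial $F(\zeta,z)$ and builds $\phi_{\epsilon}$ from it in the standard way; the $\gamma$-weighting enters through the auxiliary function
\[
P_{\epsilon}(\zeta,z)=\rho^{2}(\zeta,z)+\frac{r_{\epsilon}(\zeta)}{\gamma(\zeta)}\,\frac{r_{\epsilon}(z)}{\gamma(z)},
\]
which replaces the usual $\rho^{2}+r r^{\ast}$ in the Lieb--Range kernels. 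The construction also proceeds by exhausting $D$ by smooth strictly pseudoconvex $D_{\epsilon}=\{r<-\epsilon\}$ and passing to the limit, rather than working directly on the singular domain. Finally, in the detailed formula of Theorem~\ref{basicintrep} the $\gamma^{3}$ on the left arises as $\gamma^{\ast}\cdot\gamma^{2}$, with the operators acting on $\gamma^{2}f$ rather than on $f$; this is not quite the same packaging as in your sketch.

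Your outline---BMK/Koppelman homotopy, Cauchy--Fantappi\`e modification, integration by parts using $f\in\mbox{Dom}(\mdbar^{\ast})$ to trade $\mdbar$ on the kernel for $\mdbar^{\ast}$ on $f$, Morse normal form at the critical points---is the right overall shape and matches the Lieb--Range philosophy the paper is built on. But the specific mechanism you propose for inserting the $\gamma$-weights is not the one the paper's kernel calculus reflects, and your sketch omits the smooth-exhaustion/limit step that the setup here clearly relies on.
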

Theorem \ref{intrthrm} is valid under the assumption we are
working with the Levi metric.  With local coordinates denoted by
$\zeta_1,\ldots,\zeta_n$, we define a Levi metric in a
neighborhood of $\partial D$ by
\begin{equation*}
ds^2= \sum_{j,k} \frac{\partial^2 r}{\partial \zeta_j, \partial
\overline{\zeta}_k} (\zeta).
\end{equation*}
A Levi metric on $X$ is a Hermitian metric which is a Levi metric
in a neighborhood of $\partial D$.  From what follows we will be
working with $X$ equipped with a Levi metric.

The author then used properties of the operators in the
representation to establish the estimates
\begin{thrm} For $f\in L^2_{0,q}(D)\cap\mbox{Dom}(\mdbar)\cap
\mbox{Dom}(\mdbar^{\ast})$, $q\ge 1$, \label{k=0}
\begin{equation*}
\|\gamma^{3(n+1)}f\|_{L^{\infty}}\lesssim \|\gamma^2\mdbar
f\|_{\infty}+\|\gamma^2\mdbar^{\ast}f\|_{\infty}+\|f\|_2 .
\end{equation*}
\end{thrm}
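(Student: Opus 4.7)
The plan is to iterate Theorem~\ref{intrthrm} a total of $n+1$ times. Writing the representation schematically as
\begin{equation*}
\gamma^3 f = \tilde{T}_q \mdbar f + \tilde{T}_{q-1}^{\ast} \mdbar^{\ast} f + R f,
\end{equation*}
where $R$ denotes the operator producing the error terms, each substitution of this identity into the residual piece involving $f$ itself produces one more factor of $\gamma^3$ on the left together with an additional composition of $R$ on the right. Carrying out $n+1$ substitutions yields an identity of the form
\begin{equation*}
\gamma^{3(n+1)} f = \sum_{k=0}^{n} \gamma^{3(n-k)} R^{k}\!\bigl(\tilde{T}_q \mdbar f + \tilde{T}_{q-1}^{\ast} \mdbar^{\ast} f\bigr) + R^{n+1} f,
\end{equation*}
with the interleaved powers of $\gamma$ tracked carefully under the compositions.

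The proof then reduces to two families of mapping properties. First, one needs weighted $L^\infty$ estimates of the form $\|\tilde{T}_q g\|_{\infty} \lesssim \|\gamma^2 g\|_{\infty}$, and analogously for $\tilde{T}_{q-1}^{\ast}$; applied with $g=\mdbar f$ and $g=\mdbar^{\ast}f$ these account for the weights $\gamma^2$ appearing on the right-hand side of the statement. Second, one needs to show that $R$ gains a definite amount of integrability at every application, so that after enough compositions $R^{n+1}$ is bounded $L^2(D)\to L^\infty(D)$; this accounts for the final term $\|f\|_2$. Both families should follow from the explicit description of the kernels in \cite{Eh09a}, via Schur-type estimates or, step by step, H\"older's inequality combined with size bounds on the kernels, with the $n+1$ iterations corresponding to the dimensional gap one has to close between $L^2$ and $L^\infty$.

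The principal obstacle will be the analysis near the critical locus of $r$ on $\partial D$, where $\gamma$ vanishes and the Levi metric degenerates. Exactly this degeneration is what forces multiple iterations, since in the smooth strictly pseudoconvex case a single application of the representation together with a standard $L^p\to L^\infty$ step would suffice; in the Henkin-Leiterer setting, each iteration yields only a $\gamma^3$-weighted gain. A careful analysis of the kernels in the coordinates of \cite{HL} around the singular boundary points, combined with the standard bounds on Henkin-Leiterer kernels away from them, is what should produce the weighted size estimates needed for both families of mapping properties above.
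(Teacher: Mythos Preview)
Your approach is essentially the one the paper uses (for the closely related H\"older version proved in Section~\ref{ck}; the $L^\infty$ statement itself is quoted from \cite{Eh09a}): rewrite the representation as $\gamma^3 f = Z_1\gamma^2\mdbar f + Z_1\gamma^2\mdbar^{\ast} f + Z_1 f$ (equation~(\ref{z1cal})), iterate to obtain (\ref{basiciterate}), and then invoke the $L^p$ mapping properties of the $Z_j$ (Proposition~\ref{yngcor}, Theorem~\ref{E1properties}~$i)$, Lemma~\ref{A1property}~$i)$) to bound $Z_j f$ from $\|f\|_2$ and the remaining terms from $\|\gamma^2\mdbar f\|_\infty+\|\gamma^2\mdbar^{\ast}f\|_\infty$. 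One correction to your schematic: in the actual iteration the powers of $\gamma$ sit \emph{inside} the compositions, as $Z_k\gamma^{3(j-k)+2}\mdbar f$, rather than outside as in your displayed formula---this is exactly the ``careful tracking'' you allude to, and it matters because multiplication by $\gamma(z)$ does not commute with the integral operators.
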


In this paper, we examine the operators in the integral
representation, derive more detailed properties of such operators
under differentiation, and use the properties to establish $C^k$
estimates.  Our main theorem is
\begin{thrm}
 \label{ourthrm}
 Let $f\in L^2_{0,q}(D)\cap\mbox{Dom}(\mdbar)\cap
\mbox{Dom}(\mdbar^{\ast})$, $q\ge 1$, and $\alpha<1/4$.  Then for
$N(k)$ large enough we have
\begin{equation*}
\|\gamma^{N(k)} f\|_{C^{k+\alpha}}\lesssim
 \|\gamma^{k+2}
\mdbar f\|_{C^{k}}+\|\gamma^{k+2}
\mdbar^{\ast}f\|_{C^{k}}+\|f\|_{2}.
\end{equation*}
\end{thrm}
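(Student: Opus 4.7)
The plan is to take the integral representation from Theorem \ref{intrthrm},
\begin{equation*}
\gamma^3 f = \tilde{T}_q\mdbar f + \tilde{T}_{q-1}^{\ast}\mdbar^{\ast}f + E,
\end{equation*}
and apply $k$ derivatives to both sides, then use Hölder estimates for the resulting singular integral operators. On the left, the Leibniz rule produces $\gamma^3\partial^\beta f$ (for $|\beta|=k$) plus a sum of terms of the form $(\partial^{\beta'}\gamma^3)\,\partial^{\beta''}f$, $|\beta''|<k$. The derivatives of $\gamma^3$ are controlled (they only vanish faster as $\gamma\to 0$), so these lower-order pieces are absorbed by the inductive hypothesis. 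On the right, one differentiates under the integral sign, which produces integral operators with more singular kernels but the same overall Henkin-Cauchy-Fantappiè structure.

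The technical heart is the analysis of the differentiated kernels. Following the constructions from \cite{Eh09a}, each kernel is built from the Levi polynomial of $r$ and its denominator contains powers of the Levi section whose modulus is bounded below by $\gamma$ times the standard Henkin denominator. Consequently, every outer derivative one takes raises the order of the singularity by one but also pulls out a factor of $1/\gamma$. This is exactly the bookkeeping that forces us to place $\gamma^{k+2}$ in front of $\mdbar f$ and $\mdbar^{\ast}f$: two powers to compensate the weights already present in the $k=0$ estimate (Theorem \ref{k=0}) and one extra power per derivative to offset the worsening singularity near critical points of $r$. The remaining powers of $1/\gamma$ that cannot be eliminated are collected on the left-hand side in the factor $\gamma^{N(k)}$, where $N(k)$ grows linearly in $k$.

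Once the kernels are expressed in this weighted form, I would apply the standard Hölder estimates for Henkin-type integrals: away from the singular set $\{\gamma=0\}$ these are classical, while near $\{\gamma=0\}$ the Morse hypothesis on $r$ gives a precise model (a nondegenerate quadratic form) for which $\gamma\sim|\zeta-p|$. A careful change of variables near each critical point reduces the Hölder estimate to a computation on a ball, and the resulting exponent is $1/2$ on the quadratic model but is cut in half by the square-root behaviour of $\gamma$, yielding the bound $\alpha<1/4$. The error term $E$ in the integral representation is smoothing, so its $k$-th derivative is controlled by $\|f\|_2$ after multiplication by a sufficiently high power of $\gamma$; this accounts for the $\|f\|_2$ term on the right.

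The main obstacle will be the careful kernel bookkeeping in the inductive step: verifying that, after $k$ differentiations, every term that appears in $\partial^\beta(\tilde{T}_q\mdbar f)$ can be written as a sum of operators of the form $\gamma^{-N(k)}\tilde{T}^{(k)}_q(\gamma^{k+2}\partial^{\beta'}\mdbar f)$, with $\tilde{T}^{(k)}_q$ a kernel for which the Hölder estimate with exponent $<1/4$ holds uniformly. Once this is in place, the inequality follows from the induction hypothesis, the triangle inequality, and the boundedness of these weighted operators from $C^k$ to $C^{k+\alpha}$.
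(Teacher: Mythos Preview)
Your outline misses the two structural ingredients that actually drive the paper's proof, and the first miss would stop your argument outright.

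\textbf{The error term is not smoothing.}  In the representation of Theorem~\ref{basicintrep} the ``remainder'' is $P_q(\gamma^2 f)$ with $P_q=\frac{1}{\gamma}A_{(-1,1)}+\frac{1}{\gamma^{\ast}}A_{(-1,1)}$, which is only of type $(-1,1)$ and certainly does not send $L^2$ to anything better than $L^2$.  The paper deals with this by \emph{iterating} the representation: writing $\gamma^3 f = Z_1\gamma^2\mdbar f + Z_1\gamma^2\mdbar^{\ast}f + Z_1 f$ and substituting the representation back into the last term repeatedly (equation~(\ref{basiciterate})).  Only after $n+2$ or more iterations does $Z_j:L^2\to L^{\infty}$ (Lemma~\ref{A1property}), and the H\"older gain needs still more.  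Your plan to control $E$ by $\|f\|_2$ after multiplication by a power of $\gamma$ cannot work without this iteration, and the iteration is precisely why $N(k)$ contains the dimensional contribution $3(n+6)$, not just something linear in $k$.

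\textbf{Differentiating under the integral does not preserve the operator class.}  A raw $z$-derivative of a type-$1$ kernel drops the type to $0$ or worse and the resulting operator is not bounded on the spaces you need.  The paper instead proves a \emph{commutator theorem} (Theorem~\ref{commutator}): for a tangential field $X$,
\[
\gamma^{\ast}X^{z}A_1 = -A_1\tilde{X}^{\zeta}\gamma + A_1^{(0)} + \sum_{\nu} A_1^{(\nu)}W_{\nu}^{\zeta}\gamma,
\]
so the derivative is transferred to the argument at the cost of a factor $\gamma$ and extra type-$1$ operators.  This is what makes the induction in Lemma~\ref{tanglemma} close.  Normal derivatives are then reduced to tangential ones via the ellipticity identity~(\ref{normaldecomp}).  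Your Leibniz/differentiate-under-the-integral scheme never produces operators of the same type again, so the inductive step would not close; you also do not distinguish tangential from normal fields, which is essential here because only tangential fields admit the integration-by-parts identity above.
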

We show we may take any $N(k)>3(n+6)+8k$.

Our results are consistent with those obtained by Lieb and Range
in the case of smooth strictly pseudoconvex domains \cite{LR86_2},
where we may take $\gamma=1$.  In, \cite{LR86_2}, an estimate as
in Theorem \ref{ourthrm} with $\gamma=1$ and $\alpha<1/2$ was
given.

In a separate paper we look establish $C^k$ estimates for $f\in
L^2(D)\cap\mbox{Dom}(\mdbar)$, as the functions used in the
construction of the integral kernels in the case $q=0$ differ from
those in the case $q\ge1$.

One of the difficulties in working on non-smooth domains is the
problem of the choice of frame of vector fields with which to
work.  In the case of smooth domains a special boundary chart is
used in which $\omega^n=\partial r$ is part of an orthonormal
frame of $(1,0)$-forms.  When $\partial r$ is allowed to vanish,
the frame needs to be modified.  We get around this difficulty by
defining a $(1,0)$-form, $\omega^n$ by $\partial r = \gamma
\omega^n$.  In the dual frame of vector fields we are then faced
with factors of $\gamma$ in the expressions of the vector fields
with respect to local coordinates, and we deal with these terms by
multiplying our vector fields by a factor of $\gamma$.  This
ensures that when vector fields are commuted, there are no error
terms which blow up at the singularity.

We organize our paper as follows.  In Section \ref{adms} we define
the types of operators which make up the integral representation
established in \cite{Eh09a}.  Section \ref{est} contains the most
essential properties used to obtain our results.  In Section
\ref{est} we consider the properties of our integral operators
under differentiation.  Lastly, in Section \ref{ck} we apply the
properties from Section \ref{est} to obtain our $C^k$ estimates.

The author extends thanks to Ingo Lieb with whom he shared many
fruitful discussions over the ideas presented here, and from whom
he originally had the idea to extend results on smooth domains to
Henkin-Leiterer domains.

\section{Admissible operators}
 \label{adms}
  With local coordinates denoted by $\zeta_1,\ldots,\zeta_n$, we
define a Levi metric in a neighborhood of $\partial D$ by
\begin{equation*}
ds^2= \sum_{j,k} \frac{\partial^2 r}{\partial \zeta_j, \partial
\overline{\zeta}_k} (\zeta)d\zeta_j d\bar{\zeta}_k.
\end{equation*}
A Levi metric on $X$ is a Hermitian metric which is a Levi metric
in a neighborhood of $\partial D$.

We thus equip $X$ with a Levi metric and we take $\rho(x,y)$ to be
a symmetric, smooth function on $X\times X$ which coincides with
the geodesic distance in a neighborhood of the diagonal,
$\Lambda$, and is positive outside of $\Lambda$.

 For ease of
notation, in what follows we will always work with local
coordinates, $\zeta$ and $z$.

      Since $D$ is strictly pseudoconvex and $r$ is a Morse function, we
can take $r_{\epsilon}=r+\epsilon$ for epsilon small enough.  Then
$r_{\epsilon}$ will be defining functions for smooth, strictly
pseudoconvex $D_{\epsilon}$.  For such $r_{\epsilon}$ we have that
all derivatives of $r_{\epsilon}$ are indpendent of $\epsilon$.
 In particular, $\gamma_{\epsilon}(\zeta)=\gamma(\zeta)$ and
$\rho_{\epsilon}(\zeta,z)=\rho(\zeta,z)$.

 Let $F$ be the Levi
polynomial for $D_{\epsilon}$:
\begin{equation*}
F(\zeta,z)
 = \sum_{j=1}^n\frac{\partial
 r_{\epsilon}}{\partial\zeta_j}(\zeta)(\zeta_j-z_j)
  -\frac{1}{2}\sum_{j,k=1}^n\frac{\partial^2
  r_{\epsilon}}{\partial\zeta_j\zeta_k}(\zeta_j-z_j)(\zeta_k-z_k).
\end{equation*}
 We note that $F(\zeta,z)$ is independent of $\epsilon$ since
 derivatives of $r_{\epsilon}$ are.

For $\epsilon$ small enough we can choose $\delta>0$ and
$\varepsilon>0$ and a patching function $\varphi(\zeta,z)$,
independent of $\epsilon$, on $\mathbb{C}^n\times\mathbb{C}^n$
such that
\begin{equation*}
\varphi(\zeta,z)=
 \begin{cases}
1 & \mbox{for } \rho^2(\zeta,z)\le
\frac{\varepsilon}{2}\\
0 &  \mbox{for } \rho^2(\zeta,z)\ge \frac{3}{4}\varepsilon,
\end{cases}
\end{equation*}
and defining $S_{\delta}=\{ \zeta:|r(\zeta)|<\delta\}$,
$D_{-\delta}=\{ \zeta:r(\zeta)<\delta\}$, and
\begin{equation*}
\phi_{\epsilon}(\zeta,z)
 =\varphi(\zeta,z)(F_{\epsilon}(\zeta,z)-r_{\epsilon}(\zeta))
  +(1-\varphi(\zeta,z))\rho^2(\zeta,z),
 \end{equation*}
we have the following
\begin{lemma}
 \label{phiest}
On $D_{\epsilon}\times D_{\epsilon}\bigcap S_{\delta}\times
D_{-\delta}$,
 \begin{equation*}
|\phi_{\epsilon}|\gtrsim |\langle \partial
r_{\epsilon}(z),\zeta-z\rangle|+\rho^2(\zeta,z),
\end{equation*}
where the constants in the inequalities are independent of
$\epsilon$.
\end{lemma}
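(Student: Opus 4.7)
The plan is to split the product domain according to the three regions determined by the cutoff $\varphi$: where $\varphi \equiv 1$ (i.e.\ $\rho^2 \le \varepsilon/2$), where $\varphi \equiv 0$ (i.e.\ $\rho^2 \ge 3\varepsilon/4$), and the transitional annulus in between. Only the first region requires real work; in the other two, $\rho^2$ is bounded below by a fixed positive constant while the right-hand side of the claimed estimate is bounded above on the compact set $D_\epsilon \times D_\epsilon$, so the inequality is trivial (using that $|\partial r_\epsilon|$ and all higher derivatives of $r_\epsilon$ are independent of $\epsilon$).

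For the essential region, $\varphi \equiv 1$ and $\phi_\epsilon = F(\zeta,z) - r_\epsilon(\zeta)$. The plan is to Taylor-expand $r_\epsilon(z)$ about $\zeta$ to second order and compare with the definition of $F$. This should yield the identity
\begin{equation*}
2\,\mathrm{Re}\bigl(F(\zeta,z) - r_\epsilon(\zeta)\bigr)
 = -r_\epsilon(\zeta) - r_\epsilon(z)
 + L_\zeta(\zeta - z, \zeta - z) + O(|\zeta-z|^3),
\end{equation*}
where $L_\zeta$ denotes the Levi form of $r_\epsilon$ at $\zeta$. Since $r_\epsilon \le 0$ on $D_\epsilon$, both $-r_\epsilon(\zeta)$ and $-r_\epsilon(z)$ are non-negative; strict plurisubharmonicity (uniform in $\epsilon$, since the Hessian of $r_\epsilon$ is $\epsilon$-independent) gives $L_\zeta(\zeta-z,\zeta-z) \gtrsim |\zeta-z|^2$; and the cubic error is absorbed by shrinking $\varepsilon$. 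This produces
\begin{equation*}
 \mathrm{Re}(F - r_\epsilon) \;\gtrsim\; |r_\epsilon(\zeta)| + |r_\epsilon(z)| + |\zeta-z|^2.
\end{equation*}

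To recover the term $|\langle \partial r_\epsilon(z), \zeta - z\rangle|$, I would estimate its real and imaginary parts separately. For the real part, Taylor's formula applied only to the first-order term yields $2\,\mathrm{Re}\langle \partial r_\epsilon(\zeta), \zeta-z\rangle = r_\epsilon(\zeta) - r_\epsilon(z) + O(|\zeta-z|^2)$, so
\begin{equation*}
 |\mathrm{Re}\langle \partial r_\epsilon(z), \zeta-z\rangle|
 \;\lesssim\; |r_\epsilon(\zeta)| + |r_\epsilon(z)| + |\zeta-z|^2,
\end{equation*}
where the replacement of $\partial r_\epsilon(\zeta)$ by $\partial r_\epsilon(z)$ costs only $O(|\zeta-z|^2)$. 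For the imaginary part, since the second sum in $F$ is already $O(|\zeta-z|^2)$,
\begin{equation*}
 |\mathrm{Im}\langle \partial r_\epsilon(z), \zeta-z\rangle| \;\le\; |\mathrm{Im}\,F| + O(|\zeta-z|^2) \;\le\; |F - r_\epsilon(\zeta)| + O(|\zeta-z|^2).
\end{equation*}
A convex combination of the bounds on $\mathrm{Re}(F - r_\epsilon)$ and $|\mathrm{Im}\,F|$, with weights chosen so the surplus $|\zeta-z|^2$ from the real part dominates the deficit $O(|\zeta-z|^2)$ from the imaginary part, then delivers $|\phi_\epsilon| \gtrsim |\langle \partial r_\epsilon(z), \zeta-z\rangle| + \rho^2$, using that $\rho$ is comparable to Euclidean distance locally.

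The main subtlety I anticipate is the last combining step: one must be careful that the constants in the convex combination are chosen independently of $\epsilon$, which is why it is important to note explicitly that every derivative of $r_\epsilon$ (and hence the Levi-form lower bound and all Taylor remainder constants) is $\epsilon$-independent. A secondary point worth checking is the transitional region $\varepsilon/2 \le \rho^2 \le 3\varepsilon/4$, where $\phi_\epsilon$ is a convex combination; here $\mathrm{Re}(\phi_\epsilon) = \varphi\,\mathrm{Re}(F-r_\epsilon) + (1-\varphi)\rho^2$ is bounded below by $\rho^2$, which is itself bounded below by a positive constant, so the estimate again reduces to the trivial case.
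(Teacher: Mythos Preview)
The paper does not actually supply a proof of this lemma; it is stated as a known fact and the exposition moves on immediately to the definition of the classes $\lre_{j,k}$. The estimate is the standard lower bound for the modified Levi polynomial on a strictly pseudoconvex domain (cf.\ Henkin--Leiterer or Range), and your plan reproduces that classical argument correctly: the Taylor identity $2\,\mathrm{Re}(F-r_\epsilon)=-r_\epsilon(\zeta)-r_\epsilon(z)+L_\zeta(\zeta-z,\zeta-z)+O(|\zeta-z|^3)$ combined with strict plurisubharmonicity handles the region $\varphi\equiv 1$, and the remaining regions are trivial since $\rho^2$ is bounded below there. Your observation that all derivatives of $r_\epsilon$ are $\epsilon$-independent is exactly the point the paper stresses and is what makes the constants uniform. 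One small remark: the restriction to $S_\delta\times D_{-\delta}$ in the hypothesis is there to keep $\zeta$ in the collar where the Levi form of $r$ is uniformly positive; you implicitly use this when you invoke $L_\zeta(\zeta-z,\zeta-z)\gtrsim|\zeta-z|^2$, so it would be worth saying so explicitly.
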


  We at times have to be precise and keep track of factors
 of $\gamma$ which occur in our integral kernels.  We shall write
$\lre_{j,k}(\zeta,z)$ for those double forms on open sets
$U\subset D\times D$ such that $\lre_{j,k}$ is smooth on $U$ and
satisfies
\begin{equation}
 \label{defnxi}
\lre_{j,k}(\zeta,z)\lesssim \xi_k(\zeta) |\zeta-z|^j,
\end{equation}
 where
 $\xi_k$ is a smooth function in $D$ with the property
\begin{equation*}
|\gamma^{\alpha}D_{\alpha}\xi_k|\lesssim \gamma^k,
\end{equation*}
for $D_{\alpha}$ a differential operator of order $\alpha$.

 We shall write
 $\lre_j$ for those double forms on open sets
$U\subset D\times D$ such that $\lre_{j}$ is smooth on $U$, can be
extended smoothly to $\overline{D}\times\overline{D}$, and
satisfies
\begin{equation*}
\lre_{j}(\zeta,z)\lesssim  |\zeta-z|^j.
\end{equation*}
$\lre_{j,k}^{\ast}$ will
 denote
 forms which can be written as $\lre_{j,k}(z,\zeta)$.

For $N\ge 0$, we let $R_N$ denote an $N$-fold product, or a sum of
such products, of first derivatives of $r(z)$, with the notation
$R_0=1$.

 Here
\begin{equation*}
P_{\epsilon}(\zeta,z)=\rho^2(\zeta,z)+
\frac{r_{\epsilon}(\zeta)}{\gamma(\zeta)}\frac{r_{\epsilon}(z)}{\gamma(z)}.
\end{equation*}

\begin{defn} A double differential form $\lra^{\epsilon}(\zeta,z)$ on
$\overline{D}_{\epsilon}\times\overline{D}_{\epsilon}$ is an
\textit{admissible} kernel, if it has the following properties:
\begin{enumerate}
\item[i)] $\lra^{\epsilon}$ is smooth on
$\overline{D}_{\epsilon}\times\overline{D}_{\epsilon}-\Lambda_{\epsilon}$
 \item[ii)] For each point $(\zeta_0,\zeta_0)\in \Lambda_{\epsilon}$ there is
 a neighborhood $U\times U$ of $(\zeta_0,\zeta_0)$ on which $\lra^{\epsilon}$ or $\overline{\lra}^{\epsilon}$
 has the representation
 \begin{equation}
 \label{typerep}
  R_NR_M^* \lre_{j,\alpha}\lre_{k,\beta}^{\ast}
  P^{-t_0}_{\epsilon}\phi^{t_1}_{\epsilon}\overline{\phi}^{t_2}_{\epsilon}
   \phi^{\ast t_3}_{\epsilon}\overline{\phi}^{\ast t_4}_{\epsilon} r^l_{\epsilon} r^{\ast
   m}_{\epsilon}
 \end{equation}
with $N,M,\alpha,\beta,j,k, t_0, \ldots, m$ integers and $j,k,
t_0, l, m \ge 0$,
 $-t=t_1+\cdots+t_4\le 0$, $N, M\ge
0$, and $N+\alpha, M+\beta\ge 0$.
\end{enumerate}
The above representation is of \textit{smooth type} $s$ for
\begin{equation*}
s=2n+j+\min\{2, t-l-m\} -2(t_0+t-l-m).
\end{equation*}
We define the \textit{type} of $\lra^{\epsilon}(\zeta,z)$ to be
\begin{equation*}
\tau=s-\max \{ 0,2-N-M-\alpha-\beta\}.
\end{equation*}
  $\lra^{\epsilon}$ has \textit{smooth
type} $\ge s$ if at each point $(\zeta_0,\zeta_0)$ there is a
representation (\ref{typerep}) of smooth type $\ge s$.
$\lra^{\epsilon}$ has \textit{type} $\ge \tau$ if at each point
$(\zeta_0,\zeta_0)$ there is a representation (\ref{typerep}) of
type $\ge \tau$.  We shall also refer to the \textit{double type}
of an operator $(\tau,s)$ if the operator is of type $\tau$ and of
smooth type $s$.
\end{defn}
The definition of smooth type above is taken from \cite{LR86}.
Here and below $(r_{\epsilon}(x))^{\ast }=r_{\epsilon}(y)$, the
$\ast$ having a similar meaning for other functions of one
variable.

Let $\lra_{j}^{\epsilon}$ be kernels of type $j$.  We denote by
$\lra_j$ the pointwise limit as $\epsilon\rightarrow 0$ of
$\lra_j^{\epsilon}$ and define the double type of $\lra_j$ to be
the double type of the $\lra_j^{\epsilon}$ of which it is a limit.
We also denote by $A_j^{\epsilon}$ to be operators with kernels of
the form $\lra_j^{\epsilon}$.  $A_j$ will denote the operators
with kernels $\lra_j$.  We use the notation
$\lra_{(j,k)}^{\epsilon}$ (resp. $\lra_{(j,k)}$) to denote kernels
of double type $(j,k)$.

 We let $\lre_{j-2n}^i(\zeta,z)$ be a
kernel of the form
\begin{equation*}
\lre_{j-2n}^i(\zeta,z)=
 \frac{\lre_{m,0}(\zeta,z)}{\rho^{2k}(\zeta,z)} \qquad j\ge 1,
\end{equation*}
where $m-2k\ge j-2n$. We denote by $E_{j-2n}$ the corresponding
isotropic operator.

 From \cite{Eh09a}, we have
\begin{thrm}
\label{basicintrep}
 For $f\in L^2_{(0,q)}(D)\cap Dom(\mdbar)\cap Dom(\mdbar^{\ast})$,
 there exist integral operators $T_q$, $S_q$, and $P_q$ such that
\begin{equation*}
\gamma(z)^3 f(z)=\gamma^{\ast} T_q \mdbar \left(\gamma^2 f\right)
+\gamma^{\ast} S_q\mdbar^{\ast}\left(\gamma^2 f\right)
+\gamma^{\ast} P_q\left(\gamma^2 f\right).
\end{equation*}
$T_q$, $S_q$, and $P_q$ have the form
\begin{align*}
&T_q=E_{1-2n}+A_1\\
&S_q=E_{1-2n}+A_1\\
&P_q=\frac{1}{\gamma}A_{(-1,1)}^{\epsilon}+
\frac{1}{\gamma^{\ast}}A_{(-1,1)}^{\epsilon}
\end{align*}
\end{thrm}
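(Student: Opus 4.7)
The plan is to derive Theorem \ref{basicintrep} as a reformulation of the representation established in \cite{Eh09a} and recalled here as Theorem \ref{intrthrm}, trading the right-hand side terms $\mdbar f$ and $\mdbar^{\ast} f$ for $\mdbar(\gamma^2 f)$ and $\mdbar^{\ast}(\gamma^2 f)$, with the resulting Leibniz correction absorbed into the remainder operator $P_q$.

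First, I would recall the construction underlying Theorem \ref{intrthrm}. On each approximating smooth strictly pseudoconvex domain $D_\epsilon$, the Koppelman formula with respect to the generating form built from the Levi polynomial $F$ and the patched function $\phi_\epsilon$ yields an integral representation for $(0,q)$-forms; the adjoint Koppelman formula, obtained by transposing against the Levi metric, produces the piece involving $\mdbar^{\ast} f$. The factor $\gamma^3$ on the left comes from working with the non-orthonormal $(1,0)$-frame determined by $\partial r = \gamma\,\omega^n$ and its effect on the metric dual entering $\mdbar^{\ast}$. Letting $\epsilon \to 0$ and reorganizing the result by singular behavior along the diagonal $\Lambda$ gives Theorem \ref{intrthrm}; the two principal integral operators are admissible in the sense of Section \ref{adms}, and the error terms split into isotropic $E$-type operators and admissible operators of low type.

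Second, the transition to $\gamma^2 f$ as the argument rests on the Leibniz identity
\begin{equation*}
\gamma^2\,\mdbar f = \mdbar(\gamma^2 f) - \mdbar(\gamma^2)\wedge f,
\end{equation*}
and an analogous formula for $\gamma^2\,\mdbar^{\ast} f$ in which the correction is an interior product with $\mdbar \gamma^2$. Pulling the factor $\gamma^2$ inside each kernel under the integral (equivalently, multiplying the representation of Theorem \ref{intrthrm} by $\gamma^{\ast 2}$ and grouping it with the kernels applied to $f$, $\mdbar f$ and $\mdbar^{\ast}f$) converts the principal terms into $\gamma^{\ast}T_q\,\mdbar(\gamma^2 f)$ and $\gamma^{\ast}S_q\,\mdbar^{\ast}(\gamma^2 f)$, while the Leibniz corrections, recombined with the original error terms of Theorem \ref{intrthrm}, form $\gamma^{\ast}P_q(\gamma^2 f)$.

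The main obstacle is matching the three resulting operators to the prescribed admissible types: $E_{1-2n}+A_1$ for $T_q$ and $S_q$, and $\frac{1}{\gamma}A_{(-1,1)}^{\epsilon}+\frac{1}{\gamma^{\ast}}A_{(-1,1)}^{\epsilon}$ for $P_q$. This requires explicit inspection of the Koppelman kernel: after separating powers of $\phi_\epsilon$, $P_\epsilon$ and $\rho^2$ and comparing the exponents $(t_0,t_1,\ldots,t_4)$ and indices $(j,k)$ appearing in the representation \eqref{typerep} against the definitions of smooth type $s$ and type $\tau$, the kernel splits into an isotropic $E_{1-2n}$ piece plus an admissible $A_1$ piece. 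The ratio $\mdbar(\gamma^2)/\gamma^2$ carries a negative power of $\gamma$, which produces exactly the $1/\gamma$ or $1/\gamma^{\ast}$ prefactor in $P_q$; verifying that the remaining kernel lies in the double-type class $(-1,1)$ is where the bookkeeping is heaviest, since each factor of $\gamma$ introduced by the Levi-metric conversion must be counted either in the $\xi_k$-component of $\lre_{j,k}$ or in the external $1/\gamma$ prefactor.
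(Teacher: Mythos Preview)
The paper does not prove Theorem \ref{basicintrep}; it is quoted directly from \cite{Eh09a} (note the line ``From \cite{Eh09a}, we have'' immediately preceding the statement). Theorem \ref{intrthrm} in the introduction is an informal announcement of the \emph{same} result from the same reference, not an independent theorem from which Theorem \ref{basicintrep} can be deduced. Your plan to derive \ref{basicintrep} from \ref{intrthrm} by a Leibniz manipulation is therefore circular: the unspecified ``error terms'' in \ref{intrthrm} and the unspecified structure of the operators $\tilde{T}_q$ are exactly what Theorem \ref{basicintrep} makes precise, and you cannot recover that information without carrying out the construction in \cite{Eh09a} itself.

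Your second paragraph also contains a variable-tracking slip. You propose to ``multiply the representation of Theorem \ref{intrthrm} by $\gamma^{\ast 2}$,'' but both formulas already share the same left-hand side $\gamma(z)^3 f(z)$, so no overall multiplication is called for. The factor $\gamma^2$ in $\mdbar(\gamma^2 f)$ lives in the integration variable $\zeta$, and passing between $\tilde{T}_q(\mdbar f)$ and $\gamma^{\ast}T_q\,\mdbar(\gamma^2 f)$ amounts to dividing the kernel of $\tilde{T}_q$ by $\gamma(\zeta)^2$ and stripping off one $\gamma(z)$. That is only legitimate once you already know, from the explicit Koppelman kernels, that the requisite powers of $\gamma$ are present---which is precisely the content being asserted. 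Your first and third paragraphs do correctly identify the ingredients of the underlying argument (Koppelman formula on the approximating domains $D_\epsilon$, limit $\epsilon\to 0$, type-counting against the representation \eqref{typerep}), but this is a sketch of what \cite{Eh09a} must contain, not a derivation available within the present paper.
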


\section{Estimates}
\label{est}

 We begin with estimates on the kernels of a certain type.  In
 \cite{Eh09a}, we proved the
\begin{prop}
 \label{yngcor}
Let $A_j^{}$ be an operator of type j. Then
\begin{equation*}
A_j^{}:L^p(D)\rightarrow L^s(D) \quad
\frac{1}{s}>\frac{1}{p}-\frac{j}{2n+2}.
\end{equation*}
\end{prop}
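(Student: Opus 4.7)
The plan is to reduce $L^p$--$L^s$ boundedness to integrability of the kernel in each variable separately and then apply a generalized Young inequality.

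First I would invoke Young's inequality for integral operators in the form: if $K(\zeta,z)$ is the kernel of $T$ and, for some $q \in [1,\infty]$,
\begin{equation*}
\sup_{\zeta \in D}\|K(\zeta,\cdot)\|_{L^q(D)} + \sup_{z \in D}\|K(\cdot,z)\|_{L^q(D)} < \infty,
\end{equation*}
then $T : L^p(D) \to L^s(D)$ whenever $1 + \tfrac{1}{s} = \tfrac{1}{p} + \tfrac{1}{q}$. The target condition $\tfrac{1}{s} > \tfrac{1}{p} - \tfrac{j}{2n+2}$ is equivalent to showing uniform $L^q$-integrability of $\lra_j$ in each variable for all $q < \tfrac{2n+2}{2n+2-j}$. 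It thus suffices to establish the latter. By symmetry of the representation (\ref{typerep}) under the $(\zeta,z)$-interchange built into the starred factors, I need only treat one of the two integrals.

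Next, using the local representation (\ref{typerep}) and Lemma \ref{phiest}, I would estimate $|\lra_j(\zeta,z)|$ pointwise by
\begin{equation*}
|R_N R_M^*|\, |\lre_{j_0,\alpha}|\, |\lre_{k_0,\beta}^*|\, \rho^{-2t_0} \bigl(|\langle \partial r(z), \zeta-z\rangle| + \rho^2(\zeta,z)\bigr)^{-t} |r|^l |r^*|^m,
\end{equation*}
where $t = t_1 + \cdots + t_4$, and then pass to the $\epsilon \to 0$ limit (which is harmless since all constants are $\epsilon$-independent). The key technical input is the family of Henkin--Leiterer-type integrals
\begin{equation*}
\int_D \rho(\zeta,z)^{-a} \bigl(|\langle \partial r(z), \zeta-z\rangle| + \rho^2(\zeta,z)\bigr)^{-b}\, dV(z),
\end{equation*}
which are finite provided $a + 2b < 2n+2$; the denominator $2n+2$ reflects the non-isotropic Heisenberg geometry in the complex normal direction, with the extra $+2$ coming from the complex normal coordinate. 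Raising the kernel to the $q$-th power and applying this estimate with exponents dictated by $s = 2n + j_0 + \min\{2, t-l-m\} - 2(t_0 + t - l - m)$ reduces the integrability condition to $q(2n+2-j) < 2n+2$.

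The main obstacle is the bookkeeping of the exponent count, and in particular the two cases distinguished in the definition of type by the sign of $N+M+\alpha+\beta - 2$. When $N + M + \alpha + \beta \ge 2$, the factors $R_N R_M^*$ together with the $\lre_{j_0,\alpha}$, $\lre_{k_0,\beta}^*$ supply sufficient decay in $|\zeta - z|$ on their own and $\tau = s$; when $N + M + \alpha + \beta < 2$, the deficit $2 - N - M - \alpha - \beta$ reduces $\tau$ below $s$, and must be compensated by the $\min\{2, t-l-m\}$ contribution in $s$, which effectively forces one to lose up to two powers of the Henkin--Leiterer quantity when extracting integrability. Verifying that the case analysis lines up exactly so that, in either case, the kernel integral converges for $q < \tfrac{2n+2}{2n+2-j}$ is the only non-routine step; once this is done, Young's inequality finishes the proof.
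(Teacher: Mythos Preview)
The paper does not prove this proposition here; it is quoted from \cite{Eh09a}, so there is no in-text argument to compare against. Your route via the generalized Young inequality together with uniform $L^q$-integrability of the kernel is the standard one (it is how the analogous result is established in the smooth case in \cite{LR86, LiMi}) and is almost certainly what \cite{Eh09a} does as well.

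One point deserves care. Your key integral estimate
\[
\int_D \rho^{-a}\bigl(|\langle\partial r(z),\zeta-z\rangle|+\rho^2\bigr)^{-b}\,dV < \infty \quad\text{for } a+2b<2n+2
\]
is written as if $|\partial r|$ were bounded away from zero, but on a Henkin--Leiterer domain $\gamma=|\partial r|$ vanishes at the Morse critical points, and the anisotropic direction degenerates there. The way this is handled throughout the present paper (see the proofs of Theorem \ref{dertype1} and Lemma \ref{weightedz}) is to split the integration domain into $E_c(z)=\{|\zeta-z|<c\gamma(z)\}$, where Schmalz's coordinates from Lemma \ref{schmalzlem} make the usual anisotropic estimate valid with a harmless $\gamma(z)^{-2}$ Jacobian factor, and the complement, where one uses Morse coordinates (\ref{rcoor}) near each critical point and the isotropic bound $|\phi|\gtrsim\rho^2$. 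The type conditions $N+\alpha\ge 0$, $M+\beta\ge 0$ ensure that the numerator factors $R_N\lre_{j,\alpha}$, $R_M^*\lre_{k,\beta}^*$ stay bounded through the critical set, so the additional work is confined to the denominator. Once you build this splitting into your argument, the exponent bookkeeping you outline goes through and Young's inequality finishes the proof.
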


 We describe what we shall call
tangential derivatives on the Henkin-Leiterer domain, $D$.  A
non-vanishing vector field, $T$, in $\mathbb{R}^{2n}$ will be
called tangential if $Tr=0$ on $r=0$.  Near a boundary point, we
choose a coordinate patch on which we have an orthogonal frame
$\omega^1,\ldots, \omega^n$ of $(1,0)$-forms with $\partial
r=\gamma\omega^n$.  Let $L_1,\ldots,L_n$ denote the dual frame.
$L_1,\ldots, L_{n-1}$, $\overline{L}_1,\ldots,\overline{L}_{n-1}$,
and $Y=L_n-\overline{L}_n$ are tangential vector fields.
$N=L_n+\overline{L}_n$ is a normal vector field.  We say a given
vector field $X$ is a smooth tangential vector field if it is a
tangential field and if near each boundary point $X$ is a
combination of such vector fields $L_1,\ldots, L_{n-1}$,
$\overline{L}_1,\ldots,\overline{L}_{n-1},Y$, and $rN$ with
coefficients in $C^{\infty}(\overline{D})$.  We make the important
remark here that in the coordinate patch of a critical point, the
smooth tangential vector fields are not smooth combinations of
derivatives with respect to the coordinate system described in
Lemma \ref{c1diff}.  In fact, they are combinations of derivatives
with respect to the coordinates of Lemma \ref{c1diff} with
coefficients only in $C^0(\overline{D})$ due to factors of
$\gamma$ which occur in the denominators of such coefficients.  In
general a $k^{th}$ order derivative of such coefficients is in
$\lre_{0,-k}$. Thus, when integrating by parts, special attention
has to be paid to these non-smooth terms.
\begin{defn}  We say an operator with kernel, $\lra$, is of
\textit{commutator} type $j$ if $\lra$ is of type $j$, and if in
the representation of $\lra$ in (\ref{typerep}) we have $t_1t_3\ge
0$, $t_2t_4\ge0$, and $(t_1+t_3)(t_2+t_4)\le0$.
\end{defn}
\begin{defn} Let $W$ be a smooth tangential vector field
on $\overline{D}$.  We call $W$ \textit{allowable} if for all
$\zeta\in\partial D$
\begin{equation*}
W^{\zeta}\in T_{\zeta}^{1,0}(\partial D)\oplus
T_{\zeta}^{0,1}(\partial D).
\end{equation*}
\end{defn}

 The following theorem is
obtained by a modification of Theorem 2.20 in \cite{LR86_2} (see
also \cite{LiMi}).  The new details which come about from the fact
that here we do not assume $|\partial r|\ne 0$ require careful
consideration and so we go through the calculations below.
\begin{thrm}
\label{commutator}
 Let $A_1$ be an admissible operator of
 commutator type $\ge 1$ and $X$ a smooth tangential vector
field. Then
\begin{equation*}
\gamma^{\ast}
 X^{z}A_1 =-A_1\tilde{X}^{\zeta}\gamma+A_1^{(0)} +\sum_{\nu=1}^l
A_1^{(\nu)}  W_{\nu}^{\zeta}\gamma ,
\end{equation*}
where $\tilde{X}$ is the adjoint of $X$, the $W_{\nu}$ are
allowable vector fields, and the $A_j^{(\nu)}$ are admissible
operators of commutator type $\ge j$.
\end{thrm}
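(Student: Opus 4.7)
The plan is to adapt the proof of the analogous commutator identity on smooth strictly pseudoconvex domains (Lieb--Range, Theorem 2.20 of \cite{LR86_2}) by working with the smoothed kernels $\lra_1^\epsilon$ and keeping meticulous track of the factors of $\gamma(\zeta)$ and $\gamma(z)$ that enter through the frame in which $\partial r=\gamma\omega^n$; the final identity then follows by taking the pointwise limit $\epsilon\downarrow 0$.

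The first step is to expand $X^z\lra_1^\epsilon$ using the representation (\ref{typerep}). The factors depending only on $\zeta$ are annihilated, and the pieces depending only on $z$ ($R_M^\ast$, $\lre_{k,\beta}^\ast$, $r_\epsilon^{\ast m}$, $\phi_\epsilon^{\ast t_3}$, $\overline\phi_\epsilon^{\ast t_4}$) produce new admissible kernels of the same smooth type by direct inspection, and tangentiality of $X$ turns $X^zr_\epsilon^\ast$ into $r_\epsilon^\ast\cdot(\text{smooth})$. The essential computation is $X^z$ on the shared factor $\phi_\epsilon^{t_1}\overline\phi_\epsilon^{t_2}$ and on $P_\epsilon^{-t_0}$. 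Here I would use that $F_\epsilon(\zeta,z)$ depends on $z$ only polynomially through $\zeta-z$ with $\zeta$-dependent coefficients to derive the pointwise identity $X^z\phi_\epsilon = -X^\zeta\phi_\epsilon + X^\zeta r_\epsilon + (\text{type }\ge 1 \text{ remainder})$, and similarly for $P_\epsilon$; the commutator hypothesis $t_1t_3\ge 0$, $t_2t_4\ge 0$, $(t_1+t_3)(t_2+t_4)\le 0$ is exactly what ensures that after differentiation the product $\phi_\epsilon^{t_1}\overline\phi_\epsilon^{t_2}\phi_\epsilon^{\ast t_3}\overline\phi_\epsilon^{\ast t_4}$ is still an admissible kernel of commutator type.

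Next I would integrate $-X^\zeta\lra_1^\epsilon$ by parts in $\zeta$, producing $\lra_1^\epsilon\cdot\tilde X^\zeta f$ plus a boundary integral over $\partial D_\epsilon$. Writing $X=\sum a_iL_i+b_i\overline L_i+cY+d\, rN$ with smooth coefficients, the components lying in $T^{1,0}(\partial D)\oplus T^{0,1}(\partial D)$ are the allowable fields $W_\nu$ and their boundary contribution vanishes by Stokes on $\partial D_\epsilon$, while the $rN$ component contributes a factor $r_\epsilon$ that lifts the resulting kernels to commutator type $\ge 1$; the $X^\zeta r_\epsilon$ term from the previous step does the same thing. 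These surviving pieces are regrouped into $A_1^{(0)}$ and the $A_1^{(\nu)}W_\nu^\zeta\gamma$ on the right-hand side.

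The step I expect to be the main obstacle is the $\gamma$-accounting. The frame $\{L_i\}$ dual to $\omega^1,\ldots,\omega^n$ has $1/\gamma$ singularities at critical points of $r$, and each derivative of $\gamma$ produces a $\gamma^{-1}$ together with a second-order derivative of $r$ that must be placed into an $\xi_k$ class as in (\ref{defnxi}). At each stage I must verify that the total powers of $\gamma$ and $\gamma^\ast$ keep the new kernels inside the commutator-type class: this is exactly what forces the factor $\gamma^\ast$ on the left and the factors $\gamma$ next to $\tilde X^\zeta$ and $W_\nu^\zeta$ on the right, since those multiplications absorb all the bad negative powers of $\gamma$ and leave honest, smooth first-order operators behind. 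Once this bookkeeping is carried out term by term and the $\epsilon\to 0$ limit is taken, the identity follows.
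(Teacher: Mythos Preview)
Your outline has a real gap at exactly the point where the allowable fields $W_\nu$ enter, and it stems from the claim that $X^z\phi_\epsilon=-X^\zeta\phi_\epsilon+X^\zeta r_\epsilon+(\text{type }\ge 1\text{ remainder})$ feeds into a commutator-type $\ge 1$ kernel. It does not. Using the relation $(\gamma X^\zeta+\gamma^\ast X^z)\phi=\lre_{1,1}+\lre_{2,0}$, the piece $\lre_{1,1}$, when multiplied by the extra $\phi^{-1}$ coming from differentiating $\phi^{t_1}$, produces a kernel of double type $(0,0)$ --- this is the term the paper calls $B$. One factor of $\gamma$ and one factor of $|\zeta-z|$ are not enough to compensate for the loss of a full power of $\phi$ in the type count, so your ``type $\ge 1$ remainder'' is in fact only type $0$ in the worst case.

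The paper repairs this by an argument you have not described: one shows $B_\sigma=\gamma Y\lra_{(1,2)}+\lra_1'$ for suitable pieces $B_\sigma$ of $B$, and then invokes strict pseudoconvexity to write $Y=\varphi[W_1,W_2]+W_3$ with $W_1,W_2,W_3$ allowable. A second integration by parts throws one of the $W_i$ onto $f$, and \emph{this} is where the terms $A_1^{(\nu)}W_\nu^\zeta\gamma$ come from. They are not, as you suggest, the $T^{1,0}\oplus T^{0,1}$ components of $X$ arising from a boundary integral; indeed, no boundary term survives the first integration by parts because $X$ is tangential. Your proposal also does not explain why the $aY$ component of $X$ (which is tangential but not allowable) is harmless --- it is harmless only after this same commutator trick. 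Without the $Y=\varphi[W_1,W_2]+W_3$ step and the second integration by parts, the argument cannot close.
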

\begin{proof}
We use a partition of unity and suppose $X$ has arbitrarily small
support on a coordinate patch near a boundary point in which we
have an orthogonal frame $\omega^1, \ldots, \omega^n$ of
$(1,0)$-forms with $\partial r=\gamma\omega^n$, as described above
with $L_1, \dots, L_n$ comprising the dual frame. We have
$L_1,\ldots, L_{n-1}$, $\overline{L}_1,\ldots,\overline{L}_{n-1}$,
and $Y=L_n-\overline{L}_n$ as tangential vector fields, and
$N=L_n+\overline{L}_n$ a normal vector field.

We have the decomposition of the tangential vector field $X$
\begin{equation*}
X=\sum_{j=0}^{n-1}a_j L_j + \sum_{j=0}^{n-1}b_j \overline{L}_j
 +aY+brN,
\end{equation*}
where the $a_j$, $b_j$, $a$, and $b$ are smooth with compact
support.  We then prove the theorem for each term in the
decomposition.
\ \\
$Case\ 1)$.  $X=a_jL_j$ or $b_j\overline{L}_j$, $j\le n-1$, or
$aY$.

We write
\begin{equation*}
\gamma^{\ast}X^{z}\lra_{1} = -\gamma X^{\zeta}\lra_{1}+
 (\gamma X^{\zeta}+\gamma^{\ast} X^{z})\lra_{1}.
\end{equation*}
Then an integration by parts gives
\begin{equation*}
\gamma^{\ast}X^zA_1  f=
 -A_1(\widetilde{X}^{\zeta}\gamma f)+(f,(\gamma X^{\zeta}+\gamma^{\ast} X^z)\lra_1).
\end{equation*}

We now use the following relations
\begin{align}
 \label{allreln}
(\gamma X^{\zeta}+\gamma^{\ast}X^z)\lre_{j,\alpha}&=\lre_{j,\alpha}\\
\nonumber
(\gamma X^{\zeta}+\gamma^{\ast}X^z)\lre_{j,\beta}^{\ast}&=\lre_{j,\beta}^{\ast}\\
\nonumber
 (\gamma X^{\zeta}+\gamma^{\ast}X^z)P&= \lre_{2,0}
+\frac{rr^{\ast}}{\gamma\gamma^{\ast}}\lre_{0,0}
 \\
  \nonumber
&=\lre_{0,0} P+\lre_{2,0}\\
\nonumber
 (\gamma X^{\zeta}+\gamma^{\ast}X^z)\phi &=\lre_{1,1}+\lre_{2,0}.
\end{align}

Any type 1 kernel
\begin{equation}
\label{regtype}
 \lra_1(\zeta,z)=R_NR_M^{\ast}\lre_{j,\alpha}\lre_{k,\beta}^{\ast}
 P^{-t_0}\phi^{t_1}
 \overline{\phi}^{t_2}
   \phi^{\ast t_3}\overline{\phi}^{\ast t_4} r^l r^{\ast
   m}
\end{equation}
can be decomposed into terms
\begin{equation*}
\lra_1=\lra_1'+\lra_2
\end{equation*}
where $\lra_1'$ is of \textit{pure} type, meaning it has a
representation as in (\ref{regtype}) but with $t_3=t_4=0$ and
$t_1t_2\le0$, \cite{LR86_2}.
 From the relations (\ref{allreln}) we have
\begin{equation*}
(\gamma X^{\zeta}+\gamma^{\ast}X^z)\lra_2 = \gamma\lra_1+\lra_2.
\end{equation*}

In calculating $ (\gamma X^{\zeta}+\gamma^{\ast} X^z)\lra_1'$, we
find the term that is not immediately seen to be of type $\lra_1$
is that which results from the operator $\gamma
X^{\zeta}+\gamma^{\ast}X^z$ falling on $\phi^{t_1}$, in which case
we obtain the term of double type $(0,0)$
\begin{equation*}
B:= R_NR_M^{\ast}\lre_{j+1,\alpha+1}\lre_{k,\beta}^{\ast}
 P^{-t_0}\phi^{t_1-1}\overline{\phi}^{t_2}r^l
r^{\ast m},
\end{equation*}
where $N+\alpha\ge2$, plus a term which is $\lra_1$.
 We follow \cite{LiMi} to reduce to the case where $B$ can be
written as a sum of terms $B_{\sigma}$ such that $B_{\sigma}$ or
$\overline{B}_{\sigma}$ is of the form
\begin{equation*}
\gamma^2\phi^{\sigma}(\phi+\overline{\phi})^{\tau_1+\tau_2-\sigma}
 R_NR_M^{\ast}\lre_{j+1,\alpha-1}\lre_{k,\beta}^{\ast}P^{-t_0}
r^lr^{\ast m},
\end{equation*}
where $\tau_1+\tau_2\le-3$ and
 $\tau_1\le \sigma\le \tau_1+\tau_2$ or $\tau_2\le \sigma \le
 \tau_1+\tau_2$.

We fix a point $z$ and choose local coordinates $\zeta$ such that
\begin{equation*}
 d\zeta_j(z)=\omega_j(z).
\end{equation*}
 Working in a neighborhood of a singularity in the boundary
(where we can use a coordinate system as in (\ref{rcoor}) below),
we see
 $\frac{\partial}{\partial \zeta_n}$ is a combination of
 derivatives with coefficients of the form $\xi_0(z)$, while
 $L_n$ is a combination of
 derivatives with coefficients of the form $\xi_0(\zeta)$, where
 $\xi_0$ is defined in (\ref{defnxi}).
  We have $\Lambda_n -\frac{\partial}{\partial z_n}$ is a sum of
  terms of the form
\begin{equation*}
(\xi_0(z)-\xi_0(\zeta))\Lambda^{\epsilon} =
\lre_{1,-1}\Lambda^{\epsilon},
\end{equation*}
where $\Lambda$ is a first order differential operator, and the
equality follows from
\begin{align*}
\frac{1}{\gamma(\zeta)}-\frac{1}{\gamma(z)}&
 =\frac{\gamma(z)-\gamma(\zeta)}{\gamma(\zeta)\gamma(z)}\\
&=\frac{1}{\gamma(z)}\frac{\gamma^2(z)-\gamma^2(\zeta)}
 {\gamma(\zeta)(\gamma(\zeta)+\gamma(z))}\\
&=\frac{1}{\gamma(z)}\frac{\xi_1(\zeta)\lre_1}
 {\gamma(\zeta)(\gamma(\zeta)+\gamma(z))}\\
&=\frac{1}{\gamma(z)}\frac{\lre_{1,0}}
 {(\gamma(\zeta)+\gamma(z))}\\
&\lesssim \frac{1}{\gamma(z)}\frac{\lre_{1,0}}
 {\gamma(z)}\\
&=\lre_{1,-2}.
\end{align*}
Using these special coordinates, we note
\begin{align*}
Y\phi&=\gamma+\lre_{1,0}+\lre_{2,-1}\\
Y\overline{\phi}&=-\gamma+\lre_{1,0}+\lre_{2,-1}\\
YP&= \lre_{1,0}+\frac{\lre_{0,0}}{\gamma}\left(P+\lre_{2,0}\right)
\end{align*}
 and write
\begin{align*}
B_{\sigma}=&\gamma^2\phi^{\sigma}(\phi+\overline{\phi})^{\tau_1+\tau_2-\sigma}R_NR_M^{\ast}
 \lre_{j+1,\alpha-1}\lre_{k,\beta}^{\ast}P^{-t_0}
r^lr^{\ast m}\\
=&\gamma
Y\left(\phi^{\sigma+1}(\phi+\overline{\phi})^{\tau_1+\tau_2-\sigma}R_NR_M^{\ast}
 \lre_{j+1,\alpha-1}\lre_{k,\beta}^{\ast}P^{-t_0}
r^lr^{\ast m}\right)\\
&+\gamma\phi^{\sigma}(\phi+\overline{\phi})^{\tau_1+\tau_2-\sigma}R_NR_M^{\ast}
 \lre_{j+2,\alpha-1}\lre_{k,\beta}^{\ast}P^{-t_0}
r^lr^{\ast m}\\
&+\gamma\phi^{\sigma}(\phi+\overline{\phi})^{\tau_1+\tau_2-\sigma}R_NR_M^{\ast}
 \lre_{j+3,\alpha-2}\lre_{k,\beta}^{\ast}P^{-t_0}
r^lr^{\ast m}\\
&+\gamma\phi^{\sigma+1}(\phi+\overline{\phi})^{\tau_1+\tau_2-\sigma-1}R_NR_M^{\ast}
 \lre_{j+2,\alpha-1}\lre_{k,\beta}^{\ast}P^{-t_0}
r^lr^{\ast m}\\
&+\gamma\phi^{\sigma+1}(\phi+\overline{\phi})^{\tau_1+\tau_2-\sigma-1}R_NR_M^{\ast}
 \lre_{j+3,\alpha-2}\lre_{k,\beta}^{\ast}P^{-t_0}
r^lr^{\ast m}\\
&+\gamma\phi^{\sigma+1}(\phi+\overline{\phi})^{\tau_1+\tau_2-\sigma}R_{N-1}R_M^{\ast}
 \lre_{j+1,\alpha-1}\lre_{k,\beta}^{\ast}P^{-t_0}
r^lr^{\ast m}\\
&+\gamma\phi^{\sigma+1}(\phi+\overline{\phi})^{\tau_1+\tau_2-\sigma}R_{N}R_M^{\ast}
 \lre_{j,\alpha-1}\lre_{k,\beta}^{\ast}P^{-t_0}
r^lr^{\ast m}\\
&+\gamma\phi^{\sigma+1}(\phi+\overline{\phi})^{\tau_1+\tau_2-\sigma}R_{N}R_M^{\ast}
 \lre_{j+2,\alpha-1}\lre_{k,\beta}^{\ast}P^{-t_0-1}
r^lr^{\ast m}\\
&+\phi^{\sigma+1}(\phi+\overline{\phi})^{\tau_1+\tau_2-\sigma}R_NR_M^{\ast}
 \lre_{j+1,\alpha-1}\lre_{k,\beta}^{\ast}P^{-t_0}
r^lr^{\ast m}\\
&+\phi^{\sigma+1}(\phi+\overline{\phi})^{\tau_1+\tau_2-\sigma}R_{N}R_M^{\ast}
 \lre_{j+3,\alpha-1}\lre_{k,\beta}^{\ast}P^{-t_0-1}
r^lr^{\ast m}.
\end{align*}
Thus
\begin{equation*}
B_{\sigma}=\gamma Y\lra_{(1,2)}+\lra_{1}'.
\end{equation*}

By the strict pseudoconvexity of $D$ there exists allowable vector
fields $W_1$, $W_2$, and $W_3$, and a function $\varphi$, smooth
on the interior of $D$ which satisfies
\begin{equation*}
\Phi^k\varphi= \lre_{0,1-k},
\end{equation*}
where $\Phi$ is a first order differential operator,
 such that $Y$ can be
written
\begin{equation*}
Y= \varphi[W_1,W_2]+W_3.
\end{equation*}
Thus
\begin{align*}
\gamma Y\lra_{(2,1)}&=
 \gamma\varphi[W_1,W_2]\lra_{(1,2)}+\gamma W_3\lra_{(1,2)}\\
 &=\gamma
 [W_1,W_2]\varphi\lra_{(1,2)}+\lra_{1}'
\end{align*}
with $\lra_1'$ of commutator type $\ge 1$.

An integration by parts gives
\begin{equation*}
 (f,\gamma[W_1,W_2](\varphi\lra_2))=
  (\widetilde{W}_1\gamma f,W_2(\varphi\lra_{(1,2)}))-
  (\widetilde{W}_2\gamma f,W_1(\varphi\lra_{(1,2)})).
 \end{equation*}
$\widetilde{W}_1$ and $\widetilde{W}_2$ are allowable vector
fields while $W_2(\varphi\lra_{(1,2)})$ and
$W_1(\varphi\lra_{(1,2)})$ are of the form
 $\lra_{1}'$ where $\lra_1'$ is of commutator type,
 proving the theorem for
Case 1.
\\
$Case\ 2)$.  $X=\lre_0 rN$.

We use
\begin{align}
 \label{case2reln}
\left( r\gamma N^{\zeta}+r^{\ast}\gamma^{\ast}
 N^z\right)\lre_{j,\alpha}
&=\lre_{j,\alpha}\\
 \nonumber
\left( r\gamma N^{\zeta}+r^{\ast}\gamma^{\ast}
N^z\right)P&=\lre_{2,0}+
 \frac{r}{\gamma}\frac{r^{\ast}}{\gamma^{\ast}}\lre_{0,0}\\
 \nonumber
 &=\lre_{2,0}+
 P\lre_{0,0}\\
 \nonumber
\left(r\gamma N^{\zeta}+r^{\ast}\gamma^{\ast}N^z\right)\phi&=
 r\lre_{0,0}+r^{\ast}\lre_{0,0}.
\end{align}
Thus
\begin{align*}
\gamma^{\ast}XA_1 f=&
(\lre_0  r^{\ast}  f,\gamma^{\ast}N^{z}\lra_1)\\
=& (-\lre_0  r  f,\gamma N^{\zeta}\lra_1)+( f,\lre_0\left(
r\gamma N^{\zeta}+r^{\ast}\gamma^{\ast}N^z\right)\lra_1)\\
=& (-\widetilde{N}^{\zeta}(\lre_0 r \gamma
f),\lra_1)+(f,\lre_0\left( r\gamma
N^{\zeta}+r^{\ast}\gamma^{\ast}N^z\right)\lra_1).
\end{align*}
We have
\begin{equation*}
\widetilde{N}^{\zeta}(\lre_0 r \gamma f)
 = \lre_{0,0}  f+\lre_0r\widetilde{N}^{\zeta}\gamma f
\end{equation*}
and $\lre_0r\widetilde{N}^{\zeta}$ is an allowable vector field.
The relations in (\ref{case2reln}) show that
\begin{equation*}
\left( r\gamma N^{\zeta}+r^{\ast}\gamma^{\ast} N^z\right)\lra_1
\end{equation*}
 is of commutator type $\ge 1$. Case 2 therefore follows.
\end{proof}

Below we use a criterion for H\"{o}lder continuity given by
Schmalz (see Lemma 4.1 in \cite{Sc89}) which states
\begin{lemma}[Schmalz]
\label{schmlemma} Let $D\subseteq \mathbb{R}^m$, $m\ge 1$ be an
open set and let $B(D)$ denote the space of bounded functions on
$D$. Suppose $r$ is a $C^2$ function on $\mathbb{R}^m$, $m\ge 1$,
such that $D:=\{r<0\} \subseteq \mathbb{R}^m$.  Then there exists
a constant $C<\infty$ such that the following holds: If a function
$u\in B(D)$ satisfies for some $0<\alpha\le 1/2$ and for all
$z,w\in D$ the estimate
\begin{equation*}
|u(z)-u(w)|\le |z-w|^{\alpha}+\max_{y=z,w}
 \frac{|\nabla r(y)| |z-w|^{1/2+\alpha}}{|r(y)|^{1/2}}
\end{equation*}
then
\begin{equation*}
|u(z)-u(w)|\le C|z-w|^{\alpha}
\end{equation*}
for all $z,w\in D$.
\end{lemma}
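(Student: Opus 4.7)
The plan is to split the pair $(z,w)\in D\times D$ according to the depth of $z,w$ in $D$ relative to $\delta:=|z-w|$. Since $u$ is bounded, I only need to treat $\delta$ small. In the \emph{deep case}, when both $|r(z)|,|r(w)|\gtrsim \delta\cdot\|\nabla r\|_{\infty}^{2}$, the factor $|\nabla r|/|r|^{1/2}$ is controlled by $\delta^{-1/2}$, so the second term of the hypothesis is already dominated by $|z-w|^{\alpha}$ and the conclusion is immediate.

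The real work is in the \emph{boundary case}, when one of the points, say $z$, lies in the strip of width $\lesssim\delta$ about $\partial D$. Here I would use a dyadic lift along the inward normal: set $n=-\nabla r(\pi(z))/|\nabla r(\pi(z))|$, where $\pi(z)$ is the nearest boundary point, and define the chain $z_k = z + 2^{-k}\delta\, n$ for $k=0,1,2,\ldots$, so that $z_0$ sits at depth $\sim\delta$ while $z_k\to z$ as $k\to\infty$. Because $r\in C^2$ and $r(\pi(z))=0$, Taylor expansion gives $|r(z_k)|\sim |\nabla r|\cdot 2^{-k}\delta$, and the hypothesis applied to the consecutive pair $(z_k,z_{k+1})$ (separation $2^{-k-1}\delta$) yields
\[
|u(z_k)-u(z_{k+1})|\lesssim 2^{-k\alpha}\delta^{\alpha}.
\]
Summing the geometric series gives $|u(z)-u(z_0)|\lesssim \delta^{\alpha}$, and analogously $|u(w)-u(w_0)|\lesssim \delta^{\alpha}$. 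The middle pair $(z_0,w_0)$ falls into the deep case and is also bounded by $\delta^{\alpha}$, so the triangle inequality closes the argument.

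The main obstacle is the exponent bookkeeping along the dyadic chain: the $|z-w|^{1/2+\alpha}$ in the hypothesis must exactly absorb the $|r|^{-1/2}$ that grows like $2^{k/2}$ down the sequence, and this is precisely where the restriction $\alpha\le 1/2$ enters (for $\alpha>1/2$ the chain would diverge). A secondary difficulty is the degeneracy of $\pi(z)$ and $n$ near critical points of $r$; there, however, $|\nabla r|$ itself vanishes so the bad term of the hypothesis stays bounded, and one can cover $\partial D$ by a finite collection of charts, excising a fixed neighborhood of the critical set on which the boundedness of $u$ alone gives the required H\"older bound, and piece everything together with a partition of unity.
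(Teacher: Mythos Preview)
The paper does not supply its own proof of this lemma; it is quoted verbatim from Schmalz \cite{Sc89} (Lemma~4.1 there) and used as a black box throughout Section~\ref{est}. Your dyadic-chain argument is the standard Hardy--Littlewood mechanism behind such results and is, in outline, what Schmalz does: push each of $z,w$ inward along a geometric sequence of steps so that on every consecutive pair the $|r|^{-1/2}$ loss is exactly absorbed by the extra $|z-w|^{1/2}$ in the hypothesis, sum the resulting series in $2^{-k\alpha}$, and close with the triangle inequality on the deep pair $(z_0,w_0)$. Your remark that this is where the restriction $\alpha\le 1/2$ enters is also correct.

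Your handling of boundary critical points, however, contains a real gap: both assertions you make there are false. First, ``$|\nabla r|$ itself vanishes so the bad term of the hypothesis stays bounded'' is not true---near a boundary critical point both $|\nabla r(y)|$ and $|r(y)|$ tend to zero and their ratio $|\nabla r(y)|/|r(y)|^{1/2}$ is unbounded (take $r(x,y)=x^{2}-y^{2}$ and let a point of $D=\{y^{2}>x^{2}\}$ approach the line $y=x$). Second, ``boundedness of $u$ alone gives the required H\"older bound'' on an excised neighborhood is simply false for pairs $z,w$ lying in that neighborhood with $|z-w|$ arbitrarily small. The honest fix is not to excise anything: one observes that the chain is only needed when $|r(z)|<|\nabla r(z)|^{2}\delta$, so in particular $\nabla r(z)\ne 0$, and one runs the chain in the direction $-\nabla r(z)/|\nabla r(z)|$ rather than along the normal at a possibly ill-defined $\pi(z)$. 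The second-order Taylor remainder then has to be tracked carefully (it is of size $C(2^{-k}\delta)^{2}$ and can compete with the first-order gain $2^{-k}\delta\,|\nabla r(z)|$ when $|\nabla r(z)|$ is itself $O(\delta)$), so the estimate near the critical set is genuinely more delicate than your sketch suggests; this is precisely the content that Schmalz supplies and that the present paper chose to cite rather than reproduce.
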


 We will also refer to a lemma of
Schmalz (Lemma 3.2 in \cite{Sc89}) which provides a useful
coordinate system in which to prove estimates.
\begin{lemma}
 \label{schmalzlem}
 Define $x_j$ by $\zeta_j = x_j+ix_{j+n}$ for $1\le j\le n$.
Let $E_{\delta}(z):=\{\zeta\in D: |\zeta-z| < \delta \gamma(z)\}$
for $\delta>0$.  Then there is a constant $c$ and numbers $l$,
$m\in \{1,\ldots,2n\}$ such that for all $z\in D$,
 \begin{equation*}
 \{
-r(\zeta),\mbox{
Im}\phi(\cdot,z),x_1,\ldots_{\hat{l}},_{\hat{m}}\ldots,x_{2n}\},
\end{equation*}
where $x_l$ and $x_m$ are omitted, forms a coordinate system in
$E_c(z)$ .  We have the estimate
\begin{equation*}
\label{volest}
 dV \lesssim \frac{1}{\gamma(z)^2}\left|
dr(\zeta)\wedge d\mbox{ Im}\phi(\cdot,z)\wedge dx_1\wedge
\ldots_{\hat{l}},_{\hat{m}}\ldots\wedge dx_{2n}\right| \quad
\mbox{on } E_c(z),
\end{equation*}
where $dV$ is the Euclidean volume form on $\mathbb{R}^{2n}$.
\end{lemma}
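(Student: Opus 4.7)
The plan is to realise the proposed functions as a local chart via the inverse function theorem, with Jacobian determinant of absolute value $\gtrsim \gamma(z)^2$, and then read off the volume-form estimate by inverting the Jacobian.

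First I would compute the two non-standard differentials at $\zeta=z$. Near the diagonal $\phi(\zeta,z)=F(\zeta,z)-r(\zeta)$, and the Levi polynomial satisfies $F(z,z)=0$, $\partial_{\zeta}F(\zeta,z)|_{\zeta=z}=\partial r(z)$, and $\bar{\partial}_{\zeta}F(\zeta,z)|_{\zeta=z}=0$, so that
\begin{equation*}
d_{\zeta}\phi(\cdot,z)\big|_{\zeta=z}=-\bar{\partial}r(z),
\end{equation*}
and hence $d\,\mathrm{Im}\,\phi(\cdot,z)|_{\zeta=z}$ is a real multiple of $(\partial r-\bar{\partial}r)(z)$. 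Writing $\partial r(z)=\gamma(z)\omega^{n}(z)$ then shows that both $dr(z)$ and $d\,\mathrm{Im}\,\phi(\cdot,z)|_{\zeta=z}$ have Euclidean magnitude $\sim\gamma(z)$ and are $\mathbb{R}$-linearly independent, together spanning the real two-plane associated with the complex normal direction $\omega^{n}(z)$.

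Second, I would view the pair $(dr,d\,\mathrm{Im}\,\phi)|_{\zeta=z}$ as a $2\times 2n$ matrix in the $dx_{1},\ldots,dx_{2n}$ basis. Its entries have size $\gamma(z)$ and its rank is two, so by pigeonhole there exist indices $l,m$ whose $2\times 2$ minor has absolute value $\gtrsim\gamma(z)^{2}$. Choosing these as the omitted coordinates, the full $2n\times 2n$ Jacobian of the proposed chart $\{-r,\mathrm{Im}\,\phi,x_{j}:j\neq l,m\}$ equals, up to sign, this minor, and so is $\gtrsim\gamma(z)^{2}$ at $\zeta=z$. To propagate the estimate across $E_{c}(z)$, I would exploit that $|\zeta-z|<c\gamma(z)$ on $E_{c}(z)$: smoothness of $\partial r$ and of $\phi(\cdot,z)$ in $\zeta$ then perturbs the minor by $O(c\gamma(z)^{2})$, so the lower bound survives provided $c$ is small. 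Since there are only $\binom{2n}{2}$ possible pairs $(l,m)$, $c$ may be taken independent of $z$. The inverse function theorem supplies the chart on $E_{c}(z)$, and the volume estimate follows by inverting $|\det J|\gtrsim\gamma(z)^{2}$.

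The main obstacle is uniformity of the constant $c$ as $z$ approaches a critical point of $r$, where $\gamma(z)\to 0$ and $E_{c}(z)$ collapses. This will be handled by working in the $\gamma(z)$-rescaled coordinates adapted to the Morse structure of $r$: once rescaled, both $dr/\gamma(z)$ and $d\,\mathrm{Im}\,\phi(\cdot,z)/\gamma(z)$ become uniformly smooth in $\zeta\in E_{c}(z)$, so the perturbation analysis of the minor is uniform in $z$, and the same $c$ works throughout $D$.
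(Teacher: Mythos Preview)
The paper does not supply its own proof of this lemma; it is quoted from Schmalz \cite{Sc89} (Lemma~3.2 there). Your argument is correct and is essentially the standard one: from $d_{\zeta}\phi|_{\zeta=z}=-\bar\partial r(z)$ one sees that $dr(z)$ and $d\,\mathrm{Im}\,\phi(\cdot,z)|_{\zeta=z}$ are, up to constants, the real and imaginary parts of $2\partial r(z)$, each of Euclidean size $\sim\gamma(z)$ and $\mathbb{R}$-linearly independent; a pigeonhole over the $\binom{2n}{2}$ coordinate $2$-planes then produces $(l,m)$ with $2\times2$ minor $\gtrsim\gamma(z)^{2}$, and a quantitative inverse function theorem on $E_c(z)$ yields the chart and the volume estimate.

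Two small remarks. First, as written the statement places $l,m$ before the quantifier ``for all $z$'', but the pair $(l,m)$ must of course be allowed to depend on $z$; this is how the lemma is actually used throughout the paper, and your proof handles it correctly (the finiteness of $\binom{2n}{2}$ is what makes $c$ uniform). Second, your final rescaling step near critical points is not strictly needed: since $r$ is globally smooth, the second derivatives controlling the perturbation of the Jacobian are uniformly bounded, so one already has $\|D\Psi(\zeta)-D\Psi(z)\|=O(|\zeta-z|)=O(c\gamma(z))$ and $\|D\Psi(z)^{-1}\|=O(1/\gamma(z))$ directly, which gives the uniform $c$ without rescaling.
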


We define the function spaces with which we will be working.
\begin{defn}  Let $0\le\beta$ and $0\le\delta$.  We define
\begin{equation*}
\|f\|_{L^{\infty,\beta,\delta}(D)}=\sup_{\zeta\in D}
 |f(\zeta)|\gamma^{\beta}(\zeta)|r(\zeta)|^{\delta}.
\end{equation*}
\end{defn}
\begin{defn}
We set for $0<\alpha<1$
\begin{equation*}
\Lambda_{\alpha}(D)
 =\{ f\in L^{\infty}(D)\ |\ \|f\|_{\Lambda_{\alpha}}
 := \|f\|_{L^{\infty}}+\sup
 \frac{|f(\zeta)-f(z)|}{|\zeta-z|^{\alpha}}<\infty \}.
\end{equation*}
\end{defn}
We also define the spaces $\Lambda_{\alpha,\beta}$ by
\begin{equation*}
 \Lambda_{\alpha,\beta}:=\{ f: \|f\|_{\Lambda_{\alpha,\beta}}=
\|\gamma^{\beta}f\|_{\Lambda_{\alpha}}<\infty \}.
\end{equation*}

From \cite{Eh09a}, we have the
     \begin{lemma}
\label{c1diff}
\begin{equation*}
\frac{r_{\epsilon}}{\gamma}\in C^1(D_{\epsilon})
\end{equation*}
with $C^1$-estimates independent of $\epsilon$.
\end{lemma}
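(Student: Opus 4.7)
The plan is to reduce via a partition of unity to two regimes in $\overline{D_\epsilon}$: (a) away from the critical set of $r$, and (b) inside small neighborhoods of each critical point. Since $r$ is Morse, its critical set in a neighborhood of $\overline{D}$ is finite, and under the standing assumption that the boundary singularities of $D$ arise precisely from critical points of $r$ on $\partial D$, every such critical point $\zeta_0$ satisfies $r(\zeta_0)=0$ and therefore lies outside $D_\epsilon$ for $\epsilon>0$. In regime (a), $\gamma\ge\gamma_0>0$ is a fixed smooth function of $\zeta$ only, so $r_\epsilon/\gamma=r/\gamma+\epsilon/\gamma$ is smooth with $C^1$-bounds independent of $\epsilon\le 1$.

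In regime (b), near a critical point $\zeta_0\in\partial D$ the cornerstone estimate is
\begin{equation*}
|r(\zeta)|\lesssim \gamma(\zeta)^2.
\end{equation*}
Taylor's theorem at $\zeta_0$, using $r(\zeta_0)=0$ and $\nabla r(\zeta_0)=0$, gives $|r(\zeta)|\le C|\zeta-\zeta_0|^2$. The non-degeneracy of the Hessian at $\zeta_0$ (the Morse condition) yields $|\partial r(\zeta)|\ge c|\zeta-\zeta_0|$, and since $\gamma$ is the norm of $\partial r$ in the Levi metric, $\gamma(\zeta)\ge c'|\zeta-\zeta_0|$. Eliminating $|\zeta-\zeta_0|$ proves the inequality. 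A closely related calculation produces $|\gamma_{\zeta_j}|\lesssim 1$: differentiating $\gamma^2=g^{k\bar\ell}r_{\zeta_k}\overline{r_{\zeta_\ell}}$, every term of the result carries a first derivative of $r$ as a factor, so $|\partial_{\zeta_j}(\gamma^2)|\lesssim\gamma$, and dividing by $2\gamma$ finishes the bound.

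With these two ingredients in hand the conclusion is immediate. On $D_\epsilon=\{r<-\epsilon\}$ one has $|r_\epsilon|=-r-\epsilon\le -r=|r|$, so combining with the cornerstone estimate,
\begin{equation*}
\left|\frac{r_\epsilon}{\gamma}\right|\le\frac{|r|}{\gamma}\lesssim\gamma\lesssim 1,\qquad
\left|\partial_{\zeta_j}\!\left(\frac{r_\epsilon}{\gamma}\right)\right|
\le\frac{|r_{\zeta_j}|}{\gamma}+\frac{|r_\epsilon|\,|\gamma_{\zeta_j}|}{\gamma^2}
\le\frac{|r_{\zeta_j}|}{\gamma}+\frac{|r|\,|\gamma_{\zeta_j}|}{\gamma^2}\lesssim 1,
\end{equation*}
with constants independent of $\epsilon$; continuity of the derivatives on $D_\epsilon$ is automatic because $\gamma>0$ there. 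The principal obstacle is to align the vanishing rates of $r_\epsilon$ and $\gamma$ at the boundary singularities \emph{uniformly} in $\epsilon$; this is exactly what the Morse hypothesis, together with the sign observation $|r_\epsilon|\le|r|$ on $D_\epsilon$, accomplishes.
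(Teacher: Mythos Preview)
The paper does not actually prove this lemma; it is quoted from \cite{Eh09a} without argument. Your proof is correct and self-contained. The key inequality $|r|\lesssim\gamma^{2}$ near a boundary critical point is precisely what the paper itself invokes later (just before the estimate of $IV_b$ in the proof of Theorem~\ref{dertype1}) with the remark that $|r(\zeta)|/\gamma(\zeta)^{2}\lesssim 1$ ``follows by working in the coordinates of (\ref{rcoor}) near a critical point''; your Taylor--Morse derivation supplies exactly that justification. The bound $|\partial_{\zeta_j}\gamma|\lesssim 1$ via $|\partial(\gamma^{2})|\lesssim\gamma$ is clean, and the sign observation $|r_\epsilon|\le|r|$ on $D_\epsilon$ is the right way to make the constants uniform in $\epsilon$.

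One point worth flagging: you make explicit the assumption that every critical point of $r$ in a neighborhood of $\overline D$ lies on $\partial D$. The paper leaves this implicit, but its own use of Morse coordinates in (\ref{rcoor}), where $-r=u^{2}-v^{2}$ with the $u,v$ vanishing at the critical point, forces $r=0$ there, so the assumption is consistent with the paper's setup. Without it the statement would fail, since $r_\epsilon/\gamma$ would blow up at any interior critical point of $r$ contained in $D_\epsilon$.
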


For our $C^k$ estimates later, we will need the following
properties.
\begin{thrm}
\label{dertype1}
 Let $T$ be a smooth first order tangential differential operator on $D$.
 For $A$ an operator of type 1 we have
\begin{align*}
&i)\ A:L^{\infty,2+\epsilon,0}(D)\rightarrow
 \Lambda_{\alpha,2-\epsilon'}(D)
\qquad 0<\epsilon,\epsilon', \quad \alpha+\epsilon+\epsilon'<1/4\\
 &ii)\ \gamma^{\ast}T A:L^{\infty,2+\epsilon,0}(D) \rightarrow
 L^{\infty,\epsilon',\delta}(D) \qquad 1/2<\delta<1,
 \quad \epsilon<\epsilon'<1 \\
 &iii)\ A: L^{\infty,\epsilon,\delta}(D)\rightarrow
 L^{\infty,\epsilon',0}(D) \qquad \epsilon<\epsilon',
 \quad \delta<1/2 +(\epsilon'-\epsilon)/2.
\end{align*}
\end{thrm}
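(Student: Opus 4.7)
The plan is to prove each of the three statements by direct kernel estimates using the Schmalz coordinate system of Lemma \ref{schmalzlem}, with part (i) invoking the H\"older criterion of Lemma \ref{schmlemma} and part (ii) reducing to (i) via the commutator identity of Theorem \ref{commutator}. Work locally via a partition of unity and reduce to a neighborhood of a boundary point, possibly a critical point of $r$. In such a neighborhood $\lra_1$ admits the representation \eqref{typerep}, so its pointwise bounds in the coordinates $\{-r(\zeta),\text{Im}\,\phi(\cdot,z),x_1,\ldots\}$ follow from Lemma \ref{phiest} together with the integrability encoded by having smooth type large enough.

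For part (i), fix $z,w\in D$. By Lemma \ref{schmlemma} applied to $u=\gamma^{2-\epsilon'}Af$, it suffices to dominate $|u(z)-u(w)|$ by $|z-w|^{\alpha}+|\nabla r(y)|\,|z-w|^{1/2+\alpha}/|r(y)|^{1/2}$. Split $u(z)-u(w)$ into $\bigl[\gamma(z)^{2-\epsilon'}-\gamma(w)^{2-\epsilon'}\bigr]Af(w)$ and $\gamma(z)^{2-\epsilon'}\int\bigl[\lra_1(\zeta,z)-\lra_1(\zeta,w)\bigr]f(\zeta)\,dV(\zeta)$. The first is handled by H\"older regularity of $\gamma^{2-\epsilon'}$ combined with the $\alpha=0$ bound on $Af$. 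For the integral, split the $\zeta$-domain into $\{|\zeta-z|\le 2|z-w|\}$, where we estimate $\lra_1(\zeta,z)$ and $\lra_1(\zeta,w)$ separately using $|f(\zeta)|\le\gamma(\zeta)^{-(2+\epsilon)}\|f\|_{L^{\infty,2+\epsilon,0}}$, and its complement, where a mean-value argument gives an extra $|z-w|$ against a derivative of $\phi$, $P$, $\gamma$ or $r$. The volume estimate from Lemma \ref{schmalzlem} contributes $\gamma(z)^{-2}$, and distributing the remaining exponents through the Schmalz coordinates yields the H\"older gain $\alpha$ so long as $\alpha+\epsilon+\epsilon'<1/4$.

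Part (ii) follows by first applying Theorem \ref{commutator} with $X=T$: it writes
\begin{equation*}
\gamma^{\ast}TA = -A\widetilde{T}^{\zeta}\gamma+A^{(0)}+\sum_{\nu}A^{(\nu)}W_{\nu}^{\zeta}\gamma,
\end{equation*}
where each $A^{(\nu)}$ is again admissible of commutator type $\ge 1$ and each $\widetilde{T}$, $W_{\nu}$ acts only on $\zeta$. Each summand is then estimated by the same kernel/Schmalz-coordinate technique used in (i), with the $\gamma(\zeta)$ factor from the right absorbed into the $\gamma^{2+\epsilon}$ weight on $f$; the range $1/2<\delta<1$ is precisely what is needed to absorb the $|r|^{-1/2}$ factor generated by integrating $dr$ across the thin boundary strip after the volume change. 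Part (iii) is the most direct: substitute $|f(\zeta)|\le\gamma(\zeta)^{-\epsilon}|r(\zeta)|^{-\delta}\|f\|_{L^{\infty,\epsilon,\delta}}$, pass to Schmalz coordinates with volume factor $\gamma(z)^{-2}$, and integrate; the bookkeeping forces exactly $\delta<1/2+(\epsilon'-\epsilon)/2$ to keep the $r$-integral convergent after the $\gamma^{\epsilon'-\epsilon-2}$ reweighting.

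The main obstacle is part (i): tracking the modulus of continuity of $\lra_1$ in $z$ uniformly down to the singular boundary. Derivatives of $\phi$, $P$, and of $1/\gamma$ blow up at rates that must be dominated by the slack in the smooth-type count and by the extra powers $\gamma^{2-\epsilon'}$ and $\gamma^{-(2+\epsilon)}$; the tight budget forces $\alpha+\epsilon+\epsilon'<1/4$, giving only $1/4$ in place of the classical $1/2$ of \cite{LR86_2}.
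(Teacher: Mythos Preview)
Your outline for part (i) is close in spirit to the paper's argument: both use Lemma~\ref{schmlemma} as the H\"older criterion and Lemma~\ref{schmalzlem} for the volume estimate near the singular boundary. The paper, however, does not use the near/far splitting $\{|\zeta-z|\le 2|z-w|\}$ versus its complement; instead it writes the difference of kernels algebraically (separating the $\phi$-difference from the $P$-difference), then splits over $\{|\zeta-z|\le|\zeta-w|\}$ versus $\{|\zeta-w|\le|\zeta-z|\}$ and over $E_c(z)$ versus $D\setminus E_c(z)$, treating the cases $\gamma(w)\le\gamma(z)$ and $\gamma(z)\le\gamma(w)$ separately. Your mean-value approach is a legitimate alternative, but you should be aware that derivatives of $\phi$, $P$ and $1/\gamma$ degenerate at the critical points, so the ``extra $|z-w|$'' you gain on the far region must be balanced against these blow-ups; the paper's algebraic splitting avoids having to differentiate anything in $z$.

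Your argument for part (ii), however, has a genuine gap. Theorem~\ref{commutator} transfers the derivative $X^z$ to the $\zeta$-side: the right-hand side contains $A_1\tilde{X}^{\zeta}\gamma$ and $A_1^{(\nu)}W_{\nu}^{\zeta}\gamma$, both of which act by first applying a $\zeta$-derivative to the input. But in (ii) the input $f$ lies only in $L^{\infty,2+\epsilon,0}(D)$; no differentiability is assumed, so $\tilde{T}^{\zeta}(\gamma f)$ and $W_{\nu}^{\zeta}(\gamma f)$ need not exist, let alone lie in a space to which (i) applies. (There is also the issue that Theorem~\ref{commutator} is stated for operators of \emph{commutator} type $\ge 1$, not arbitrary type $1$.) The paper proceeds quite differently: it applies $\gamma^{\ast}T^z$ directly to the kernel, using the relations $T^z r^{\ast}=\lre_{0,0}r$, $T^zP=\lre_{1,0}+(\lre_{0,0}/\gamma^{\ast})(P+\lre_{2,0})$, $T^z\phi=\lre_{0,1}+\lre_{1,0}$ to obtain
\[
\gamma^{\ast}T^z\lra_{(3)}=\gamma^{\ast}\lra_{(1)}\gamma+\gamma^{\ast}\lra_{(2)}+\lra_{(3)},
\]
and then establishes the weighted $L^{\infty}$ bound by integrating these lower-smooth-type kernels against $\gamma^{-(2+\epsilon)}$. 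The derivative never touches $f$. Your description of (iii) is fine and matches the paper's.
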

\begin{proof}
$i)$. We will prove $i)$
 in the cases that $\lra$,
the kernel of $A$ is of double type $(1,1)$ satisfying the
inequality
\begin{equation*}
|\lra| \lesssim
\frac{\gamma(\zeta)^2}{P^{n-1/2-\mu}|\phi|^{\mu+1}} \quad \mu\ge 1
\end{equation*}
and $\lra$ is of double type $(1,2)$ satisfying
\begin{equation*}
|\lra| \lesssim \frac{\gamma(\zeta)}{P^{n-1-\mu}|\phi|^{\mu+1}}
\quad \mu\ge 1,
\end{equation*}
all other cases being handled by the same methods.
\\ $Case\ a)$.  $\lra$, the kernel of $A$, is of double type
$(1,1)$.

We estimate
\begin{equation}
 \label{hold}
\int_{D}\frac{1}{\gamma^{\epsilon}(\zeta)} \left|
\frac{\gamma(z)^{2-\epsilon'}}{(\phi(\zeta,z))^{\mu+1}P(\zeta,z)^{n-1/2-\mu}}
 -\frac{\gamma(w)^{2-\epsilon'}}{(\phi(\zeta,w))^{\mu+1}P(\zeta,w)^{n-1/2-\mu}}\right|
 dV(\zeta).
\end{equation}
 Then the integral in (\ref{hold})
is bounded by
\begin{align*}
    &\int_{D}\frac{1}{\gamma^{\epsilon}(\zeta)}\left| \frac{
  \gamma(z)^{2-\epsilon'}(\phi(\zeta,w))^{\mu+1}-
\gamma(w)^{2-\epsilon'}(\phi(\zeta,z))^{\mu+1}}{
  (\phi(\zeta,w))^{\mu+1}(\phi(\zeta,z))^{\mu+1}
  P(\zeta,z)^{n-1/2-\mu}}
  \right|dV(\zeta)\\
&\quad+\int_{D}
\frac{\gamma(w)^{2-\epsilon'}}{\gamma^{\epsilon}(\zeta)}\left|
\frac{P(\zeta,z)^{n-1/2-\mu}-P(\zeta,w)^{n-1/2-\mu}}
 {(\phi(\zeta,w))^{\mu+1}P(\zeta,z)^{n-1/2-\mu}P(\zeta,w)^{n-1/2-\mu}
 }
 \right|dV(\zeta)\\
 &\quad= I+II.
\end{align*}

In $I$ we use
\begin{equation*}
(\phi(\zeta,w))^{\mu+1}-(\phi(\zeta,z))^{\mu+1}=
 \sum_{l=0}^{\mu}(\phi(\zeta,w))^{\mu-l}(\phi(\zeta,z))^{l}(\phi(\zeta,w)-\phi(\zeta,z))
\end{equation*}
and
\begin{equation*}
\phi(\zeta,w)-\phi(\zeta,z)
 =O\big(\gamma(\zeta)+|\zeta-z|\big)|z-w|.
\end{equation*}
Therefore
\begin{align*}
I&\lesssim \sum_{l=0}^{\mu}
\int_{D}\frac{\gamma(z)^{2-\epsilon'}}{\gamma^{\epsilon}(\zeta)}
\frac{(\gamma(\zeta)+|\zeta-z|)|z-w|}{|\phi(\zeta,z)|^{\mu+1-l}|\phi(\zeta,w)|^{l+1}
|\zeta-z|^{2n-1-2\mu}}dV(\zeta)\\
 &\qquad+ \int_{D}
\frac{1}{\gamma^{\epsilon}(\zeta)}
\frac{|\gamma(z)^{2-\epsilon'}-\gamma(w)^{2-\epsilon'}|}
 {|\phi(\zeta,w)|^{\mu+1}
|\zeta-z|^{2n-1-2\mu}}dV(\zeta)\\
&\lesssim \sum_{l=0}^{\mu}\int_{D}
\frac{\gamma(z)^{3-\epsilon'}}{\gamma^{\epsilon}(\zeta)}
\frac{|z-w|}{|\phi(\zeta,z)|^{\mu+1-l}|\phi(\zeta,w)|^{l+1}
|\zeta-z|^{2n-1-2\mu}}dV(\zeta)\\
&\qquad+ \sum_{l=0}^{\mu}\int_{D}
 \frac{\gamma(z)^{2-\epsilon'}}{\gamma^{\epsilon}(\zeta)}
\frac{|z-w|}{|\phi(\zeta,z)|^{\mu+1-l}|\phi(\zeta,w)|^{l+1}
|\zeta-z|^{2n-2-2\mu}}dV(\zeta)\\
&\qquad+
 \int_{D} \frac{1}{\gamma^{\epsilon}(\zeta)}
\frac{|\gamma(z)^{2-\epsilon'}-\gamma(w)^{2-\epsilon'}|}
 {|\phi(\zeta,w)|^{\mu+1}
|\zeta-z|^{2n-1-2\mu}}dV(\zeta)\\
 &=I_a+I_b+I_c
\end{align*}
For the integral $I_a$ we break the region of integration into two
parts:
 $\{ |\zeta-w|\le |\zeta-z| \}$ and $\{ |\zeta-z|\le |\zeta-w|
 \}$, and by symmetry we need only consider the region
$\{ |\zeta-z|\le |\zeta-w| \}$.

We first consider the region $E_c$, where $c$ is chosen as in
Lemma \ref{schmlemma}.  Without loss of generality we can choose
$c$ sufficiently small so that $\gamma(z)\lesssim \gamma(\zeta)$
holds in $E_c(z)$.
 We thus estimate
\begin{equation}
\label{intia}
 \int_{D\cap E_c\atop |\zeta-z|\le |\zeta-w|}
\gamma(z)^{3-\epsilon'-\epsilon}\frac{|z-w|}{|\phi(\zeta,z)|^{\mu+1-l}
|\phi(\zeta,w)|^{l+1} |\zeta-z|^{2n-1-2\mu}}dV(\zeta)
 .
\end{equation}
We use $\gamma(z)\lesssim \gamma(w)+|z-w|$ and
\begin{align}
\label{zalpha}
 |z-w|^{\beta}&\lesssim
|\zeta-z|^{\beta}+|\zeta-w|^{\beta}\\
\nonumber &\lesssim
 |\zeta-w|^{\beta}
\end{align}
for $\beta>0$ to bound the integral in (\ref{intia}) by a constant
times
\begin{align}
\nonumber
 |z-w|^{1/2+\alpha}&\int_{D\cap E_c\atop |\zeta-z|\le
|\zeta-w|}\frac{\gamma(z)^2\gamma(w)|\zeta-w|^{1/2-\alpha}}{
 |\phi(\zeta,z)|^{\mu+1-l}
 |\phi(\zeta,w)|^{l+1}|\zeta-z|^{2n-1-2\mu+\epsilon+\epsilon'}}
dV(\zeta)\\
\label{no1}
 \qquad+|z-w|^{\alpha}&\int_{D\cap E_c\atop |\zeta-z|\le |\zeta-w|}
 \frac{\gamma(z)^2 |\zeta-w|^{2-\alpha}}{
 |\phi(\zeta,z)|^{\mu+1-l}
 |\phi(\zeta,w)|^{l+1}|\zeta-z|^{2n-1-2\mu+\epsilon+\epsilon'}}
dV(\zeta).
\end{align}
 We use
  a coordinate system $s_1, s_2,t_1,\ldots,t_{2n-2}$ as
  given by Lemma \ref{schmalzlem} with $s_1=-r(\zeta)$ and $s_2=\mbox{Im} \phi$, and the
estimate (\ref{volest}) on the volume element
\begin{equation}
\label{volestia}
dV(\zeta)\lesssim \frac{t^{2n-3}}{\gamma(z)^2}|ds_1ds_2dt|\\
\end{equation}
where $t=\sqrt{t_1^2+\cdots+t_{2n-2}^2}$, and the second line
follows from $\gamma(\zeta)\lesssim \gamma(z)$ on $E_c(z)$.

We have the estimates
\begin{align*}
 \phi(\zeta,z) & \gtrsim s_1+|s_2| +t^2\\
 \phi(\zeta,w) & \gtrsim -r(w)+s_1+t^2.
\end{align*}

After redefining $s_2$ to be positive, we bound the first integral
of (\ref{no1}) by
\begin{align}
\label{no1est} &
\frac{|z-w|^{1/2+\alpha}}{|r(w)|^{1/2}}\gamma(w)\times
 \\
 \nonumber
 &\qquad\int_V\frac{|\zeta-w|^{1/2-\alpha
 }}
 {(s_1+s_2+t^2)^{\mu+1-l}(s_1+|\zeta-w|^2)^{l+1/2}
  t^{2n-1-2\mu+\epsilon+\epsilon'}}
t^{2n-3}ds_1ds_2dt
\\
\nonumber
&\lesssim\frac{|z-w|^{1/2+\alpha}}{|r(w)|^{1/2}}\gamma(w)\int_V
\frac{t^{2\mu-2-\epsilon-\epsilon'}}{(s_1+s_2+t^2)^{\mu+1-l}(s_1+t^2)^{l+1/4+\alpha/2}}
 ds_1ds_2dt\\
 \nonumber
&\lesssim
 \frac{|z-w|^{1/2+\alpha}}{|r(w)|^{1/2}}\gamma(w)\int_V\frac{1}
 {s_1^{7/8}(s_1+s_2)t^{3/4+\alpha+\epsilon+\epsilon'}}
ds_1ds_2dt\\
\nonumber
 &\lesssim
\frac{|z-w|^{1/2+\alpha}}{|r(w)|^{1/2}}\gamma(w) \int_V\frac{1}
 {s_1^{15/16}s_2^{15/16}t^{3/4+\alpha+\epsilon+\epsilon'}}
ds_1ds_2dt\\
\nonumber
 &\lesssim
  \frac{|z-w|^{1/2+\alpha}}{|r(w)|^{1/2}}\gamma(w),
\end{align}
where $V$ is a bounded subset of $\mathbb{R}^3$.

The second integral of (\ref{no1}) can be bounded by a constant
times
\begin{align*}
|z-w|^{\alpha}
 &\int_V\frac{|\zeta-w|^{2-\alpha
 }}
 {(s_1+s_2+t^2)^{\mu+1-l}(s_1+|\zeta-w|^2)^{l+1}
  t^{2n-1-2\mu+\epsilon+\epsilon'}}
t^{2n-3}ds_1ds_2dt\\
&\lesssim |z-w|^{\alpha}\int_V\frac{t^{2\mu-2-\epsilon-\epsilon'}}
 {(s_1+s_2+t^2)^{\mu+1-l}(s_1+t^2)^{l+\alpha/2}
  }
ds_1ds_2dt\\
&\lesssim |z-w|^{\alpha},
\end{align*}
where again $V$ is a bounded subset of $\mathbb{R}^3$.  The last
line follows by the estimates in (\ref{no1est}).

In estimating the integrals of $I_a$ over the region $D\setminus
E_c$, we write
\begin{align}
\nonumber
 &\int_{D\setminus E_c\atop |\zeta-z|\le
 |\zeta-w|}\frac{1}{\gamma^{\epsilon}(\zeta)}
\frac{|z-w|}{|\phi(\zeta,z)|^{\mu+1-l} |\phi(\zeta,w)|^{l+1}
|\zeta-z|^{2n-4-2\mu+\epsilon'}}dV(\zeta)\\
\nonumber
 &\qquad\lesssim
 |z-w|^{\alpha}\int_{D\setminus E_c\atop |\zeta-z|\le |\zeta-w|}
\frac{1}{\gamma^{\epsilon}(\zeta)}
\frac{|\zeta-w|^{1-\alpha}}{|\phi(\zeta,z)|^{\mu+1-l}
|\phi(\zeta,w)|^{l+1}
|\zeta-z|^{2n-4-2\mu+\epsilon'}}dV(\zeta)\\
\nonumber
 &\qquad\lesssim
 |z-w|^{\alpha}\times\\
 \nonumber
 &\qquad\qquad \int_{D\setminus E_c\atop |\zeta-z|\le |\zeta-w|}
\frac{1}{\gamma^{\epsilon}(\zeta)}
\frac{1}{|\phi(\zeta,z)|^{\mu+1-l}
|\phi(\zeta,w)|^{l+1/2+\alpha/2}
|\zeta-z|^{2n-4-2\mu+\epsilon'}}dV(\zeta)\\
\label{minec} &\qquad\lesssim
 |z-w|^{\alpha}\int_{D\setminus E_c}
 \frac{1}{\gamma^{\epsilon}(\zeta)} \frac{1}{
|\zeta-z|^{2n-1+\alpha+\epsilon'}}dV(\zeta).
\end{align}
We denote the critical points of $r$ by $p_1,\ldots,p_k$, and take
$\varepsilon$ small enough so that in each
\begin{equation*}
U_{2\varepsilon}(p_j)=\{\zeta: D\cap |\zeta-p_j|<2\varepsilon\},
\end{equation*}
for $j=1,\ldots,k$,
 there are coordinates $u_{j_1},\ldots,u_{j_m},v_{j_{m+1}},\ldots,v_{j_{2n}}$ such
 that
 \begin{equation}
 \label{rcoor}
 -r(\zeta)=u_{j_1}^2+\cdots+u_{j_m}^2-v_{j_{m+1}}^2-\cdots - v_{j_{2n}}^2,
 \end{equation}
with $u_{j_{\alpha}}(p_j)=v_{j_{\beta}}(p_j)=0$ for all $1\le
\alpha\le m$ and $m+1\le\beta\le 2n$, from the Morse Lemma.  Let
$U_{\varepsilon}=\bigcup_{j=1}^k U_{\varepsilon}(p_j)$. We break
the problem of estimating (\ref{minec}) into subcases depending on
whether $z\in U_{\varepsilon}$.

  Suppose $z\in U_{\varepsilon}(p_j)$.  Define
$w_1,\ldots,w_{2n}$ by
 \begin{equation}
 \label{defnw}
 w_{\alpha}=\begin{cases}
 u_{j_{\alpha}}\quad \mbox{for } 1\le
\alpha\le m\\
 v_{j_{\alpha}} \quad \mbox{for } m+1\le\alpha\le 2n.
\end{cases}
\end{equation}
Let $x_1,\ldots,x_{2n}$ be defined by
$\zeta_{\alpha}=x_{\alpha}+ix_{n+\alpha}$.  From the Morse Lemma,
the Jacobian of the transformation from coordinates
$x_1,\ldots,x_{2n}$ to $w_1,\ldots,w_{2n}$ is bounded from below
and above and thus we have
\begin{equation*}
|\zeta-z| \simeq |w(\zeta)-w(z)|
\end{equation*}
for $\zeta$, $z\in U_{2\varepsilon}(p_j)$.

 From (\ref{rcoor}) we have
$\gamma(z)\gtrsim |w(z)|$, and thus
\begin{align*}
|w(\zeta)-w(z)| & \simeq |\zeta-z|\\
&
\gtrsim \gamma(z)\\
&\gtrsim |w(z)|\\
&\ge |w(\zeta)| -|w(\zeta)-w(z)|,
\end{align*}
and we obtain
\begin{align*}
|w(\zeta)|&\lesssim |w(\zeta)-w(z)|\\
&\simeq|\zeta-z|.
\end{align*}

Using $|w(\zeta)|\lesssim\gamma(\zeta)$, we estimate, using the
coordinates above
\begin{align*}
|z-w|^{\alpha}\int_{U_{\varepsilon}\setminus E_c}
 \frac{1}{\gamma^{\epsilon}(\zeta)}& \frac{1}{
|\zeta-z|^{2n-1+\alpha+\epsilon'}}dV(\zeta)\\
&\lesssim
 |z-w|^{\alpha}\int_{V}
\frac{u^{m-1}v^{2n-m-1}}{(u+v)^{2n-1+\alpha+\epsilon'+\epsilon}}\\
&\lesssim
 |z-w|^{\alpha},
\end{align*}
where we use $u=\sqrt{u_{j_1}^2+\cdots+u_{j_m}^2}$,
$v=\sqrt{v_{j_{m+1}}^2+\cdots+v_{j_{2n}}^2}$, and $V$ is a bounded
set.

In integrating over the region $D\setminus U_{\varepsilon}$ we
have
\begin{multline*}
|z-w|^{\alpha}\int_{(D\setminus U_{\varepsilon})\setminus E_c}
 \frac{1}{\gamma^{\epsilon}(\zeta)} \frac{1}{
|\zeta-z|^{2n-1+\alpha+\epsilon'}}dV(\zeta)\\ \lesssim
|z-w|^{\alpha}\int_{(D\setminus U_{\varepsilon})\setminus E_c}
 \frac{1}{\gamma^{\epsilon}(\zeta)} dV(\zeta) \lesssim
 |z-w|^{\alpha},
\end{multline*}
which follows by using the coordinates $w_1,\ldots,w_{2n}$ above.

Subcase $b)$.  Suppose $z\notin U_{\varepsilon}$.
 We have $|\zeta-z|\gtrsim \gamma(z)$, but
$\gamma(z)$ is bounded from below, since $z\notin
U_{\varepsilon}$.  We therefore have to estimate
\begin{equation*}
\int_{D}\frac{1}{\gamma^{\epsilon}(\zeta)}dV(\zeta),
\end{equation*}
which is easily done by working with the coordinates
$w_1,\ldots,w_{2n}$ above.

 The region in which $|\zeta-w|\le|\zeta-z|$ is handled in the
same manner, and thus we are finished bounding $I_a$.

We now estimate $I_b$, and again, we only consider the region
$|\zeta-z|\le|\zeta-w|$.  We first estimate the integrals of $I_b$
over the region $E_c(z)$, where $c$ is chosen as in Lemma
\ref{schmalzlem}, and sufficiently small so that
$|\zeta-z|\lesssim\gamma(\zeta)$.   As we chose coordinates for
the integrals in $I_a$, we choose a coordinate system in which
 $s_1=-r(\zeta)$ and $s_2=\mbox{Im} \phi$ and we use the estimate
 on the volume element given by (\ref{volestia}).
We thus write
\begin{align}
 \label{estib}
&\int_{D\cap E_c\atop |\zeta-z|\le|\zeta-w|}\gamma(z)^2
\frac{|z-w|}{|\phi(\zeta,z)|^{\mu+1-l}|\phi(\zeta,w)|^{l+1}
|\zeta-z|^{2n-2-2\mu+\epsilon+\epsilon'}}dV(\zeta)\\
 \nonumber
&\quad\lesssim
 |z-w|^{\alpha}\times\\
 \nonumber
&\qquad\int_{D\cap E_c\atop |\zeta-z|\le|\zeta-w|}
 \gamma(z)^2\frac{1}{|\phi(\zeta,z)|^{\mu+1-l}|\phi(\zeta,w)|^{l+1/2+\alpha/2}
|\zeta-z|^{2n-2-2\mu+\epsilon+\epsilon'}}dV(\zeta)\\
\nonumber
 &\quad\lesssim
 |z-w|^{\alpha}
 \int_V
 \frac{t^{2n-3}}{(s_1+s_2+t^2)^{\mu+1-l}(s_1+t^2)^{l+1/2+\alpha/2}
 t^{2n-2-2\mu+\epsilon+\epsilon'}}ds_1ds_2dt\\
 \nonumber
 &\quad\lesssim
|z-w|^{\alpha}
 \int_0^M\int_0^N
 \frac{t^{2\mu-1-\epsilon-\epsilon'}}{(s_1+t^2)^{\mu-l}(s_1+t^2)^{l+1/2+\alpha/2}
 }ds_1dt\\
 \nonumber
&\quad\lesssim |z-w|^{\alpha}
 \int_0^M\int_0^N
 \frac{1}{s_1^{7/8}t^{1/4+\alpha+\epsilon+\epsilon'}}dsdt\\
\nonumber
 &\quad\lesssim |z-w|^{\alpha},
\end{align}
where we have redefined the coordinate $s_2$ to be positive, $V$
is a bounded subset of $\mathbb{R}^3$, and $M,N>0$ are constants.

The integrals of $I_b$ over the region $D\setminus E_c$ are
estimated by (\ref{minec}) above.

For the integral $I_c$ we use
\begin{equation*}
|\gamma(w)^{2-\epsilon'}-\gamma(z)^{2-\epsilon'}|
 \lesssim |z-w| \left(\gamma(w)^{1-\epsilon'}+\gamma(z)^{1-\epsilon'}\right)
\end{equation*}
and estimate
\begin{equation}
 \label{wandz}
  \int_{D\atop |\zeta-z|\le|\zeta-w|}
  \frac{1}{\gamma^{\epsilon}(\zeta)}
\frac{|z-w|
\left(\gamma(w)^{1-\epsilon'}+\gamma(z)^{1-\epsilon'}\right)}
 {|\phi(\zeta,w)|^{\mu+1}
|\zeta-z|^{2n-1-2\mu}}dV(\zeta).
\end{equation}
Let us first consider the case $\gamma(w)\le \gamma(z)$ and
integrate (\ref{wandz}) over the region $E_c$.  We use a
coordinate system $s, t_1, \ldots, t_{2n-1}$, with $s=-r$ and the
estimate
\begin{equation*}
dV(\zeta) \lesssim \frac{t^{2n-2}}{\gamma(z)}dsdt
\end{equation*}
for $t=\sqrt{t_1^2+\cdots+t_{2n-1}^2}$.  We thus bound
(\ref{wandz}) by
\begin{align}
 \label{3czb}
\int_{D\cap E_c\atop |\zeta-z|\le|\zeta-w|} &\frac{|z-w|
\gamma(z)^{1-\epsilon'}}
 {|\phi(\zeta,w)|^{\mu+1}
|\zeta-z|^{2n-1-2\mu+\epsilon}}dV(\zeta)\\
\nonumber
 &\lesssim |z-w|^{\alpha}
 \int_{D\cap E_c\atop |\zeta-z|\le|\zeta-w|} \frac{
\gamma(z)}
 {|\phi(\zeta,w)|^{\mu+1/2+\alpha/2}
|\zeta-z|^{2n-1-2\mu+\epsilon+\epsilon'}}dV(\zeta)\\
\nonumber
 &\lesssim |z-w|^{\alpha}
 \int_V \frac{t^{2n-2}}{(s+t^2)^{\mu+1/2+\alpha/2}
 t^{2n-1-2\mu+\epsilon+\epsilon'}}dsdt\\
\nonumber
 &\lesssim |z-w|^{\alpha}
 \int_V \frac{1}{s^{3/4}t^{1/2+\epsilon+\epsilon'+\alpha/2}} ds dt\\
 \nonumber
&\lesssim |z-w|^{\alpha},
\end{align}
where $V$ is here a bounded region of $\mathbb{R}^2$.

Over the complement of $E_c$, (\ref{wandz}) is bounded by
\begin{align*}
|z-w|^{\alpha}
 \int_{D\atop |\zeta-z|\le|\zeta-w|} &\frac{1}{\gamma^{\epsilon}(\zeta)}
\frac{1}
 {|\phi(\zeta,w)|^{\mu+1/2+\alpha/2}
|\zeta-z|^{2n-2-2\mu+\epsilon'}}dV(\zeta)\\
&\lesssim |z-w|^{\alpha}
 \int_{D} \frac{1}{\gamma^{\epsilon}(\zeta)}
\frac{1}{|\zeta-z|^{2n-1+\epsilon'+\alpha}}dV(\zeta)\\
&\lesssim |z-w|^{\alpha},
\end{align*}
which follows from the estimates of (\ref{minec}) above.

For the case $\gamma(z)\le\gamma(w)$ we estimate (\ref{wandz})
over the region $E_c$ using coordinates as above by
\begin{align}
 \label{4ae}
&\int_{D\cap E_c\atop
|\zeta-z|\le|\zeta-w|}\frac{1}{\gamma^{\epsilon}(\zeta)}
\frac{|z-w| \gamma(w)^{1-\epsilon'}}
 {|\phi(\zeta,w)|^{\mu+1}
|\zeta-z|^{2n-1-2\mu}}dV(\zeta)\\
\nonumber
 &\qquad \lesssim \frac{|z-w|^{1/2+ \alpha/2}}{|r(w)|^{1/2}}
 \gamma(w)\times\\
\nonumber
 &\qquad\quad \int_{D\cap E_c\atop
|\zeta-z|\le|\zeta-w|} \frac{1}{\gamma^{\epsilon}(\zeta)} \frac{
1}
 {|\phi(\zeta,w)|^{\mu+1/4+\alpha/2}
|\zeta-z|^{2n-1-2\mu+\epsilon'}}dV(\zeta)\\
\nonumber
 &\qquad \lesssim \frac{|z-w|^{1/2+
\alpha/2}}{|r(w)|^{1/2}} \gamma(w)
 \int_V \frac{t^{2n-2}}{(s+t^2)^{\mu+1/4+\alpha/2}
 t^{2n-2\mu+\epsilon+\epsilon'}}dsdt\\
\nonumber
 &\qquad \lesssim \frac{|z-w|^{1/2+
\alpha/2}}{|r(w)|^{1/2}} \gamma(w),
\end{align}
where the last line follows as above.  While over the complement
of $E_c$ we use $\gamma(w)\lesssim |\zeta-w|$ to bound
(\ref{wandz}) by
\begin{align*}
|z-w|^{\alpha}
 \int_{D\atop |\zeta-z|\le|\zeta-w|} &
 \frac{1}{\gamma^{\epsilon}(\zeta)}\frac{1}
 {|\phi(\zeta,w)|^{\mu+\epsilon'/2+\alpha/2}
|\zeta-z|^{2n-1-2\mu}}dV(\zeta)\\
&\lesssim |z-w|^{\alpha}
 \int_{D}\frac{1}{\gamma^{\epsilon}(\zeta)}\frac{1}{|\zeta-z|^{2n-1+\epsilon'+\alpha}}dV(\zeta)\\
&\lesssim |z-w|^{\alpha}.
\end{align*}
We are now done with integral $I$.

 For integral $II$ above we again break the integral into regions
$|\zeta-z|\le|\zeta-w|$ and $|\zeta-w|\le|\zeta-z|$, and we only
consider the region $|\zeta-z|\le|\zeta-w|$, the other case being
handled similarly.

We
 write
\begin{multline*}
\left(P(\zeta,z)^{1/2}\right)^{2n-1-2\mu}
 -\left(P(\zeta,w)^{1/2}\right)^{2n-1-2\mu}
  =\\
  \sum_{l=0}^{2n-2\mu-2} \left(P(\zeta,z)^{1/2}\right)^{2n-2-2\mu-l}
\left(P(\zeta,w)^{1/2}\right)^{l}
\left(P(\zeta,z)^{1/2}-P(\zeta,w)^{1/2}\right),
\end{multline*}
and use
\begin{align*}
\left|P(\zeta,z)^{1/2}-P(\zeta,w)^{1/2}\right|=
 &\frac{|P(\zeta,z)-P(\zeta,w)|}{P(\zeta,z)^{1/2}+P(\zeta,w)^{1/2}}\\
&
 \lesssim
 \frac{|\zeta-z|+\frac{|r(\zeta)|}{\gamma(\zeta)}}{|\zeta-z|}|z-w|\\
&\lesssim
 \frac{|\zeta-w|+\frac{|r(w)|}{\gamma(w)}}{|\zeta-z|}|z-w|
 ,
\end{align*}
which follows from Lemma \ref{c1diff}.

We thus estimate
\begin{align*}
&\int_{D\atop|\zeta-z|\le|\zeta-w|}
  \frac{\gamma(w)^{2-\epsilon'}}{\gamma^{\epsilon}(\zeta)} \left|
\frac{P(\zeta,z)^{n-1/2-\mu}-P(\zeta,w)^{n-1/2-\mu}}
 {(\phi(\zeta,w))^{\mu+1}P(\zeta,z)^{n-1/2-\mu}P(\zeta,w)^{n-1/2-\mu}
 }
 \right|dV(\zeta)\\
 \lesssim&
\sum_{l=0}^{2n-2\mu-2}
 \int_{D\atop|\zeta-z|\le|\zeta-w|}
 \frac{\frac{\gamma(w)^{2-\epsilon'}}{\gamma^{\epsilon}(\zeta)}|z-w|
 \left(|\zeta-z|+\frac{|r(w)|}{\gamma(w)}\right)dV(\zeta)}
 {|\phi(\zeta,w)|^{\mu+1}\left(P(\zeta,z)^{1/2}\right)^{l+1}
 \left(P(\zeta,w)^{1/2}\right)^{2n-1-2\mu-l}|\zeta-z|}\\
\lesssim&
 \int_{D\atop|\zeta-z|\le|\zeta-w|}
\frac{\gamma(w)^{2-\epsilon'}}{\gamma^{\epsilon}(\zeta)}
 \frac{|z-w|}
 {|\phi(\zeta,w)|^{\mu+1}|\zeta-z|^{2n-2\mu}}dV(\zeta)\\
 &+\int_{D\atop|\zeta-z|\le|\zeta-w|}
\frac{\gamma(w)^{1-\epsilon'}}{\gamma^{\epsilon}(\zeta)}
 \frac{|r(w)||z-w|}
 {|\phi(\zeta,w)|^{\mu+1}|\zeta-z|^{2n+1-2\mu}}dV(\zeta)\\
 =&II_a+II_b.
\end{align*}

For $II_a$, we break the integral into the regions $E_c(z)$ and
its complement. We first consider
\begin{align}
\label{ivnec} \int_{D\setminus E_c\atop|\zeta-z|\le|\zeta-w|}&
\frac{\gamma(w)^{2-\epsilon'}}{\gamma^{\epsilon}(\zeta)}
 \frac{|z-w|}
 {|\phi(\zeta,w)|^{\mu+1}|\zeta-z|^{2n-2\mu}}dV(\zeta)\\
 \nonumber
& \lesssim |z-w|^{\alpha} \int_{D\setminus
E_c\atop|\zeta-z|\le|\zeta-w|}\frac{1}{\gamma^{\epsilon}(\zeta)}
 \frac{1}
 {|\phi(\zeta,w)|^{\mu-1/2+\alpha/2+\epsilon'/2}|\zeta-z|^{2n-2\mu}}dV(\zeta)\\
 \nonumber
&\lesssim |z-w|^{\alpha}
\int_{D}\frac{1}{\gamma^{\epsilon}(\zeta)}
 \frac{1}{|\zeta-z|^{2n-1+\alpha+\epsilon'}} dV(\zeta)\\
 \nonumber
&\lesssim |z-w|^{\alpha},
\end{align}
where we use $\gamma(w)\lesssim |\zeta-w|$ and the estimates for
(\ref{minec}).

We then bound the integral $II_a$ over the region $E_c(z)$ by
considering the different cases $\gamma(w)\le\gamma(z)$ and
$\gamma(z)\le\gamma(w)$.  In the case $\gamma(w)\le\gamma(z)$, we
use a coordinate system, $s, t_1,\ldots, t_{2n-1}$, in which
$s=-r(\zeta)$, and using the estimate
\begin{equation}
 \label{vest1}
dV(\zeta)\lesssim
 \frac{t^{2n-2}}{\gamma(z)}dsdt
,
\end{equation} we
  have
\begin{align}
 \label{3cwe}
&\int_{D\cap E_c\atop|\zeta-z|\le|\zeta-w|}
 \frac{\gamma(w)^{2-\epsilon'}}{\gamma^{\epsilon}(\zeta)}
 \frac{|z-w|}
 {|\phi(\zeta,w)|^{\mu+1}|\zeta-z|^{2n-2\mu}}dV(\zeta)\\
\nonumber
 &\qquad\lesssim
 \frac{|z-w|^{1/2+\alpha}}{|r(w)|^{1/2}}\gamma(w)
 \int_{D\cap E_c\atop|\zeta-z|\le|\zeta-w|}\gamma(z)^{1-\epsilon'}
 \frac{dV(\zeta)}
 {|\phi(\zeta,w)|^{\mu+1/4+\alpha/2}|\zeta-z|^{2n-2\mu+\epsilon}}\\
\nonumber
 &\qquad\lesssim
 \frac{|z-w|^{1/2+\alpha}}{|r(w)|^{1/2}}\gamma(w) \int_{V}
 \frac{t^{2\mu-2-\epsilon-\epsilon'}}
 {(s+t^2)^{\mu+1/4+\alpha/2}}dsdt\\
\nonumber
 &\qquad\lesssim
 \frac{|z-w|^{1/2+\alpha}}{|r(w)|^{1/2}}\gamma(w) \int_{V}
 \frac{1}
 {s^{7/8}t^{3/4+\alpha+\epsilon+\epsilon'}}dsdt\\
\nonumber
 &\qquad\lesssim
 \frac{|z-w|^{1/2+\alpha}}{|r(w)|^{1/2}}\gamma(w).
\end{align}
In the case $\gamma(z)\le\gamma(w)$, we estimate as above
\begin{align*}
&\int_{D\cap E_c\atop|\zeta-z|\le|\zeta-w|}
\frac{\gamma(w)^{2-\epsilon'}}{\gamma^{\epsilon}(\zeta)}
 \frac{|z-w|}
 {|\phi(\zeta,w)|^{\mu+1}|\zeta-z|^{2n-2\mu}}dV(\zeta)\\
&\qquad\lesssim
 \frac{|z-w|^{1/2+\alpha}}{|r(w)|^{1/2}}\gamma(w)
 \int_{D\cap E_c\atop|\zeta-z|\le|\zeta-w|}\gamma(w)
 \frac{dV(\zeta)}
 {|\phi(\zeta,w)|^{\mu+1/4+\alpha/2}|\zeta-z|^{2n-2\mu+\epsilon+\epsilon'}}\\
&\qquad\lesssim
 \frac{|z-w|^{1/2+\alpha}}{|r(w)|^{1/2}}\gamma(w)
 \int_{D\cap E_c\atop|\zeta-z|\le|\zeta-w|}
 \frac{(\gamma(z)+|\zeta-w|)dV(\zeta)}
 {|\phi(\zeta,w)|^{\mu+1/4+\alpha/2}|\zeta-z|^{2n-2\mu+\epsilon+\epsilon'}}.
\end{align*}
The integral involving $\gamma(z)$ is estimated exactly as above.
We thus have to deal with
\begin{equation*}
\frac{|z-w|^{1/2+\alpha}}{|r(w)|^{1/2}}\gamma(w)
 \int_{D\cap E_c\atop|\zeta-z|\le|\zeta-w|}
 \frac{|\zeta-w|dV(\zeta)}
 {|\phi(\zeta,w)|^{\mu+1/4+\alpha/2}|\zeta-z|^{2n-2\mu+\epsilon+\epsilon'}},
\end{equation*}
which we estimate using the coordinates $s,t_1,\ldots,t_{2n-1}$
above by
\begin{align*}
\frac{|z-w|^{1/2+\alpha}}{|r(w)|^{1/2}}&\gamma(w)
 \int_{D\cap E_c\atop|\zeta-z|\le|\zeta-w|}
 \frac{|\zeta-w|dV(\zeta)}
 {|\phi(\zeta,w)|^{\mu+1/4+\alpha/2}|\zeta-z|^{2n-2\mu+\epsilon+\epsilon'}}\\
&\lesssim \frac{|z-w|^{1/2+\alpha}}{|r(w)|^{1/2}}\gamma(w)
 \int_{V}
 \frac{t^{2n-2}dsdt}
 {(s+t^2)^{\mu-1/4+\alpha/2}(s+t)^{2n-2\mu+1+\epsilon+\epsilon'}}\\
&\lesssim \frac{|z-w|^{1/2+\alpha}}{|r(w)|^{1/2}}\gamma(w)
 \int_V \frac{1}{s^{3/4+\alpha/2+\epsilon+\epsilon'+\delta}t^{1-\delta}}
 dsdt\\
&\lesssim \frac{|z-w|^{1/2+\alpha}}{|r(w)|^{1/2}}\gamma(w),
\end{align*}
where $0<\delta<1/4-(\alpha/2+\epsilon+\epsilon')$.

For $II_b$ we first estimate
\begin{align*}
\int_{D\setminus
E_c\atop|\zeta-z|\le|\zeta-w|}&\frac{\gamma(w)^{1-\epsilon'}}{\gamma^{\epsilon}(\zeta)}
 \frac{|r(\zeta)||z-w|}
 {|\phi(\zeta,w)|^{\mu+1}|\zeta-z|^{2n+1-2\mu}}dV(\zeta)\\
&\lesssim |z-w|^{\alpha}
 \int_{D\setminus E_c\atop|\zeta-z|\le|\zeta-w|}
 \frac{1}{\gamma^{\epsilon}(\zeta)}\frac{|\zeta-w|^{2-\alpha-\epsilon'}}
 {|\phi(\zeta,w)|^{\mu}|\zeta-z|^{2n+1-2\mu}}dV(\zeta)\\
&\lesssim |z-w|^{\alpha}
 \int_{D}
 \frac{1}{\gamma^{\epsilon}(\zeta)}\frac{1}
 {|\zeta-z|^{2n-1+\alpha+\epsilon'}}dV(\zeta)\\
&\lesssim |z-w|^{\alpha},
\end{align*}
where $c$ is chosen as in Lemma \ref{schmalzlem} and we use
$\gamma(w)\lesssim |\zeta-w|$ on $D\setminus E_c(z)$.

We now finish the estimates for $II_b$.  We have
\begin{align}
 \nonumber
&\int_{D\cap E_c\atop|\zeta-z|\le|\zeta-w|}
 \frac{\gamma(w)^{1-\epsilon'}}{\gamma^{\epsilon}(\zeta)}
 \frac{|r(\zeta)||z-w|}
 {|\phi(\zeta,w)|^{\mu+1}|\zeta-z|^{2n+1-2\mu}}dV(\zeta)\\
\label{2bsep}
 &\qquad\lesssim |z-w|^{\alpha}
 \int_{D\cap E_c\atop|\zeta-z|\le|\zeta-w|}
\gamma(w)^{1-\epsilon'}
 \frac{1}
 {|\phi(\zeta,w)|^{\mu-1/2+\alpha/2}|\zeta-z|^{2n+1-2\mu+\epsilon}}dV(\zeta).
\end{align}
We again consider the different cases $\gamma(w)\le \gamma(z)$ and
$\gamma(z)\le \gamma(w)$ separately.
 With $\gamma(w)\le\gamma(z)$, we use coordinates $s,t_1,\ldots,t_{2n-1}$
as above with the volume estimate (\ref{vest1}) to estimate
(\ref{2bsep}) by
\begin{align*}
 |z-w|^{\alpha}
 \int_{V}&
 \frac{t^{2n-2}}
 {(s+t^2)^{\mu-1/2+\alpha/2}(s+t)^{2n+1-2\mu+\epsilon+\epsilon'}}dsdt\\
  &\lesssim |z-w|^{\alpha}
 \int_{V}
 \frac{1}
 {s^{1/2+\alpha/2+\epsilon+\epsilon'+\delta}t^{1-\delta}}dsdt\\
 &\lesssim |z-w|^{\alpha},
\end{align*}
where $0<\delta<1/2-(\alpha/2+\epsilon+\epsilon')$, and $V$ again
denotes a bounded subset of $\mathbb{R}^2$.

In the case $\gamma(z)\le\gamma(w)$, we write $\gamma(w)\lesssim
\gamma(z)+|\zeta-w|$, and estimate (\ref{2bsep}) by
\begin{equation*}
|z-w|^{\alpha}
 \int_{D\cap E_c\atop|\zeta-z|\le|\zeta-w|}
 \frac{\gamma(z)+|\zeta-w|}
 {|\phi(\zeta,w)|^{\mu-1/2+\alpha/2}|\zeta-z|^{2n+1-2\mu+\epsilon+\epsilon'}}
 dV(\zeta).
\end{equation*}
The integral involving $\gamma(z)$ is handled exactly as above, so
we estimate
\begin{align*}
|z-w|^{\alpha}
 &\int_{D\cap E_c\atop|\zeta-z|\le|\zeta-w|}
 \frac{|\zeta-w|}
 {|\phi(\zeta,w)|^{\mu-1/2+\alpha/2}|\zeta-z|^{2n+1-2\mu+\epsilon+\epsilon'}}
 dV(\zeta)\\
&\lesssim |z-w|^{\alpha}
 \int_{D\cap E_c\atop|\zeta-z|\le|\zeta-w|}
\frac{1}
 {|\phi(\zeta,w)|^{\mu-1+\alpha/2}|\zeta-z|^{2n+1-2\mu+\epsilon+\epsilon'}}
 dV(\zeta).
\end{align*}
The case of $\mu=1$ is trivial so we assume $\mu\ge2$ and using
the coordinates $s, t_1,\ldots,t_{2n-1}$, we estimate
\begin{align*}
|z-w|^{\alpha}
 \int_{V}&
 \frac{t^{2n-2}}
 {(s+t^2)^{\mu-1+\alpha/2}(s+t)^{2n+2-2\mu+\epsilon+\epsilon'}}dsdt\\
&\lesssim |z-w|^{\alpha}
 \int_{V}
 \frac{1}{s^{3/4+\alpha/2+\epsilon+\epsilon'}t^{1/2}} ds dt\\
&\lesssim |z-w|^{\alpha}.
\end{align*}
\\
$Case\ b)$.  $\lra$ is of double type $(1,2)$.

Following the arguments above we see we need to estimate
\begin{align*}
    &\int_{D}\frac{1}{\gamma(\zeta)^{1+\epsilon}}\left| \frac{
  \gamma(z)^{2-\epsilon'}(\phi(\zeta,w))^{\mu+1}-
 \gamma(w)^{2-\epsilon'}(\phi(\zeta,z))^{\mu+1}}{
  (\phi(\zeta,w))^{\mu+1}(\phi(\zeta,z))^{\mu+1}
  P(\zeta,z)^{n-1-\mu}}
  \right|dV(\zeta)\\
&\quad+\int_{D}
\frac{\gamma(w)^{2-\epsilon'}}{\gamma(\zeta)^{1+\epsilon}}\left|
\frac{P(\zeta,z)^{n-1-\mu}-P(\zeta,w)^{n-1-\mu}}
 {(\phi(\zeta,w))^{\mu+1}P(\zeta,z)^{n-1-\mu}P(\zeta,w)^{n-1-\mu}
 }
 \right|dV(\zeta)\\
 &\quad= III+IV.
\end{align*}

Following the calculations for integral $I$ in case $a)$ we
estimate integral $III$ by the integrals
\begin{align*}
\sum_{l=0}^{\mu}\int_{D}\frac{\gamma(z)^{2-\epsilon'}}{\gamma(\zeta)^{\epsilon}}&
\frac{|z-w|}{|\phi(\zeta,z)|^{\mu+1-l}|\phi(\zeta,w)|^{l+1}
|\zeta-z|^{2n-2-2\mu}}dV(\zeta)\\
&\quad+
\sum_{l=0}^{\mu}\int_{D}\frac{\gamma(z)^{2-\epsilon'}}{\gamma(\zeta)^{1+\epsilon}}
\frac{|z-w|}{|\phi(\zeta,z)|^{\mu+1-l}|\phi(\zeta,w)|^{l+1}
|\zeta-z|^{2n-3-2\mu}}dV(\zeta)\\
   &\quad+
 \int_{D}\frac{1}{\gamma(\zeta)^{1+\epsilon}}
\frac{|\gamma(z)^{2-\epsilon'}-\gamma(w)^{2-\epsilon'}|}
 {|\phi(\zeta,w)|^{\mu+1}
|\zeta-z|^{2n-2-2\mu}}dV(\zeta)\\
 &=III_a+III_b+III_c.
\end{align*}
Estimates for the integral $III_a$ are given by $I_b$ in case
$a)$.

For the integrals of $III_b$, we consider separately the regions
$E_c(z)$ and its complement.  We also only consider the case
$|\zeta-z|\le|\zeta-w|$.

In the region $D\cap E_c(z)$, we use a coordinate system in which
$s=-r(\zeta)$ is a coordinate, and we use the estimate on the
volume element in $E_c(z)$ given by (\ref{vest1}).  We can also
assume that $c$ is sufficiently small to guarantee that
$|\zeta-z|\lesssim\gamma(\zeta)$ in $E_c$.

The integrals
\begin{equation*}
\int_{D\cap E_c\atop |\zeta-z|\le|\zeta-w|}
\frac{\gamma(z)^{2-\epsilon'}}{\gamma(\zeta)^{1+\epsilon}}
\frac{|z-w|}{|\phi(\zeta,z)|^{\mu+1-l}|\phi(\zeta,w)|^{l+1}
|\zeta-z|^{2n-3-2\mu}}dV(\zeta)
\end{equation*}
can thus be bounded by
\begin{align*}
\nonumber
&\frac{|z-w|^{1/2+\alpha}}{|r(z)|^{1/2}}\gamma(z)\times\\
&\int_{V}\frac{|\zeta-w|^{1/2-\alpha}}
 {(s+|\zeta-z|^2)^{\mu+1/2-l}(s+|\zeta-w|^2)^{l+1}|\zeta-z|^{2n-2-2\mu+\epsilon+\epsilon'}}
 t^{2n-2}dsdt\\
\nonumber
 &\lesssim\frac{|z-w|^{1/2+\alpha}}{|r(z)|^{1/2}}\gamma(z)
 \int_{V}\frac{t^{2n-2}}
 {(s+|\zeta-z|^2)^{\mu+5/4+\alpha/2}|\zeta-z|^{2n-2-2\mu+\epsilon+\epsilon'}}
 dsdt\\
 \nonumber
&\lesssim\frac{|z-w|^{1/2+\alpha}}{|r(z)|^{1/2}}\gamma(z)
 \int_V\frac{t^{2\mu-\epsilon-\epsilon'}}{(s+t^2)^{\mu+5/4+\alpha/2}}dsdt\\
 \nonumber
&\lesssim\frac{|z-w|^{1/2+\alpha}}{|r(z)|^{1/2}}\gamma(z)
 \int_V\frac{1}{s^{7/8}t^{3/4+\alpha+\epsilon+\epsilon'}}dsdt\\
\nonumber
 &\lesssim\frac{|z-w|^{1/2+\alpha}}{|r(z)|^{1/2}}\gamma(z)
,
\end{align*}
where $V$ is a bounded subset of $\mathbb{R}^2$.

We now estimate
\begin{align*}
\int_{D\setminus E_c\atop
|\zeta-z|\le|\zeta-w|}&\frac{\gamma(z)^{2-\epsilon'}}{\gamma(\zeta)^{1+\epsilon}}
\frac{|z-w|}{|\phi(\zeta,z)|^{\mu+1-l}|\phi(\zeta,w)|^{l+1}
|\zeta-z|^{2n-3-2\mu}}dV(\zeta)\\
&\lesssim
 |z-w|^{\alpha}\int_{D\setminus E_c\atop |\zeta-z|\le|\zeta-w|}
\frac{1}{\gamma(\zeta)^{1+\epsilon}}\frac{1}{|\phi(\zeta,w)|^{l+1/2+\alpha/2}
|\zeta-z|^{2n-3-2l+\epsilon'}}dV(\zeta).
\end{align*}
We use coordinates
 $u_{j_1},\ldots,u_{j_m},v_{j_{m+1}},\ldots,v_{j_{2n}}$ as in
 (\ref{rcoor}) and the neighborhoods $U_{2\varepsilon}(p_j)$
 defined above.
We break the problem into subcases depending on whether $z\in
U_{\varepsilon}$.

 Subcase $a)$.  Suppose $z\in U_{\varepsilon}(p_j)$.  As we did above above
 define $w_1,\ldots,w_{2n}$
by
 \begin{equation*}
 w_{\alpha}=\begin{cases}
 u_{j_{\alpha}}\quad \mbox{for } 1\le
\alpha\le m\\
 v_{j_{\alpha}} \quad \mbox{for } m+1\le\alpha\le 2n,
\end{cases}
\end{equation*}
and let $x_1,\ldots,x_{2n}$ be defined by
$\zeta_{\alpha}=x_{\alpha}+ix_{n+\alpha}$.
  Recall that we have
$ |w(\zeta)|\lesssim |\zeta-z| $ and
$|w(\zeta)|\lesssim\gamma(\zeta)$.  Thus we estimate,
 using the
coordinates above,
\begin{align}
 \label{3cznote}
 |z-w|^{\alpha}\int_{D\setminus E_c\atop |\zeta-z|\le|\zeta-w|}&
\frac{1}{\gamma(\zeta)^{1+\epsilon}}\frac{1}{|\phi(\zeta,w)|^{l+1/2+\alpha/2}
|\zeta-z|^{2n-3-2l+\epsilon'}}dV(\zeta)\\
\nonumber
 &\lesssim |z-w|^{\alpha}
 \int_{D\setminus E_c\atop |\zeta-z|\le|\zeta-w|}
 \frac{1}{\gamma(\zeta)^{1+\epsilon}}
 \frac{1}{|\zeta-z|^{2n-2+\alpha+\epsilon'}}dV(\zeta)\\
\nonumber
 &\lesssim |z-w|^{\alpha}
 \int_V \frac{u^{m-1}v^{2n-m-1}}{(u+v)^{2n-1+\alpha+\epsilon+\epsilon'}}
 dudv\\
\nonumber
 &\lesssim |z-w|^{\alpha}
 \int_V \frac{1}{u^{1/2}v^{1/2+\alpha+\epsilon+\epsilon'}}
 dudv\\
\nonumber
 &\lesssim |z-w|^{\alpha},
\end{align}
where we use $u=\sqrt{u_{j_1}^2+\cdots+u_{j_m}^2}$,
$v=\sqrt{v_{j_{m+1}}^2+\cdots+v_{j_{2n}}^2}$, and $V$ is a bounded
set.

 Subcase $b)$.  Suppose $z\notin U_{\varepsilon}$.
 We have $|\zeta-z|\gtrsim \gamma(z)$, but
$\gamma(z)$ is bounded from below, since $z\notin
U_{\varepsilon}$.  We therefore have to estimate
\begin{equation*}
\int_{D}\frac{1}{\gamma(\zeta)^{1+\epsilon}}dV(\zeta),
\end{equation*}
which is easily done by working with the coordinates
$w_1,\ldots,w_{2n}$ above.

 We now estimate integral $III_c$.  We
use
\begin{equation*}
|\gamma(z)^{2-\epsilon'}-\gamma(w)^{2-\epsilon'}|
 \lesssim |z-w|\left(
 \gamma(z)^{1-\epsilon'}+\gamma(w)^{1-\epsilon'}\right)
\end{equation*}
 to write
\begin{equation*}
 \label{3c}
 III_c\lesssim
 \int_{D\atop |\zeta-z|\le|\zeta-w|}
\frac{\gamma(z)^{1-\epsilon'}+\gamma(w)^{1-\epsilon'}}
 {\gamma(\zeta)^{1+\epsilon}}
\frac{|z-w|}{|\phi(\zeta,w)|^{\mu+1}|\zeta-z|^{2n-2-2\mu}}dV(\zeta).
\end{equation*}
We first assume $\gamma(w)\le\gamma(z)$.  Then we estimate
\begin{equation}
 \label{zbig}
 \int_{D\atop |\zeta-z|\le|\zeta-w|}
\frac{\gamma(z)^{1-\epsilon'}}
 {\gamma(\zeta)^{1+\epsilon}}
\frac{|z-w|}{|\phi(\zeta,w)|^{\mu+1}|\zeta-z|^{2n-2-2\mu}}dV(\zeta).
\end{equation}
 by breaking the integral into the regions $E_c$ and
$D\setminus E_c$.  In $E_c$, again assuming $c$ is sufficiently
small so that $|\zeta-z|\lesssim \gamma(\zeta)$, (\ref{zbig}) is
bounded by
\begin{equation*}
 \int_{D\atop |\zeta-z|\le|\zeta-w|}
\gamma(z)^{1-\epsilon'}
\frac{|z-w|}{|\phi(\zeta,w)|^{\mu+1}|\zeta-z|^{2n-1-2\mu+\epsilon}}dV(\zeta),
\end{equation*}
which we showed to be bounded by $|z-w|^{\alpha}$ in (\ref{3czb}).
 In the region $D\setminus E_c$, we estimate
\begin{align*}
  \int_{D\setminus E_c\atop |\zeta-z|\le|\zeta-w|}&
\frac{\gamma(z)^{1-\epsilon'}}
 {\gamma(\zeta)^{1+\epsilon}}
\frac{|z-w|}{|\phi(\zeta,w)|^{\mu+1}|\zeta-z|^{2n-2-2\mu}}dV(\zeta)\\
 &\lesssim
|z-w|^{\alpha}\int_{D\setminus E_c\atop |\zeta-z|\le|\zeta-w|}
\frac{1}
 {\gamma(\zeta)^{1+\epsilon}}
\frac{1}{|\phi(\zeta,w)|^{\mu+1/2+\alpha/2}|\zeta-z|^{2n-3-2\mu+\epsilon'}}
 dV(\zeta)\\
&\lesssim |z-w|^{\alpha},
\end{align*}
where the last line follows from (\ref{3cznote}) above.

We therefore now consider the case $\gamma(z)\le\gamma(w)$ so that
\begin{equation*}
III_c\lesssim
 \int_{D\atop |\zeta-z|\le|\zeta-w|}
\frac{\gamma(w)^{1-\epsilon'}}
 {\gamma(\zeta)^{1+\epsilon}}
\frac{|z-w|}{|\phi(\zeta,w)|^{\mu+1}|\zeta-z|^{2n-2-2\mu}}dV(\zeta).
\end{equation*}
In the region $E_c$ we estimate
\begin{align*}
&\int_{D\cap E_c\atop |\zeta-z|\le|\zeta-w|}
\frac{\gamma(w)^{1-\epsilon'}}
 {\gamma(\zeta)^{1+\epsilon}}
\frac{|z-w|}{|\phi(\zeta,w)|^{\mu+1}|\zeta-z|^{2n-2-2\mu}}dV(\zeta)\\
&\qquad\lesssim
 \int_{D\cap E_c\atop |\zeta-z|\le|\zeta-w|}
\gamma(w)
\frac{|z-w|}{|\phi(\zeta,w)|^{\mu+1}|\zeta-z|^{2n-1-2\mu+\epsilon+\epsilon'}}dV(\zeta)\\
&\qquad\lesssim
 \frac{|z-w|^{1/2+\alpha}}{|r(w)|^{1/2}}\gamma(w)
\int_{D\cap E_c\atop |\zeta-z|\le|\zeta-w|}
 \frac{1}{|\phi(\zeta,w)|^{\mu+1/4+\alpha/2}
 |\zeta-z|^{2n-1-2\mu+\epsilon+\epsilon'}}dV(\zeta).
\end{align*}
Using the coordinate system $s=-r(\zeta)$, $t_1\ldots,t_{2n-2}$
with volume estimate (\ref{vest1}) as above we can estimate
\begin{equation*}
\frac{|z-w|^{1/2+\alpha}}{|r(w)|^{1/2}}\gamma(w)
 \int_V \frac{t^{2n-2}}{(s+t^2)^{\mu+1/4+\alpha/2}
 t^{2n-2\mu+\epsilon+\epsilon'}} dsdt \lesssim
\frac{|z-w|^{1/2+\alpha}}{|r(w)|^{1/2}}\gamma(w)
\end{equation*}
by (\ref{3cwe}).

In the region $D\setminus E_c$, we use $\gamma(w)\lesssim
|\zeta-w|$ to estimate
\begin{align}
 \label{4anote}
\int_{D\setminus E_c\atop |\zeta-z|\le|\zeta-w|}&
\frac{\gamma(w)^{1-\epsilon'}}
 {\gamma(\zeta)^{1+\epsilon}}
\frac{|z-w|}{|\phi(\zeta,w)|^{\mu+1}|\zeta-z|^{2n-2-2\mu}}dV(\zeta)\\
\nonumber
 &\lesssim
 \int_{D\setminus E_c\atop |\zeta-z|\le|\zeta-w|}
\frac{1}{\gamma(\zeta)^{1+\epsilon}}
 \frac{|\zeta-w|^{1-\epsilon'}|z-w|}
 {|\phi(\zeta,w)|^{\mu+1}|\zeta-z|^{2n-2-2\mu}}dV(\zeta)\\
\nonumber
 &\lesssim
 |z-w|^{\alpha}
 \int_{D\setminus E_c\atop
|\zeta-z|\le|\zeta-w|} \frac{1}{\gamma(\zeta)^{1+\epsilon}}
 \frac{1}
 {|\phi(\zeta,w)|^{\mu+\epsilon'/2+\alpha/2}
 |\zeta-z|^{2n-2-2\mu}}dV(\zeta)\\
\nonumber
 &\lesssim
 |z-w|^{\alpha}
 \int_{D\setminus E_c\atop
|\zeta-z|\le|\zeta-w|} \frac{1}{\gamma(\zeta)^{1+\epsilon}}
 \frac{1}
 {
 |\zeta-z|^{2n-2+\epsilon'+\alpha}}dV(\zeta)\\
\nonumber
 &\lesssim
 |z-w|^{\alpha}
 \int_V \frac{u^{m-1}v^{2n-1-m}}{(u+v)^{2n-1+\epsilon+\epsilon'+\alpha}}
dudv\\
\nonumber &\lesssim
 |z-w|^{\alpha},
\end{align}
where the coordinates $u$ and $v$ are defined as in
(\ref{3cznote}), and where the last line follows from
(\ref{3cznote}).  We are now done estimating integral $III$ and we
turn to $IV$.

 As in case $a)$ for
integral $II$ we estimate $IV$ by the integrals
\begin{align*}
&
 \int_{D\atop|\zeta-z|\le|\zeta-w|}
\frac{\gamma(w)^{2-\epsilon'}}{\gamma(\zeta)^{1+\epsilon}}
 \frac{|z-w|}
 {|\phi(\zeta,w)|^{\mu+1}|\zeta-z|^{2n-1-2\mu}}dV(\zeta)\\
 &\qquad+\int_{D\atop|\zeta-z|\le|\zeta-w|}
 \frac{\gamma(w)^{2-\epsilon'}}{\gamma(\zeta)^{2+\epsilon}}
 \frac{|r(\zeta)||z-w|}
 {|\phi(\zeta,w)|^{\mu+1}|\zeta-z|^{2n-2\mu}}dV(\zeta)\\
 =&IV_a+IV_b.
\end{align*}

To estimate $IV_a$ we break the region of integration in $E_c$ and
$D\setminus E_c$.  In the region $D\setminus E_c$ we use
$\gamma(w)\lesssim |\zeta-w|$ and estimate
\begin{align*}
&\int_{D\setminus E_c\atop|\zeta-z|\le|\zeta-w|}
\frac{1}{\gamma(\zeta)^{1+\epsilon}}
 \frac{|z-w|}
 {|\phi(\zeta,w)|^{\mu+\epsilon'/2}|\zeta-z|^{2n-1-2\mu}}dV(\zeta)\\
&\qquad \lesssim |z-w|^{\alpha}
 \int_{D\setminus E_c\atop|\zeta-z|\le|\zeta-w|}
 \frac{1}{\gamma(\zeta)^{1+\epsilon}}
 \frac{1}
 {|\phi(\zeta,w)|^{\mu+\epsilon'/2-1/2+\alpha/2}
|\zeta-z|^{2n-1-2\mu}}dV(\zeta)\\
&\qquad \lesssim |z-w|^{\alpha}
 \int_{D\setminus E_c\atop|\zeta-z|\le|\zeta-w|}
 \frac{1}{\gamma(\zeta)^{1+\epsilon}}
 \frac{1}
 {|\zeta-z|^{2n-2-2\mu+\epsilon'+\alpha}}dV(\zeta)\\
&\qquad \lesssim |z-w|^{\alpha},
\end{align*}
where the last line follows from (\ref{4anote}).

In the region $E_c$ we consider the different cases
$\gamma(w)\le\gamma(z)$ and $\gamma(z)\le\gamma(w)$ separately. In
the case $\gamma(w)\le\gamma(z)$, we write
\begin{align*}
&\int_{D\cap E_c\atop|\zeta-z|\le|\zeta-w|}
\frac{\gamma(w)^{2-\epsilon'}}{\gamma(\zeta)^{1+\epsilon}}
 \frac{|z-w|}
 {|\phi(\zeta,w)|^{\mu+1}|\zeta-z|^{2n-1-2\mu}}dV(\zeta)\\
&\qquad \lesssim \gamma(w)\int_{D\cap
E_c\atop|\zeta-z|\le|\zeta-w|}
\frac{\gamma(z)}{\gamma(\zeta)^{1+\epsilon}}
 \frac{|z-w|}
 {|\phi(\zeta,w)|^{\mu+1}|\zeta-z|^{2n-1-2\mu+\epsilon'}}dV(\zeta)\\
&\qquad \lesssim \frac{|z-w|^{1/2+\alpha}}{|r(w)|^{1/2}}\gamma(w)
 \int_{D\cap
E_c\atop|\zeta-z|\le|\zeta-w|} \gamma(z)
 \frac{1}
 {|\phi(\zeta,w)|^{\mu+1/4+\alpha/2}
 |\zeta-z|^{2n-2\mu+\epsilon+\epsilon'}}dV(\zeta)
\end{align*}
and we choose a coordinate system in which
 $s=-r(\zeta)$ and we use the estimate
 on the volume element given by (\ref{vest1})
to reduce the estimate to
\begin{equation*}
 \frac{|z-w|^{1/2+\alpha}}{|r(w)|^{1/2}}\gamma(w)
 \int_{V}
 \frac{t^{2\mu-2-\epsilon-\epsilon'}}
 {(s+t^2)^{\mu+1/4+\alpha/2}}dsdt \lesssim
 \frac{|z-w|^{1/2+\alpha}}{|r(w)|^{1/2}}\gamma(w)
\end{equation*}
which follows from (\ref{3cwe}).

In the case $\gamma(z)\le \gamma(w)$ we have
\begin{multline*}
\int_{D\cap E_c\atop|\zeta-z|\le|\zeta-w|}
\frac{\gamma(w)^{2-\epsilon'}}{\gamma(\zeta)^{1+\epsilon}}
 \frac{|z-w|}
 {|\phi(\zeta,w)|^{\mu+1}|\zeta-z|^{2n-1-2\mu}}dV(\zeta)\\
\lesssim
 \frac{|z-w|^{1/2+\alpha}}{|r(w)|^{1/2}}\gamma(w)
\int_{D\cap E_c\atop|\zeta-z|\le|\zeta-w|} \gamma(w)
 \frac{1}
 {|\phi(\zeta,w)|^{\mu+1/4+\alpha/2}|\zeta-z|^{2n-2\mu+\epsilon+\epsilon'}}dV(\zeta).
\end{multline*}
We then write $\gamma(w)\lesssim \gamma(z)+|\zeta-w|$, and we
bound
\begin{multline*}
\frac{|z-w|^{1/2+\alpha}}{|r(w)|^{1/2}}\gamma(w) \int_{D\cap
E_c\atop|\zeta-z|\le|\zeta-w|} \gamma(z)
 \frac{1}
 {|\phi(\zeta,w)|^{\mu+1/4+\alpha/2}|\zeta-z|^{2n-2\mu+\epsilon+\epsilon'}}dV(\zeta)
\\
 \lesssim \frac{|z-w|^{1/2+\alpha}}{|r(w)|^{1/2}}\gamma(w)
\end{multline*}
by (\ref{4ae}) and then consider
\begin{equation}
 \label{high}
\frac{|z-w|^{1/2+\alpha}}{|r(w)|^{1/2}}\gamma(w) \int_{D\cap
E_c\atop|\zeta-z|\le|\zeta-w|}
 \frac{1}
 {|\phi(\zeta,w)|^{\mu-1/4+\alpha/2}|\zeta-z|^{2n-2\mu+\epsilon+\epsilon'}}dV(\zeta).
\end{equation}
The case $\mu=1$ is trivial so we assume $\mu\ge2$ in which case
we use coordinates $s=-r(\zeta),t_1,\ldots,t_{2n-1}$ and bound
(\ref{high}) by
\begin{align*}
\frac{|z-w|^{1/2+\alpha}}{|r(w)|^{1/2}}\gamma(w) \int_V&
\frac{t^{2\mu-3-\epsilon-\epsilon'}}
 {(s+t^2)^{\mu-1/4+\alpha/2}}dsdt\\
 &\lesssim
 \frac{|z-w|^{1/2+\alpha}}{|r(w)|^{1/2}}\gamma(w)
 \int_V \frac{1}{s^{7/8}t^{3/4+\alpha+\epsilon+\epsilon'}} ds dt\\
&\lesssim
 \frac{|z-w|^{1/2+\alpha}}{|r(w)|^{1/2}}\gamma(w).
\end{align*}

To estimate $IV_b$ we use
 \begin{equation*}
\frac{|r(\zeta)|}{\gamma(\zeta)^2} \lesssim 1,
\end{equation*}
which follows by working in the coordinates of (\ref{rcoor}) near
a critical point, and thus
 we have
\begin{equation}
 \label{4bred}
IV_b\lesssim
 \int_{D\atop|\zeta-z|\le|\zeta-w|}
 \frac{\gamma(w)^{2-\epsilon'}}{\gamma(\zeta)^{\epsilon}}
 \frac{|z-w|}
 {|\phi(\zeta,w)|^{\mu+1}|\zeta-z|^{2n-2\mu}}dV(\zeta).
\end{equation}
We break the regions of integration in (\ref{4bred}) into
 $E_c$ and $D\setminus E_c$.  The estimates for $IV_b$ in the region
 $E_c$ are handled in the manner as was done for $IV_a$.
 In the region $D\setminus E_c$ we
 use $\gamma(w)\lesssim |\zeta-w|$ to bound (\ref{4bred}) by
\begin{align*}
\int_{D\setminus E_c \atop|\zeta-z|\le|\zeta-w|}&
\frac{\gamma(w)^{2-\epsilon'}}{\gamma(\zeta)^{\epsilon}}
 \frac{|z-w|}
 {|\phi(\zeta,w)|^{\mu+1}|\zeta-z|^{2n-2\mu}}dV(\zeta)\\
&\lesssim |z-w|^{\alpha}
 \int_{D\setminus E_c
\atop|\zeta-z|\le|\zeta-w|} \frac{1}{\gamma(\zeta)^{\epsilon}}
\frac{1}
 {|\phi(\zeta,w)|^{\mu-1/2+\epsilon'/2+\alpha/2}
 |\zeta-z|^{2n-2\mu}}dV(\zeta)\\
&\lesssim |z-w|^{\alpha}
 \int_{D\setminus E_c
\atop|\zeta-z|\le|\zeta-w|} \frac{1}{\gamma(\zeta)^{\epsilon}}
 \frac{1}
 {
 |\zeta-z|^{2n-1+\epsilon'+\alpha}}dV(\zeta)\\
&\lesssim |z-w|^{\alpha}.
\end{align*}

$ii)$. For $T^z$ a smooth first order tangential differential
operator on $D$, with respect to the $z$ variable, we have
\begin{align*}
T^zr&=0\\
T^zr^{\ast}&=\lre_{0,0}r\\
T^zP&=\lre_{1,0}+\lre_{0,0}\frac{r}
 {\gamma}\frac{r^{\ast}}{(\gamma^{\ast})^2}\\
&=\lre_{1,0} + \frac{\lre_{0,0}}{\gamma^{\ast}}(P+\lre_{2,0})\\
 T^z\phi&=\lre_{0,1}+\lre_{1,0}.
\end{align*}

We consider first the case in which the kernel of $A$ is of double
type $(1,3)$, of the form $\lra_{(3)}(\zeta,z)$,
  where the
subscript $(3)$ refers to the smooth type.

  Thus we write
\begin{equation}
 \label{typeaftertander}
\gamma^{\ast}T^z\lra_{(3)}= \gamma^{\ast}
\lra_{(1)}\gamma+\gamma^{\ast}\lra_{(2)} +\lra_{(3)},
\end{equation}
and estimate integrals involving the various forms the integral
kernels of different types assume.

We insert (\ref{typeaftertander}) into
\begin{equation*}
 \gamma^{\ast}TA_{(3)}f=\int_D f(\zeta)\gamma^{\ast}T^z
 \lra_{(3)}(\zeta,z)dV(\zeta)
\end{equation*}
 and we change
the factors of $\gamma^{\ast}$ through the equality
$\gamma(z)=\gamma(\zeta)+\lre_{1,0}$.  $ii)$ will then follow in
this case by the estimates
\begin{align}
\nonumber & \int_D
\frac{\gamma^{\epsilon'}(z)}{\gamma^{\epsilon}(\zeta)}
 |\lra_{(1)}(\zeta,z)|dV(\zeta)\lesssim
\frac{1}{|r(z)|^{\delta}}\\
\nonumber
 & \int_D
\frac{\gamma^{\epsilon'}(z)}{\gamma^{1+\epsilon}(\zeta)}
|\lra_{(2)}(\zeta,z)|dV(\zeta)\lesssim
\frac{1}{|r(z)|^{\delta}}\\
\label{smth3}
 & \int_D
\frac{\gamma^{\epsilon'}(z)}{\gamma^{2+\epsilon}(\zeta)}
|\lra_{(3)}(\zeta,z)|dV(\zeta)\lesssim \frac{1}{|r(z)|^{\delta}}.
\end{align}

We will prove the case of (\ref{smth3}) in which $\lra_{(3)}$
satisfies
\begin{equation*}
|\lra_{(3)}| \lesssim \frac{1}{P^{n-3/2-\mu}|\phi|^{\mu+1}} \qquad
\mu\ge 1.
\end{equation*}
The other cases are handled similarly.

Using the notation from $i)$ above, we choose coordinates
$u_{j_1},\ldots,u_{j_m},v_{j_{m+1}},\ldots,v_{j_{2n}}$ such
 that
 \begin{equation*}
 -r(\zeta)=u_{j_1}^2+\cdots+u_{j_m}^2-v_{j_{m+1}}^2-\cdots -
 v_{j_{2n}}^2,
 \end{equation*}
and let $U_{\varepsilon}=\bigcup_{j=1}^k U_{\varepsilon}(p_j)$.
We break the problem into subcases depending on whether $z\in
U_{\varepsilon}$.

Subcase $a)$.  Suppose $z\in U_{\varepsilon}(p_j)$.  We estimate
\begin{equation}
\label{Ujep} \int_{U_{2\varepsilon}(p_j)}
\frac{\gamma^{\epsilon'}(z)}{\gamma^{2+\epsilon}(\zeta)}
\frac{1}{|\phi|^{\mu+1}P^{n-3/2-\mu}} dV(\zeta)
\end{equation}
and
\begin{equation}
\label{Dminus}
 \int_{D_{\epsilon}\setminus U_{2\varepsilon}}
\frac{\gamma^{\epsilon'}(z)}{\gamma^{2+\epsilon}(\zeta)}
\frac{1}{|\phi|^{\mu+1}P^{n-3/2-\mu}} dV(\zeta).
\end{equation}

We break up the integral in (\ref{Ujep}) into integrals over
$E_c(z)$ and its complement, where $c$ is as in Lemma
\ref{schmalzlem}.  We also choose $c<1$ so that we also have the
estimate $|\zeta-z|\lesssim \gamma(\zeta)$.

We set $\theta=-r(z)$.

In the case $U_{2\varepsilon}(p_j)\cap E_c(z)$, we use a
coordinate system, $s=-r(\zeta)$, $t_1,\ldots,t_{2n-1}$, and
estimate
\begin{align*}
 \int_{ U_{2\varepsilon}(p_j)\cap E_c(z)}
 &\frac{\gamma^{\epsilon'}(z)}{\gamma^{2+\epsilon}(\zeta)}
\frac{1}{|\phi|^{\mu+1}P^{n-3/2-\mu}}
dV(\zeta)\\
\nonumber
 &\qquad
 \lesssim
 \int_{V}\frac{t^{2n-2}}
{\gamma^{1-\epsilon'}(z)(\theta+s+t^2)^{\mu+1}(s+t)^{2n-1-2\mu+\epsilon}}dsdt\\
 \nonumber
&\qquad
 \lesssim
 \int_{V}\frac{t^{2\mu-2+\epsilon'-\epsilon}}{(\theta+s+t^2)^{\mu+1}}dsdt\\
\nonumber
 &\qquad\lesssim
\frac{1}{\theta^{\delta}} \int_{V} \frac{t^{2\mu-2+\epsilon'-\epsilon}}{(s+t^2)^{\mu+1-\delta}} dsdt\\
 \nonumber
 &\qquad\lesssim\frac{1}{\theta^{\delta}}
 \int_0^M\frac{1}{s^{3/2-\delta}}ds\int_0^{\infty}
 \frac{\tilde{t}^{2\mu-2+\epsilon'-\epsilon}}{(1+\tilde{t}^2)^{\mu+1-\delta}}d\tilde{t}\\
 &\qquad\lesssim
  \frac{1}{\theta^{\delta}},
\end{align*}
where $M>0$ is some constant, and we make the substitution
$t=s^{1/2}\tilde{t}$.

We now estimate the integral
\begin{equation}
 \label{ujepset}
\int_{ U_{2\varepsilon}(p_j)\setminus E_c(z)}
\frac{\gamma^{\epsilon'}(z)}{\gamma^{2+\epsilon}(\zeta)}
\frac{1}{|\phi|^{\mu+1}P^{n-3/2-\mu}} dV(\zeta).
\end{equation}

Defining $u=\sqrt{u_{j_1}^2+\cdots+u_{j_m}^2}$,
$v=\sqrt{v_{j_{m+1}}^2+\cdots+v_{j_{2n}}^2}$, and using the
estimates from above
\begin{align*}
&|w(\zeta)|\lesssim |\zeta-z|\\
&|w(\zeta)|\lesssim \gamma(\zeta),
\end{align*}
where $w(\zeta)$ is defined as in (\ref{defnw}),
 we can bound the integral in
(\ref{ujepset}) by
\begin{align}
\label{inec}
 \int_{U_{2\varepsilon}(p_j)\setminus E_c(z)}
 &\frac{\gamma^{\epsilon'}(z)}{\gamma^{2+\epsilon}(\zeta)}
\frac{1}{|\phi|^{\mu+1}P^{n-3/2-\mu}}
dV(\zeta)\\
\nonumber
 &\qquad\lesssim
\int_V \frac{u^{m-1}v^{2n-m-1}}{
 (u+v)^{2n-1+\epsilon-\epsilon'}(\theta+u^2+v^2)} dudv\\
 \nonumber
 &\qquad\lesssim
 \int_V \frac{1}{(u+v)^{1+\epsilon-\epsilon'}(\theta+u^2+v^2)} dudv\\
 \nonumber
&\qquad\lesssim \frac{1}{\theta^{\delta}}
 \int_V \frac{1}{(u+v)^{3-2\delta+\epsilon-\epsilon'}} dudv\\
 \nonumber
&\qquad\lesssim \frac{1}{\theta^{\delta}}
 ,
\end{align}
where $V$ is a bounded region.  We have therefore bounded
(\ref{Ujep}), and we turn now to (\ref{Dminus}).

In $D\setminus U_{2\varepsilon}$ we have that $|\zeta-z|$ and
$\gamma(\zeta)$ are bounded from below so
\begin{equation*}
\int_{D\setminus U_{2\varepsilon}}
\frac{\gamma^{\epsilon'}(z)}{\gamma^{2+\epsilon}(\zeta)}
\frac{1}{|\phi|^{\mu+1}P^{n-3/2-\mu}} dV(\zeta)
 \lesssim  1.
\end{equation*}
This finishes subcase $a)$.

Case $b)$.  Suppose $z\notin U_{\varepsilon}$.  We divide $D$ into
the regions $D\cap E_c(z)$ and $D\setminus E_c(z)$.

In $D\cap E_c(z)$ the same coordinates and estimates work here as
in establishing the estimates for the integral in (\ref{inec}).

In $D\setminus E_c(z)$ we have $|\zeta-z|\gtrsim \gamma(z)$, but
$\gamma(z)$ is bounded from below, since $z\notin
U_{\varepsilon}$.  We therefore have to estimate
\begin{equation*}
\int_{D}\frac{1}{\gamma^{2+\epsilon}(\zeta)}dV(\zeta),
\end{equation*}
which is easily done by working with the coordinates
$w_1,\ldots,w_{2n}$ above.

 $iii)$.
The proof of $iii)$ follows the same steps as those in
 the proof of $ii)$, and we leave the details to the reader.
\end{proof}

\begin{thrm}
\label{e1commute}
 Let $X$ be a smooth tangential vector field.  Then
\begin{equation*}
 \gamma^{\ast}X^{z}E_{1-2n} =
 -E_{1-2n}\tilde{X}^{\zeta}\gamma+E_{1-2n}^{(0)}+\sum_{\nu=1}^l
E_{1-2n}^{(\nu)}   ,
\end{equation*}
where $\tilde{X}$ is the adjoint of $X$ and the $E_{1-2n}^{(\nu)}$
are isotropic operators.
\end{thrm}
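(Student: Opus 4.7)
The plan is to follow the proof of Theorem \ref{commutator} essentially verbatim, with three structural simplifications. First, the kernel of $E_{1-2n}$ has the plain form $\lre_{m,0}(\zeta,z)/\rho^{2k}(\zeta,z)$ with $m-2k\ge 1-2n$; it carries no $\phi$, $P$, $r$, or $R_N$ factors, so there is no analog of the delicate $B_{\sigma}$ analysis that was needed to handle the operator $\gamma X^{\zeta}+\gamma^{\ast}X^{z}$ falling on $\phi^{t_1}$. Second, there are no ``commutator type'' constraints to preserve. Third, and most importantly, the statement does not require allowable vector fields $W_{\nu}^{\zeta}\gamma$ trailing the remainder operators, so the trick of writing $Y=\varphi[W_1,W_2]+W_3$ and integrating by parts a second time is unnecessary.

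Concretely, I would use a partition of unity and decompose $X$ on each boundary chart (where $\partial r=\gamma\omega^n$) as
\begin{equation*}
X=\sum_{j=1}^{n-1} a_j L_j+\sum_{j=1}^{n-1} b_j\overline{L}_j + aY+ brN,
\end{equation*}
and treat the two cases as in Theorem \ref{commutator}. In Case 1, for $X$ equal to one of $a_jL_j$, $b_j\overline{L}_j$ (with $j\le n-1$), or $aY$, write
\begin{equation*}
\gamma^{\ast}X^{z}\lre^i_{1-2n}
=-\gamma X^{\zeta}\lre^i_{1-2n}
+\bigl(\gamma X^{\zeta}+\gamma^{\ast}X^{z}\bigr)\lre^i_{1-2n},
\end{equation*}
and integrate by parts in the first term to produce $-E_{1-2n}\tilde{X}^{\zeta}\gamma$. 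For the second term the key point is that
\begin{equation*}
\bigl(\gamma X^{\zeta}+\gamma^{\ast}X^{z}\bigr)\lre_{m,0}\in \lre_{m,0},
\qquad
\bigl(\gamma X^{\zeta}+\gamma^{\ast}X^{z}\bigr)\rho^{2}\in \lre_{2,0},
\end{equation*}
which are the isotropic analogs of the relations (\ref{allreln}) and follow because each of $L_j$, $\overline{L}_j$, $Y$, weighted by $\gamma$, acts on a function of geodesic distance by losing at most one power of $|\zeta-z|$ while gaining a compensating $\gamma$ factor (using $\gamma^{\ast}=\gamma+\lre_{1,0}$ via Lemma \ref{c1diff}). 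Applying these to $\lre_{m,0}\rho^{-2k}$ via the product/quotient rule produces a sum of terms $\lre_{m',0}\rho^{-2k'}$ with $m'-2k'\ge 1-2n$, i.e.\ admissible $E_{1-2n}$ kernels, which contribute to $E_{1-2n}^{(0)}+\sum_{\nu}E_{1-2n}^{(\nu)}$.

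For Case 2 ($X=\lre_0\, rN$) I would proceed exactly as in the proof of Theorem \ref{commutator}, using the isotropic analog of (\ref{case2reln}):
\begin{equation*}
\bigl(r\gamma N^{\zeta}+r^{\ast}\gamma^{\ast}N^{z}\bigr)\lre_{m,0}\in \lre_{m,0},
\qquad
\bigl(r\gamma N^{\zeta}+r^{\ast}\gamma^{\ast}N^{z}\bigr)\rho^{2}\in \lre_{2,0},
\end{equation*}
the latter because $r$ and $r^{\ast}$ already vanish on the boundary, so acting by $rN$ on the smooth function $\rho^2$ loses a power of $|\zeta-z|$ but gains the needed weight. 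The integration by parts produces $-E_{1-2n}\tilde{N}^{\zeta}(\lre_0 r\gamma\,\cdot)$, and $\lre_0 r\tilde{N}^{\zeta}$ is a smooth tangential derivative, which after expanding $\tilde{N}^{\zeta}(\lre_0 r\gamma f)=\lre_{0,0}f+\lre_0 r\tilde{N}^{\zeta}\gamma f$ yields the desired form. The main obstacle, and the only real content beyond bookkeeping, is verifying the two relations above: one must check carefully that the extra factor of $\gamma$ (respectively $r\gamma$) in front of the $\zeta$-derivative always absorbs the non-smooth coefficient of $X^{\zeta}$ in local coordinates near a critical point of $r$, so that the resulting kernel genuinely remains in the isotropic class with index $\ge 1-2n$ rather than sliding into a worse weighted class.
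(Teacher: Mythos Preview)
Your proposal is correct and follows essentially the same approach as the paper. The paper's own proof is a two-line remark: it says to follow the line of argument of Case 1 of Theorem \ref{commutator} using the single relation $(\gamma X^{\zeta}+\gamma^{\ast}X^{z})\lre_{1-2n}^i=\lre_{1-2n}^i$, which is exactly the isotropic identity you isolate. Your write-up is more detailed in that you explicitly treat the $brN$ component (Case 2), which the paper's terse reference to ``case 1)'' leaves implicit; this is harmless and arguably more complete, since a general smooth tangential $X$ does contain an $rN$ piece.
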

\begin{proof}
The proof follows the line of argument used in proving case $1)$
of Theorem \ref{commutator}, and makes use of
 $(\gamma X^{\zeta}+\gamma^{\ast}X^{z})\lre_{1-2n}^i=\lre_{1-2n}^i$.
\end{proof}

\begin{thrm}
\label{E1properties}
 Let $T$ be a smooth tangential vector field.
 Set $E$ to be an operator with kernel of the form
 $\lre^i_{1-2n}(\zeta,z)R_1(\zeta)$ or $\lre^i_{2-2n}(\zeta,z)$ .
Then we have the following properties:
\begin{align*}
&i)\ E_{1-2n}:L^p(D)\rightarrow L^s(D)\\
&ii)\ E:L^{\infty,2+\epsilon,0}(D)\rightarrow
 \Lambda_{\alpha,2-\epsilon'}(D)
\qquad 0<\epsilon,\epsilon', \quad \alpha+\epsilon+\epsilon'<1\\
&iii)\ \gamma^{\ast}TE:\Lambda_{\alpha,2+\epsilon}(D)
\rightarrow L^{\infty,\epsilon',0}(D) \qquad \epsilon<\epsilon'\\
&iv)\ E:L^{\infty,\epsilon,\delta}(D)\rightarrow
 L^{\infty,\epsilon',0}(D) \qquad \epsilon<\epsilon',
 \quad \delta<1/2 +(\epsilon'-\epsilon)/2
\end{align*}
for any $1\le p\le s\le\infty$ with $1/s>1/p-1/2n$.
\end{thrm}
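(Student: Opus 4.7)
The plan is to exploit the isotropic nature of the kernels: because $\lre^i_{1-2n}$ carries the explicit size bound $|\zeta-z|^{1-2n}$ with no $\phi$-type singularity, the arguments are much shorter than those in Theorem \ref{dertype1} and the Hölder exponent can be pushed up to $1$ rather than $1/4$. Part $i)$ is the classical fractional-integration estimate: the pointwise bound $|\lre_{1-2n}(\zeta,z)| \lesssim |\zeta-z|^{1-2n}$ gives convolution with a locally integrable kernel of order $1-2n$ on the bounded domain $D$, whence the claimed $L^p \to L^s$ mapping by the generalized Young inequality.

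For $ii)$, I would mirror the structure of Case $a)$ in the proof of Theorem \ref{dertype1}. Write
\[
|Ef(z) - Ef(w)| \lesssim \|f\|_{L^{\infty,2+\epsilon,0}} \int_D \frac{|K(\zeta,z) - K(\zeta,w)|}{\gamma^{2+\epsilon}(\zeta)}\, dV(\zeta),
\]
split into $\{|\zeta-z|\le|\zeta-w|\}$ and its symmetric counterpart, and estimate the kernel difference via the mean-value theorem, trading one factor of $|z-w|$ for one factor of $|\zeta-z|^{-1}$. After multiplying by the required weight $\gamma^{2-\epsilon'}(z)$ and using $\gamma(z)\lesssim \gamma(\zeta) + |\zeta-z|$ in $E_c(z)$ as well as the Morse coordinates (\ref{rcoor}) near critical points of $r$, the integrals reduce to ones of the same shape handled in the proof of Theorem \ref{dertype1}. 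The resulting bound is of the form $|z-w|^\alpha + |z-w|^{1/2+\alpha}/|r|^{1/2}$, to which Schmalz's Lemma \ref{schmlemma} applies. Because there is no $\phi$ in the kernel, no $1/2$-power is lost in the $s_2$ integration, so any $\alpha + \epsilon + \epsilon' < 1$ is admissible.

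For $iii)$, Theorem \ref{e1commute}, together with its routine extension to kernels carrying an extra $R_1$ factor, lets us write
\[
\gamma^{\ast} T^z E = -E\,\tilde T^{\zeta}\gamma + E^{(0)} + \sum_{\nu=1}^{l} E^{(\nu)},
\]
where the $E^{(\nu)}$ are isotropic of the same order. The main term to control is $E\,\tilde T^{\zeta}(\gamma f)$. Since $f \in \Lambda_{\alpha,2+\epsilon}$ gives only Hölder regularity, I would not integrate by parts onto $f$ but instead use the kernel side: subtracting $(\gamma f)(z)$ from $(\gamma f)(\zeta)$ under the integral sign exploits the isotropy of the kernel so that the Hölder difference
\[
|(\gamma f)(\zeta) - (\gamma f)(z)| \lesssim \|f\|_{\Lambda_{\alpha,2+\epsilon}}\,|\zeta-z|^{\alpha}/\gamma^{1+\epsilon}(\zeta)
\]
cancels enough of the $|\zeta-z|^{1-2n}$ singularity to produce a convergent integral, with any residual $|r(z)|^{-\delta}$ absorbed by the gain $\gamma^{\epsilon'-\epsilon}(z)$. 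Part $iv)$ is then a direct hybrid of $ii)$ and the tracking of $|r|^{\delta}$ weights from part $iii)$ of Theorem \ref{dertype1}, with the relation $\delta < 1/2 + (\epsilon'-\epsilon)/2$ arising precisely from trading the $|r|^{1/2}$ factor in Schmalz's lemma against the weight gain.

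The main obstacle is the management of the Hölder norm in part $iii)$: Theorem \ref{e1commute} is stated for $E_{1-2n}$ without the $R_1$ factor, so one must verify that the additional $R_1$ (behaving like $\gamma$) can be passed through without introducing kernels of worse type, and the subtraction argument has to be carried out carefully enough that the Hölder gain $|\zeta-z|^{\alpha}$ remains effective near the critical points of $r$ where $\gamma$ vanishes.
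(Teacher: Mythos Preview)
Your treatment of parts $i)$, $ii)$, and $iv)$ is essentially what the paper does: $i)$ is quoted from \cite{LiMi}, $ii)$ is declared to follow the proof of Theorem~\ref{dertype1}~$i)$, and $iv)$ is declared to follow Theorem~\ref{dertype1}~$iii)$, exactly as you outline.

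For part $iii)$ there is a genuine gap. The hypothesis $f\in\Lambda_{\alpha,2+\epsilon}$ means $\gamma^{2+\epsilon}f\in\Lambda_\alpha$; it does \emph{not} give a H\"older bound on $\gamma f$. Indeed $|\gamma f|\lesssim 1/\gamma^{1+\epsilon}$ is the best pointwise control available, so the difference $(\gamma f)(\zeta)-(\gamma f)(z)$ is not even bounded near critical points, and your displayed estimate
\[
|(\gamma f)(\zeta)-(\gamma f)(z)|\lesssim \|f\|_{\Lambda_{\alpha,2+\epsilon}}\,|\zeta-z|^{\alpha}/\gamma^{1+\epsilon}(\zeta)
\]
fails (take $\gamma(\zeta)\sim 1$, $\gamma(z)\to 0$ with $|\zeta-z|\sim 1$). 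Relatedly, invoking Theorem~\ref{e1commute} as an operator identity \emph{before} subtracting is backwards: that identity is obtained by integration by parts and produces $E\tilde T^{\zeta}(\gamma f)$, which is undefined for merely H\"older $f$. The paper reverses the order: it first writes
\[
(\gamma^{\ast})^{1+\epsilon'}TEf=\int (\gamma^{\ast})^{\epsilon'}\bigl(\gamma^{2+\epsilon}f(\zeta)-(\gamma^{\ast})^{2+\epsilon}f(z)\bigr)\frac{\gamma^{\ast}T^{z}\lre}{\gamma^{2+\epsilon}}\,dV
+(\gamma^{\ast})^{2+\epsilon}f(z)\int(\gamma^{\ast})^{\epsilon'}\frac{\gamma^{\ast}T^{z}\lre}{\gamma^{2+\epsilon}}\,dV,
\]
so the H\"older difference is taken on $\gamma^{2+\epsilon}f$, which \emph{is} in $\Lambda_\alpha$ by hypothesis, and the extra $\gamma^{-(2+\epsilon)}$ sits on the kernel side where it can be handled with the Morse coordinates. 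Theorem~\ref{e1commute} is then applied only to the second integral, which involves no $f(\zeta)$ at all and is a pure kernel computation. Once you make this correction the rest of your outline goes through.
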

\begin{proof}
 $i)$ is presented in \cite{LiMi}.

 The proof of $ii)$ follows that of Theorem \ref{dertype1} $i)$.

For $iii)$ we let $\lre(\zeta,z)$ be the kernel of $E$, and we
calculate
\begin{align}
\nonumber
 (\gamma^{\ast})^{1+\epsilon'}TE  f =&\int_{D} f(\zeta)
\gamma^{\ast}T^z
 \lre(\zeta,z) dV(\zeta)\\
\nonumber
 =&\int_{D} (\gamma^{\ast})^{\epsilon'}\gamma^{2+\epsilon}f(\zeta)
\frac{\gamma^{\ast}T^z
 \lre(\zeta,z)}{\gamma^{2+\epsilon}} dV(\zeta)\\
\label{comint}
 =& \int_{D}  (\gamma^{\ast})^{\epsilon'}(\gamma^{2+\epsilon}f(\zeta)-(\gamma^{\ast})^{2+\epsilon}f(z))
\frac{\gamma^{\ast}T^z
 \lre(\zeta,z)}{\gamma^{2+\epsilon}}  dV(\zeta)\\
\nonumber
 &+(\gamma^{\ast})^{2+\epsilon}f(z)\int_{D}(\gamma^{\ast})^{\epsilon'}
 \frac{\gamma^{\ast}T^z\lre(\zeta,z)}{\gamma^{2+\epsilon}}
 dV(\zeta).
\end{align}
We use Theorem \ref{e1commute}
 in the last
integral to bound the last term of (\ref{comint}) by
\begin{align*}
(\gamma^{\ast})^{2+\epsilon}f(z)
 \int_{D} \lre_{1-2n,0} (\gamma^{\ast})^{\epsilon'}
 \left(\frac{1}{\gamma^{1+\epsilon}}+
\frac{\lre_{1,0}}{\gamma^{2+\epsilon}}
 \right)
 dV(\zeta)
  &\lesssim (\gamma^{\ast})^{2+\epsilon} |f(z)|\\
  &\lesssim \|  f \|_{L^{\infty,2+\epsilon,0}},\\
  &\lesssim \|  f \|_{\Lambda_{\alpha,2+\epsilon}},
\end{align*}
where the first inequality can be proved by breaking the integrals
into the regions $U_{2\varepsilon}$ and $D\setminus
U_{2\varepsilon}$ and in the region $D\setminus U_{2\varepsilon}$
using the same coordinates as in the proof of Theorem
\ref{dertype1} $ii)$.

For the first integral in (\ref{comint}), we
 note if $f\in\Lambda_{\alpha}$ then $\gamma^{2+\epsilon}f\in\Lambda_{\alpha}$.  We
have
\begin{align*}
 \label{ta1}
\int_{D} (\gamma^{\ast})^{\epsilon'}\Bigg|
\big(\gamma^{2+\epsilon}f(\zeta)-(\gamma^{\ast})^{2+\epsilon}&f(z)\big)
\frac{\gamma^{\ast} T^z
 \lre(\zeta,z)}{\gamma^{2+\epsilon}} \Bigg| dV(\zeta)\\
\nonumber
 &\lesssim \|\gamma^{2+\epsilon}f\|_{\Lambda_{\alpha}}
 \int_{D} |\zeta-z|^{\alpha}(\gamma^{\ast})^{\epsilon'}
 \left|\frac{\gamma^{\ast} T^z
 \lre(\zeta,z)}{\gamma^{2+\epsilon}}\right| dV(\zeta)\\
\nonumber &\lesssim  \|\gamma^{2+\epsilon}f\|_{\Lambda_{\alpha}}.
\end{align*}

The proof of $iv)$ follow as in the case of Theorem \ref{dertype1}
$iii)$.
\end{proof}

\section{ $C^k$ estimates}
\label{ck}

We define $Z_1$ operators to be those which take the form
\begin{equation*}
Z_1=A_{(1,1)}+E_{1-2n}\circ\gamma,
\end{equation*}
and we write Theorem \ref{basicintrep} as
\begin{equation}
\label{z1cal}
 \gamma^3 f =
 Z_1\gamma^2\mdbar f +Z_1\gamma^2\mdbar^{\ast} f
  +Z_1 f.
\end{equation}

We define $Z_j$ operators to be those operators of the form
\begin{equation*}
 Z_j=\overbrace{Z_1\circ\cdots\circ Z_1}^{j\mbox{ times}}.
\end{equation*}
  We establish
mapping properties for $Z_j$ operators
\begin{lemma}
\label{weightedz}
 For $1<p<\infty$ and $j\ge 1$
\begin{equation}
\label{jnorm}
 \| Z_j f\|_p\lesssim \|  f\|_{L^{p,jp}},
\end{equation}
and  for $0<\epsilon'<\epsilon$
 \begin{equation}
\label{inftynorm}
 \| Z_j f\|_{L^{\infty,\epsilon,0}}\lesssim \|
f\|_{L^{\infty,j+\epsilon',0}}.
\end{equation}
\end{lemma}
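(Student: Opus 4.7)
The natural approach is induction on $j$, using the decomposition
\[
Z_1 = A_{(1,1)} + E_{1-2n}\circ\gamma
\]
together with the mapping properties of admissible and isotropic operators already established. For the base case $j=1$ of (\ref{jnorm}), I would use that the kernel of $A_{(1,1)}$, by virtue of being of double type $(1,1)$, carries a factor of $\gamma(\zeta)^2$ at the $\zeta$-slot (this is visible in the pointwise bounds used in the proof of Theorem \ref{dertype1}). Writing $A_{(1,1)}f = A_1(\gamma^2 f)$ for an operator $A_1$ of pure type $1$, Proposition \ref{yngcor} (with $s=p$) gives $\|A_{(1,1)} f\|_p \lesssim \|\gamma^2 f\|_p \lesssim \|f\|_{L^{p,p}}$. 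The second summand is handled by Theorem \ref{E1properties}(i), yielding $\|E_{1-2n}(\gamma f)\|_p \lesssim \|\gamma f\|_p = \|f\|_{L^{p,p}}$. For (\ref{inftynorm}) with $j=1$, the same decomposition applies, invoking instead the $L^{\infty}$-weighted bounds of Theorem \ref{dertype1}(iii) and Theorem \ref{E1properties}(iv); the strict inequality $\epsilon'<\epsilon$ provides the slack in parameters required by those theorems.

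For the inductive step, write $Z_{j+1}f = Z_1(Z_j f)$, so that the base case applied to $Z_j f$ gives
\[
\|Z_{j+1} f\|_p \lesssim \|Z_j f\|_{L^{p,p}}, \qquad \|Z_{j+1} f\|_{L^{\infty,\epsilon,0}} \lesssim \|Z_j f\|_{L^{\infty,1+\epsilon'',0}},
\]
for a suitable $\epsilon''$ with $\epsilon'<\epsilon''<\epsilon$. The $L^{\infty}$ claim then follows by iterating the base case $j$ times: each application of $Z_1$ consumes a unit of $\gamma$-weight on the source side and a small loss in the $\epsilon$-parameter, which is absorbed by the strict inequality $\epsilon'<\epsilon$ with room to spare. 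For (\ref{jnorm}) the inductive step reduces to the estimate $\|\gamma Z_j f\|_p \lesssim \|f\|_{L^{p,(j+1)p}}$. Here I would exploit that $\gamma^2 = |\partial r|^2$ is smooth and $r$ is Morse, whence $\gamma$ is Lipschitz on $\overline{D}$: $\gamma(z) \le \gamma(\zeta) + C|z-\zeta|$. Inserting this bound under the integral defining $Z_j f(z)$ splits the problem into two contributions: one in which the extra $\gamma(\zeta)$ is absorbed into the effective input, matching the inductive hypothesis at weight one higher; and one in which $|z-\zeta|$ multiplies the intermediate kernel, improving its type by one order, which therefore still maps $L^p \to L^p$ by Proposition \ref{yngcor} or Theorem \ref{E1properties}(i) and compensates for the missing power of $\gamma$.

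The principal obstacle is the bookkeeping: at each iterated application of $Z_1$ we must track simultaneously the accumulating $\gamma$-weights on the effective input and the evolving types of intermediate kernels produced by the Lipschitz splitting, and verify at every step that the resulting operators still satisfy the boundedness hypotheses of Proposition \ref{yngcor} and Theorem \ref{E1properties}. Once this bookkeeping is done cleanly, both (\ref{jnorm}) and (\ref{inftynorm}) drop out of the same inductive scheme.
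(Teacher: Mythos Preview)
Your base case has a genuine gap. You write $A_{(1,1)}f = A_1(\gamma^2 f)$ and claim the resulting $A_1$ is ``of pure type~$1$'' so that Proposition~\ref{yngcor} applies with $s=p$. But in the paper's conventions the \emph{type} of an admissible kernel already penalizes missing $\gamma$-factors: a double-type $(1,1)$ kernel has smooth type $1$ and $N+M+\alpha+\beta\ge 2$, and stripping out $\gamma(\zeta)^k$ drops this sum to $\ge 2-k$, giving type $1-\max\{0,k\}\le 0$. Proposition~\ref{yngcor} for a type-$0$ operator only yields $L^p\to L^s$ with $s<p$, never $s=p$; the Schur integral $\int \gamma(\zeta)^{-1}|\lra_{(1,1)}|\,dV(\zeta)$ is in fact logarithmically divergent as $r(z)\to 0$ (see the computation around \eqref{z1stcoor}), so no unweighted Young-type bound can close. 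The same issue recurs in your treatment of \eqref{inftynorm}: Theorem~\ref{dertype1}(iii) goes the wrong way for this purpose, since it requires the target $\gamma$-exponent to be \emph{larger} than the source exponent, whereas \eqref{inftynorm} asks $Z_1$ to lower the weight by one unit.

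The paper does not reduce to the existing operator bounds at all. It proves from scratch the weighted Schur-type estimate \eqref{aeproperty},
\[
\sup_{z}\int \frac{|\lra_{(1,1)}(\zeta,z)|}{\gamma(\zeta)}\,|r(\zeta)|^{-\delta}|r(z)|^{\delta}\,dV(\zeta)<\infty,\qquad \delta<1,
\]
by direct computation in the Schmalz coordinates on $E_c(z)$ and the Morse coordinates on its complement. The extra weight $|r(\zeta)|^{-\delta}|r(z)|^{\delta}$ is exactly what tames the borderline divergence, and the generalized Young's inequality then gives $L^p\to L^p$ for the kernel $\lra_{(1,1)}/\gamma(\zeta)$, i.e.\ $\|A_{(1,1)}f\|_p\lesssim\|\gamma f\|_p$. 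Your inductive scheme for $j\ge 2$ could be made to work once the correct base-case estimate is in hand, but the base case itself requires this new kernel bound, not Proposition~\ref{yngcor}.
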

\begin{proof}
 We prove (\ref{jnorm}) for kernels of the form
 $\lra_{(1,1)}(\zeta,z)$, where $\lra_{(1,1)}$ is a kernel of
  double type $(1,1)$.
 We show below that $\lra_{(1,1)}(\zeta,z)$ satisfies
\begin{equation}
\label{aeproperty}
 \sup_{z\in\Omega}\int \frac{1}{\gamma(\zeta)}
 |\lra_{(1,1)}(\zeta,z)|
 |r(\zeta)|^{-\delta}|r(z)|^{\delta}dV(\zeta) <\infty
\end{equation}
for $\delta<1$.  The lemma then follows from the generalized
Young's inequality.

We further restrict our proof to the cases in which $\lra_{(1,1)}$
satisfies
\begin{align*}
& i)\ \frac{1}{\gamma(\zeta)}|\lra_{(1,1)}| \lesssim
\gamma(\zeta)\frac{1}{P^{n-1/2-\mu}|\phi|^{\mu+1}}
\qquad \mu\ge 1 \\
& ii) \frac{1}{\gamma(\zeta)}|\lra_{(1,1)}| \lesssim
\frac{1}{P^{n-1-\mu}|\phi|^{\mu+1}} \qquad \mu\ge 1\\
& iii) \frac{1}{\gamma(\zeta)}|\lra_{(1,1)}| \lesssim
\frac{1}{\gamma(\zeta)}\frac{1}{P^{n-3/2-\mu}|\phi|^{\mu+1}}
\qquad \mu\ge 1
\end{align*}
We will prove the more difficult case $iii)$, as cases $i)$ and
$ii)$ follow similar arguments, and we leave the details of those
cases to the reader.

We use the same notation as in Theorem \ref{dertype1} $iii)$.  As
in Theorem \ref{dertype1} $iii)$ we divide the estimates into
subcases depending on whether $z\in U_{\varepsilon}$.

Subcase $a)$.  Suppose $z\in U_{\varepsilon}(p_j)$.  We estimate
\begin{equation}
\label{zUjep} \int_{U_{2\varepsilon}(p_j)}
\frac{1}{\gamma(\zeta)|\phi|^{\mu+1}P^{n-3/2-\mu}|r(\zeta)|^{\delta}}
dV(\zeta)
\end{equation}
and
\begin{equation}
\label{zDminus}
 \int_{D_{\epsilon}\setminus U_{2\varepsilon}}
\frac{1}{\gamma(\zeta)|\phi|^{\mu+1}P^{n-3/2-\mu}|r(\zeta)|^{\delta}}
dV(\zeta).
\end{equation}

We break up the integral in (\ref{zUjep}) into integrals over
$E_c(z)$ and its complement, where $c$ is as in Lemma
\ref{schmalzlem}, and we choose $c<1$.  Thus, in $E_c(z)$, we have
$|\zeta -z|\lesssim \gamma(\zeta)$.

In the case $U_{2\varepsilon}(p_j)\cap E_c(z)$, we use a
coordinate system, $s=-r(\zeta)$, $t_1,\ldots,t_{2n-1}$, and
estimate
\begin{align}
\label{z1stcoor}
 \int_{ U_{2\varepsilon}(p_j)\cap E_c(z)}
 &\frac{1}{\gamma(\zeta)|\phi|^{\mu+1}P^{n-3/2-\mu}|r(\zeta)|^{\delta}}
dV(\zeta)\\
\nonumber
 &\qquad
 \lesssim
 \int_{\mathbb{R}_+^2}\frac{t^{2n-2}}{\gamma(\zeta)\gamma(z)s^{\delta}(\theta+s+t^2)^{\mu+1}(s+t)^{2n-3-2\mu}}dsdt\\
 \nonumber
&\qquad
 \lesssim
 \int_{\mathbb{R}_+^2}\frac{t^{2\mu-1}}{s^{\delta}(\theta+s+t^2)^{\mu+1}}dsdt\\
\nonumber
 &\qquad\lesssim
 \int_{\mathbb{R}_+^2} \frac{1}{s^{\delta}(\theta^{1/2}+s^{1/2}+t)^{3}} dsdt\\
 \nonumber
 &\qquad\lesssim
 \int_0^{\infty} \frac{1}{s^{\delta}(\theta+s)} ds\\
 \nonumber
 &\qquad\lesssim \frac{1}{\theta^{\delta}},
\end{align}
where we use the notation
 $\mathbb{R}_+^j=\overbrace{\mathbb{R}_+\times\cdots\times\mathbb{R}_+}^{j\mbox{ times}}$.

We now estimate the integral
\begin{equation}
 \label{zujepset}
\int_{ U_{2\varepsilon}(p_j)\setminus E_c(z)}
\frac{1}{\gamma(\zeta)|\phi|^{\mu+1}P^{n-3/2-\mu}|r(\zeta)|^{\delta}}
dV(\zeta).
\end{equation}

Recall from above that with the coordinates
 $u_{j_1},\ldots,u_{j_m},v_{j_{m+1}},\ldots,v_{j_{2n}}$
 so that around the critical point, $p_j$ we have
\begin{equation*}
-r(\zeta)=u_{j_1}^2+\cdots+u_{j_m}^2-v_{j_{m+1}}^2-\cdots-v_{j_{2n}}^2
\end{equation*}
 and with $w_1,\ldots,w_{2n}$ defined by
 \begin{equation*}
 w_{\alpha}=\begin{cases}
 u_{j_{\alpha}}\quad \mbox{for } 1\le
\alpha\le m\\
 v_{j_{\alpha}} \quad \mbox{for } m+1\le\alpha\le 2n,
\end{cases}
\end{equation*}
 we have
$ |w(\zeta)|\lesssim|\zeta-z| $ and $|w(\zeta)|\lesssim
\gamma(\zeta)$ for $\zeta,z\in U_{2\varepsilon}(p_j)$.

We can therefore bound the integral in (\ref{zujepset}) by
\begin{align}
\nonumber
 \int_{U_{2\varepsilon}(p_j)\setminus E_c(z)}
 &\frac{1}{\gamma(\zeta)|\phi|^{\mu+1}P^{n-3/2-\mu}|r(\zeta)|^{\delta}}
dV(\zeta)\\
\nonumber
 &\qquad\lesssim
\int_V \frac{u^{m-1}v^{2n-m-1}}{
 (u+v)^{2n-2}(\theta+u^2+v^2)(u^2-v^2)^{\delta}} dudv\\
 \label{casedel}
 &\qquad\lesssim
 \int_V \frac{1}{(\theta+u^2)(u^2-v^2)^{\delta}} dudv,
\end{align}
where $V$ is a bounded region.  We make the substitution
$v=\tilde{v}u$, since $v^2<u^2$, and write (\ref{casedel}) as
\begin{align*}
\int_0^{M}\frac{1}{u^{2\delta-1}(\theta+u^2)}du
  \int_0^1\frac{1}{(1-\tilde{v}^2)^{\delta}}d\tilde{v}
   &\lesssim\frac{1}{\theta^{\delta}}\int_0^M
    \frac{1}{u^{2\delta-1}(1+u^2)}du\\
   &\lesssim\frac{1}{\theta^{\delta}},
\end{align*}
where $M>0$ is some constant.  We have therefore bounded
(\ref{zUjep}), and we turn now to (\ref{zDminus}).

In $D\setminus U_{2\varepsilon}$ we have that $|\zeta-z|$ and
$\gamma(\zeta)$ are bounded from below so
\begin{equation*}
\int_{D\setminus U_{2\varepsilon}}
\frac{1}{\gamma(\zeta)|\phi|^{\mu+1}P^{n-3/2-\mu}|r(\zeta)|^{\delta}}
dV(\zeta)
 \lesssim \int_{D\setminus U_{2\varepsilon}}
\frac{1}{|r(\zeta)|^{\delta}} dV(\zeta)\lesssim 1,
\end{equation*}
the last inequality following because in $D\setminus
U_{2\varepsilon}$ $r$ can be chosen as a coordinate since
$\gamma(\zeta)$ is bounded from below.  This finishes subcase
$a)$.

Subcase $b)$.  Suppose $z\notin U_{\varepsilon}$.  We divide $D$
into the regions $D\cap E_c(z)$ and $D\setminus E_c(z)$.

In $D\cap E_c(z)$ the same coordinates and estimates work here as
in establishing the estimates for the integral in
(\ref{z1stcoor}).

In $D\setminus E_c(z)$ we have $|\zeta-z|\gtrsim \gamma(z)$, but
$\gamma(z)$ is bounded from below, since $z\notin
U_{\varepsilon}$.  We therefore have to estimate
\begin{equation*}
\int_{D}\frac{1}{\gamma(\zeta)|r(\zeta)|^{\delta}}dV(\zeta),
\end{equation*}
which is easily done by working with the coordinates
$w_1,\ldots,w_{2n}$ above.

(\ref{inftynorm}) is proved similarly.
\end{proof}

\begin{lemma}
\label{A1property}
 Let $T$ be a tangential vector field and $\varepsilon>0$.  For
 $\epsilon>0$ sufficiently small
\begin{align*}
&i)\ Z_{n+2}:L^2(D)\rightarrow L^{\infty}(D) \qquad  \\
&ii)\ \| \gamma T Z_4 f\|_{C^{1/4-\varepsilon}} \lesssim \|
f\|_{L^{\infty,3+\epsilon,0}}
\end{align*}
\end{lemma}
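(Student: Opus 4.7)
For $(i)$, the plan is to iterate the $L^p$ mapping property of $Z_1=A_{(1,1)}+E_{1-2n}\circ\gamma$. Proposition~\ref{yngcor} gives $A_{(1,1)}\colon L^p(D)\to L^s(D)$ whenever $1/s>1/p-1/(2n+2)$, and Theorem~\ref{E1properties}~$(i)$ gives the same (indeed, a stronger) gain of $1/(2n)$ for the $E_{1-2n}$ piece. Hence each factor of $Z_1$ gains at least $1/(2n+2)$ in the reciprocal integrability exponent, and iterating $n+2$ times starting from $L^2(D)$ moves $1/s$ past $0$, so that $Z_{n+2}\colon L^2(D)\to L^\infty(D)$.

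For $(ii)$, write $Z_4=Z_1\circ Z_3$ and proceed in two stages. In the first stage, bound $Z_3 f$ using Lemma~\ref{weightedz}: applying (\ref{inftynorm}) with $j=3$ to the hypothesis $f\in L^{\infty,3+\epsilon,0}(D)$ yields $Z_3 f\in L^{\infty,\tilde\epsilon,0}(D)$ for any $\tilde\epsilon\in(0,\epsilon)$, and since $\gamma$ is bounded the inclusion $L^{\infty,\tilde\epsilon,0}(D)\subset L^{\infty,2+\tilde\epsilon,0}(D)$ holds trivially.

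In the second stage, move the derivative $\gamma T$ inside the outer $Z_1$ via the commutator formulas. Apply Theorem~\ref{commutator} to the $A_{(1,1)}$ component and Theorem~\ref{e1commute} to the $E_{1-2n}$ component; as in the proofs of those theorems, the derivatives $\tilde T^{\zeta}\gamma$ and $W_\nu^{\zeta}\gamma$ are absorbed into new admissible/isotropic kernels of commutator type~$\ge 1$ by integrating by parts through the relations (\ref{allreln}). This produces an identity of the form
\[
 \gamma T Z_4 f \;=\; \widetilde Z_1(Z_3 f),
\]
where $\widetilde Z_1$ is a finite sum of admissible operators of commutator type~$1$ together with isotropic operators of type $1-2n$. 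Now Theorem~\ref{dertype1}~$(i)$ (applied to the admissible part) and Theorem~\ref{E1properties}~$(ii)$ (applied to the isotropic part) give
\[
 \widetilde Z_1 \colon L^{\infty,2+\tilde\epsilon,0}(D)\longrightarrow \Lambda_{\alpha,\,2-\tilde\epsilon'}(D)
\]
for any $\alpha,\tilde\epsilon,\tilde\epsilon'>0$ with $\alpha+\tilde\epsilon+\tilde\epsilon'<1/4$. The weight $\gamma^{2-\tilde\epsilon'}$ on the output side is compensated by the explicit $\gamma$ in $\gamma T$ together with the factors of $\gamma$ introduced into the kernels of $\widetilde Z_1$ by the commutation step; choosing the small parameters appropriately gives $\alpha$ arbitrarily close to $1/4$ and produces the desired unweighted estimate
\[
 \|\gamma T Z_4 f\|_{C^{1/4-\varepsilon}}\lesssim \|f\|_{L^{\infty,3+\epsilon,0}} .
\]

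The principal obstacle is the precise bookkeeping of $\gamma$-weights through the commutation. In particular, Case~2 of Theorem~\ref{commutator}, corresponding to the normal-derivative component $brN$ in the decomposition of $T$, must be handled using the relations (\ref{case2reln}), so that the extra factor of $r$ there combines with the $r$-singularities of $\phi$ and $P$ to preserve the admissible type-$1$ structure. The choice of precisely four factors of $Z_1$ is forced by this balance: three of them are used to absorb the weight $\gamma^{3+\epsilon}$ of $f$ via Lemma~\ref{weightedz}, while the outer fourth layer carries the H\"older gain through Theorem~\ref{dertype1}~$(i)$.
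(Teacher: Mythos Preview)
Part $(i)$ is correct and matches the paper's argument.

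For part $(ii)$ there is a genuine gap. The commutator theorem (Theorem~\ref{commutator}) does \emph{not} yield an identity of the form $\gamma T Z_4 f = \widetilde Z_1(Z_3 f)$ with $\widetilde Z_1$ of type~$1$. What it gives is
\[
\gamma^\ast T^z Z_1(Z_3 f)
  = -A_1\bigl(\tilde X^\zeta \gamma\, Z_3 f\bigr)
    + A_1^{(0)}(Z_3 f)
    + \sum_\nu A_1^{(\nu)}\bigl(W_\nu^\zeta \gamma\, Z_3 f\bigr),
\]
and the first and last groups of terms still carry a first-order \emph{derivative} of $Z_3 f$. Your claim that these derivatives ``are absorbed into new admissible/isotropic kernels of commutator type~$\ge 1$ by integrating by parts through the relations~(\ref{allreln})'' is incorrect: the relations~(\ref{allreln}) concern the two-sided operator $\gamma X^\zeta+\gamma^\ast X^z$, which preserves type, whereas integrating the one-sided $\gamma \tilde X^\zeta$ (or $\gamma W_\nu^\zeta$) back onto the kernel drops the type of $A_1$ by one. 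So you cannot dispose of the derivative this way; it must be tracked.

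The paper deals with this by iterating the commutation twice, pushing the derivative two layers in, so that the principal terms have the shape $Z_1\circ Z_1\circ \gamma T A_1\circ Z_1$ and $Z_1\circ Z_1\circ \gamma T E\circ Z_1$. At that point the derivative is estimated \emph{directly} via Theorem~\ref{dertype1}~$(ii)$ (resp.\ Theorem~\ref{E1properties}~$(iii)$), which maps $L^{\infty,2+\epsilon,0}$ to $L^{\infty,\epsilon',\delta}$ (resp.\ $\Lambda_{\alpha,2+\epsilon}\to L^{\infty,\epsilon',0}$); the next $Z_1$ then removes the $r$-weight through Theorem~\ref{dertype1}~$(iii)$, and the outermost $Z_1$ supplies the H\"older gain via Theorem~\ref{dertype1}~$(i)$. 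You never invoke parts~$(ii)$ or~$(iii)$ of Theorem~\ref{dertype1}, and they are precisely the tools that handle the surviving derivative. Finally, your remark that the output weight $\gamma^{2-\tilde\epsilon'}$ is ``compensated by the explicit $\gamma$ in $\gamma T$'' does not hold: that factor of $\gamma^\ast$ is already consumed by the hypothesis of Theorem~\ref{commutator}, so nothing is left over to cancel the weight.
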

\begin{proof}
For $i)$ apply Corollary \ref{yngcor} and Theorem
\ref{E1properties} $i)$, $n+2$ times.

 For $ii)$ we let $\alpha<1/4$, and apply the commutator theorem, Theorem \ref{commutator},
  and consider the two
compositions
 $Z_1\circ Z_1\circ\gamma T A_1\circ Z_1$, and
$Z_1\circ Z_1\circ\gamma T E\circ Z_1$.    From Theorems
\ref{dertype1} and \ref{E1properties} we can find
$\epsilon_1,\ldots,\epsilon_4$ such that
$0<\epsilon_{j+1}<\epsilon_j$ and such that in
 the first case we have
\begin{multline*}
\|Z_1\circ Z_1\circ \gamma T A_1\circ Z_1
f\|_{\Lambda_{\alpha}}\lesssim
\|Z_1\circ \gamma T A_1\circ Z_1 f\|_{L^{\infty,\epsilon_1,0}}\lesssim\\
  \|\gamma
T A_1\circ Z_1 f\|_{L^{\infty,\epsilon_2,\delta}}\lesssim
 \|Z_1f\|_{L^{\infty,2+\epsilon_3,0}}\lesssim \|f\|_{L^{\infty,3+\epsilon_4,0}}
\end{multline*}
and, in the second,
\begin{multline*}
 \|Z_1\circ Z_1\circ \gamma T E\circ Z_1 f\|_{\Lambda_{\alpha}}\lesssim
\|Z_1\circ \gamma T E\circ Z_1
f\|_{L^{\infty,\epsilon_1,0}}\lesssim
\\ \|\gamma
T E\circ Z_1 f\|_{L^{\infty,1+\epsilon_2,0}}\lesssim
 \|Z_1f\|_{\Lambda_{\alpha,3+\epsilon_3}}\lesssim \|f\|_{L^{\infty,3+\epsilon_4,0}},
\end{multline*}
where the second and third inequalities are proved in the same way
as Theorem \ref{E1properties} $ii)$ and $iii)$.
\end{proof}

 We now iterate (\ref{z1cal}) to get
\begin{align}
\label{basiciterate}
 \gamma^{3j}f=&(Z_1\gamma^{3(j-1)+2}+Z_2\gamma^{3(j-2)+2}+
  \cdots+Z_{j}\gamma^2)\mdbar f \\
 \nonumber
  &+(Z_1\gamma^{3(j-1)+2}+Z_2\gamma^{3(j-2)+2}+
  \cdots+Z_{j}\gamma^2)\mdbar^{\ast}f+Z_{j}f.
\end{align}
Then we can prove
\begin{thrm} For $f\in L^2_{0,q}(D)\cap\mbox{Dom}(\mdbar)\cap
\mbox{Dom}(\mdbar^{\ast})$, $q\ge 1$, and $\varepsilon>0$
\label{k=0}
\begin{equation*}
\|\gamma^{3(n+3)}f\|_{C^{1/4-\varepsilon}}\lesssim
\|\gamma^2\mdbar
f\|_{\infty}+\|\gamma^2\mdbar^{\ast}f\|_{\infty}+\|f\|_2.
\end{equation*}
\end{thrm}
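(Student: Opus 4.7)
The plan is to apply the iteration (\ref{basiciterate}) with $j=n+3$, yielding
$$\gamma^{3(n+3)}f = \sum_{k=1}^{n+3} Z_k\,\gamma^{3(n+3-k)+2}\mdbar f + \sum_{k=1}^{n+3} Z_k\,\gamma^{3(n+3-k)+2}\mdbar^*f + Z_{n+3}f,$$
and then to estimate each resulting summand in $C^{1/4-\varepsilon}$. The common pattern for every term is a factorization $Z_k = Z_1 \circ Z_{k-1}$: the inner $k-1$ applications carry the argument into a well-behaved $L^\infty$-type space with a manageable weight, and the final application of $Z_1$ supplies Hölder regularity with exponent $\alpha<1/4$.

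For the term $Z_{n+3}f$, the input is only in $L^2$, so I would invoke Lemma \ref{A1property} $i)$ to pass $f$ through $Z_{n+2}$ into $L^\infty$, and then apply Theorem \ref{dertype1} $i)$ and Theorem \ref{E1properties} $ii)$ to the final $Z_1$ (recalling $Z_1 = A_{(1,1)} + E_{1-2n}\circ\gamma$) to produce a Hölder estimate controlled by $\|f\|_2$. For each term $Z_k\,\gamma^{3(n+3-k)+2}\mdbar f$ with $1\le k\le n+3$, I note that $\gamma^{3(n+3-k)+2}\mdbar f$ is bounded in $L^\infty$ by a constant multiple of $\|\gamma^2\mdbar f\|_\infty$, since $\gamma$ is bounded on $\overline{D}$ and $3(n+3-k)\ge 0$. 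Lemma \ref{weightedz} (\ref{inftynorm}) then carries this $L^\infty$ bound through $Z_{k-1}$ with an acceptable weight gain, after which the final $Z_1$ once more delivers the Hölder gain. The terms involving $\mdbar^* f$ are handled identically.

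The principal technical hurdle is bookkeeping the weight. The final application of $Z_1$ in each chain lands not in $C^\alpha$ outright but in the weighted Hölder space $\Lambda_{\alpha,2-\epsilon'}$ from Theorem \ref{dertype1} $i)$ and Theorem \ref{E1properties} $ii)$; that is, one obtains an estimate on $\gamma^{2-\epsilon'}$ times the output. The exponent $3(n+3)$ on the left-hand side is chosen precisely so that, after collecting and distributing the weights accumulated across $n+3$ iterations of (\ref{z1cal}) together with the weight $\gamma^{2-\epsilon'}$ from the final $Z_1$, the total balances. The conversion from the $\gamma(w)|z-w|^{1/2+\alpha}/|r(w)|^{1/2}$-type estimates that naturally arise in the kernel computations of Theorems \ref{dertype1} $i)$ and \ref{E1properties} $ii)$ into a pure $|z-w|^\alpha$ bound is exactly Schmalz's Lemma (Lemma \ref{schmlemma}), which is already embedded in the conclusions of those theorems. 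The remaining work is routine verification that, with $j=n+3$, the integrability gain from Proposition \ref{yngcor} and Theorem \ref{E1properties} $i)$ is sufficient in the $Z_{n+2}$ step to reach $L^\infty$ from $L^2$, and that the powers of $\gamma$ in $L^{\infty,\epsilon,\delta}$ spaces assembled across the chain never exceed what the stated exponent $3(n+3)$ absorbs.
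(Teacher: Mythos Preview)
Your proposal is correct and follows essentially the same approach as the paper: apply the iteration (\ref{basiciterate}) with $j=n+3$, use Lemma \ref{A1property} $i)$ on the $Z_{n+3}f$ term to pass from $L^2$ to $L^\infty$, and invoke Theorems \ref{dertype1} $i)$ and \ref{E1properties} $ii)$ for the final H\"older gain via the outermost $Z_1$. Your explicit use of Lemma \ref{weightedz} to propagate the $L^\infty$ control through the intermediate $Z_{k-1}$ factors on the $\mdbar f$ and $\mdbar^\ast f$ terms, as well as your remarks on weight bookkeeping, simply make precise what the paper's one-line proof leaves implicit.
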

\begin{proof}
Use Theorems \ref{dertype1} $i)$ and \ref{E1properties} $ii)$ and
Lemma \ref{A1property} $i)$ in (\ref{basiciterate}) with $j=n+3$
\end{proof}

We use the notation $D^k$ to denote a $k^{th}$ order differential
operator, which is a sum of terms which are composites of $k$
 vector fields.

 We define
\begin{equation*}
Q_k(f)=\sum_{j=0}^k\|\gamma^{j+2} D^j \mdbar
f\|_{\infty}+\sum_{j=0}^k\|\gamma^{j+2}
D^j\mdbar^{\ast}f\|_{\infty}+\|f\|_{2}.
\end{equation*}

$T^k$ will be used for  a $k$-th order tangential differential
operator, which is a sum of terms which are composites of $k$
 tangential vector fields.

\begin{lemma}
\label{tanglemma}
  Let $T^k$ be a tangential operator of order
$k$. For $\varepsilon, \epsilon >0$
\begin{equation*}
\|\gamma^{3(n+6)+8k+\epsilon}T^k f\|_{C^{1/4-\varepsilon}}\lesssim
Q_k(f).
\end{equation*}
\end{lemma}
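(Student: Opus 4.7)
The plan is to argue by induction on $k$. The base case $k=0$ is immediate from Theorem \ref{k=0} of Section \ref{ck}, which gives $\|\gamma^{3(n+3)}f\|_{C^{1/4-\varepsilon}}\lesssim Q_0(f)$; since $3(n+6)+\epsilon>3(n+3)$, there is enough room to absorb the extra factor $\gamma^{9+\epsilon}$ with the sup-norm bound on $f$.

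For the inductive step, fix a single smooth tangential vector field $T$ (so that $T^k=T\cdot T^{k-1}$ after a partition of unity), and apply the iterated representation (\ref{basiciterate}) at level $j=n+3$ to the form $f$, namely
\begin{equation*}
\gamma^{3(n+3)}f=\sum_{l=1}^{n+3} Z_l\bigl(\gamma^{3(n+3-l)+2}\mdbar f\bigr)+\sum_{l=1}^{n+3} Z_l\bigl(\gamma^{3(n+3-l)+2}\mdbar^{\ast}f\bigr)+Z_{n+3}f.
\end{equation*}
I would then differentiate both sides by $T^k$. On the left, Leibniz gives $T^k(\gamma^{3(n+3)}f)=\gamma^{3(n+3)}T^k f+\sum_{a=1}^k \binom{k}{a}(T^a\gamma^{3(n+3)})\,T^{k-a}f$, where the estimate $|T^a\gamma^{3(n+3)}|\lesssim\gamma^{3(n+3)-a}$ lets us absorb each correction term using the inductive hypothesis applied to $T^{k-a}f$ (the budget $8k$ on the weight leaves room since decreasing $k$ by $a$ only removes $8a$ but one gains $a$ from $\gamma^{3(n+3)-a}$, so the slack $7a$ suffices). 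On the right, I would commute $T^k$ past each $Z_l$ by iterated application of the commutator theorem (Theorem \ref{commutator}) on the $A_{(1,1)}$ factors and Theorem \ref{e1commute} on the $E_{1-2n}$ factors. Each single commutation converts $\gamma^{\ast}T^z Z_1=Z_1 \tilde{T}^{\zeta}\gamma+\text{commutator-type lower order}$, so after passing $T^k$ all the way through $Z_l$ we obtain $Z_l$ applied to an expression of the form $\tilde{T}^k(\gamma^{kl+3(n+3-l)+2}\mdbar f)$, modulo a controlled hierarchy of admissible operators in fewer derivatives of $\gamma\cdot\mdbar f$ (and analogously for $\mdbar^{\ast}f$).

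To close the estimate I would invoke the mapping properties of Section \ref{est}: Theorems \ref{dertype1} and \ref{E1properties} together with Lemmas \ref{weightedz} and \ref{A1property} show that a sufficiently long composition of $Z_1$ operators sends weighted $L^{\infty,m,0}$ into $C^{1/4-\varepsilon}$, with each commutator step through a $Z_1$ costing a bounded number (at most $8$) of additional $\gamma$-weights when one also accounts for the auxiliary $r^{-\delta}$-singularity introduced by $\gamma^{\ast}TA$ in Theorem \ref{dertype1}$ii)$ and its reabsorption via Theorem \ref{dertype1}$iii)$. By Leibniz, $T^k(\gamma^{kl+3(n+3-l)+2}\mdbar f)$ equals $\gamma^{kl+3(n+3-l)+2}T^k\mdbar f$ plus lower-order pieces, and the weighted $L^\infty$ norm of $\gamma^{k+2}T^k\mdbar f$ is exactly what $Q_k(f)$ controls. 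The remaining commutator-type terms of smaller order in $T$ are controlled either by $Q_{k-1}(f)\le Q_k(f)$ or, for those that output extra derivatives of $f$, by the inductive hypothesis. The main obstacle, and where most of the work will go, is the bookkeeping in this weight count: each of the $k$ crossings through each of the $l\le n+3$ factors of $Z_1$ produces a tower of auxiliary admissible operators acting on mixed Leibniz expansions of $\gamma^{\text{large}}\,\mdbar f$, and one must verify that in every branch the total $\gamma$-deficit is at most $3(n+6)+8k+\epsilon$ and that every resulting argument lies in the correct weighted space for the $C^{1/4-\varepsilon}$ mapping to apply; once that accounting is organized, each individual integral estimate reduces to the cases already carried out in the proofs of Theorems \ref{dertype1}, \ref{E1properties}, and Lemma \ref{A1property}.
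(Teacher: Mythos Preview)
Your induction scheme has a genuine gap in the handling of the error term $Z_{n+3}f$. When you apply $T^k$ to this term and push the derivatives through using Theorem~\ref{commutator} repeatedly, the top-order contribution is of the form $Z_{n+3}\bigl(\gamma^{M}\,\tilde D^{k}f\bigr)$ for some tangential/allowable $k$th-order operator $\tilde D^{k}$; the derivative count is \emph{not} reduced by the commutator theorem, only redistributed (the $A_1^{(0)}$ pieces drop a derivative, but the main pieces $-A_1\tilde X\gamma$ and $A_1^{(\nu)}W_\nu\gamma$ keep it). Since $Q_k(f)$ controls derivatives of $\mdbar f$ and $\mdbar^{\ast}f$ but only $\|f\|_2$ for $f$ itself, you have no bound on $\tilde D^{k}f$ in any space to which the $Z_{n+3}$ mapping properties apply. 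Your inductive hypothesis controls $\gamma^{N(k-1)}T^{k-1}f$ in $C^{1/4-\varepsilon}$, but that does not feed back into an estimate for $Z_{n+3}(\gamma^{M}\tilde D^{k}f)$ in the required norm.

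The paper circumvents this by two devices you are missing. First, it applies the iterated representation \emph{to the pre-weighted form} $\gamma^{3(n+2)+7k}f$ rather than to $f$, so the residual term is $Z_j\bigl(\gamma^{3(n+2)+7k}f\bigr)$. Second, and crucially, it only commutes a \emph{single} factor $\gamma T$ through $Z_j$: writing $(\gamma T)^k Z_j(\gamma^{3(n+2)+7k}f)=\gamma T\,Z_j\bigl(\gamma^{3(n+2)+8k-1}T^{k-1}f\bigr)+\text{lower order}$, the inner argument $\gamma^{3(n+2)+8k-1}T^{k-1}f$ is then controlled by the inductive hypothesis at level $k-1$, and the outer operator $\gamma T Z_j$ is bounded via a one-derivative mapping property (proved exactly as in Lemma~\ref{A1property}). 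This is why the paper first establishes an intermediate weighted $L^{\infty}$ bound (with $j=3$) and only afterwards upgrades to $C^{1/4-\varepsilon}$ (with $j=4$ and Lemma~\ref{A1property}~$ii)$). Your plan of commuting all $k$ derivatives simultaneously through the full $Z_{n+3}$ cannot close without such a reduction to order $k-1$ on the $f$-side.
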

\begin{proof}
 We first prove
\begin{equation}
 \label{inftyinduct}
\|\gamma^{3(n+2)+9+8k+\epsilon}T^k f\|_{L^{\infty}}\lesssim
Q_k(f).
\end{equation}
The proof is by induction in which the first step is proved as was
Theorem \ref{k=0}.
 We choose $j=3$ in (\ref{basiciterate}) and then apply (\ref{basiciterate})
to $\gamma^{3(n+2)+7k}f$ to get
\begin{equation*}
\gamma^{3(n+2)+9+7k}f=
 Z_1\gamma^2\mdbar f
  \nonumber
 +Z_1\gamma^2\mdbar^{\ast} f
  +Z_{3} \gamma^{3(n+2)+7k}f.
\end{equation*}
We then apply $\gamma^{\epsilon}(\gamma T)^k$, where $T$ is a
tangential operator. We use the commutator theorem, Theorem
\ref{commutator}, to show
\begin{align}
\label{kderinfty}
  \gamma^{3(n+2)+9+8k+\epsilon}T^k f=& \gamma^{\epsilon}\sum_{j=0}^{k-1}
Z_{3}
 \gamma^{3(n+2)+7k+j}T^jf
+ \gamma^{\epsilon}\gamma T Z_{3}
 \gamma^{3(n+2)+8k-1}T^{k-1}f\\
\nonumber
 &+
 \gamma^{\epsilon}\sum_{j=0}^k
  Z_1\gamma^{j+2}
 T^j
 \mdbar f
 +
  \gamma^{\epsilon}\sum_{j=0}^k
  Z_1\gamma^{j+2}
 T^j
 \mdbar^{\ast} f.
\end{align}
By Lemma \ref{weightedz} and the induction hypothesis, we conclude
the $L^{\infty}$ norm of the first term on the right hand side of
(\ref{kderinfty}) is bounded by $Q_{k-1}(f)$.

In the same way we proved Lemma \ref{A1property}, we have
\begin{equation*}
\gamma T Z_3:L^{\infty,3+\epsilon',0}(D)\rightarrow
L^{\infty,\epsilon,0}(D),
\end{equation*}
for some $0<\epsilon'<\epsilon$ and so the $L^{\infty}$ norm of
the second term is bounded by
\begin{equation*}
\|\gamma^{3(n+2)+8k+2+\epsilon'}T^{k-1}f\|_{L^{\infty}}
 \lesssim \|\gamma^{3(n+2)+9+8(k-1)}T^{k-1}f\|_{L^{\infty}}\lesssim
Q_{k-1}(f).
\end{equation*}
The last two terms on the right side of (\ref{kderinfty}) are
obviously bounded by $Q_{k}(f)$, and thus we are done with the
proof of (\ref{inftyinduct}).

To finish the proof of the lemma, we follow the proof of
(\ref{inftyinduct}), and choose $k=4$ in (\ref{basiciterate}),
then apply (\ref{basiciterate}) to $\gamma^{3(n+2)+7k}f$, and
again apply the operators $\gamma^{\epsilon}(\gamma T)^k$, where
$T$ is a tangential operator. In this way, we show
\begin{align}
\label{kder} \gamma^{3(n+2)+12+8k+\epsilon}T^k f=&
\gamma^{\epsilon}\sum_{j=0}^{k-1} Z_{4}
 \gamma^{3(n+2)+7k+j}T^jf
+ \gamma^{\epsilon}\gamma T Z_{4}
 \gamma^{3(n+2)+8k-1}T^{k-1}f\\
\nonumber
 &+
 \gamma^{\epsilon}\sum_{j=0}^k
  Z_1\gamma^{j+2}
 T^j
 \mdbar f
 +
  \gamma^{\epsilon}\sum_{j=0}^k
  Z_1\gamma^{j+2}
 T^j
 \mdbar^{\ast} f.
\end{align}
By Theorems \ref{dertype1} $i)$ and \ref{E1properties} $ii)$, for
some $\epsilon'>0$, the first sum on the right hand side of
(\ref{kder}) has its $C^{1/4-\varepsilon}$ norm bounded by
\begin{equation*}
\| Z_3 \gamma^{3(n+2)+7k+\epsilon'+j}T^jf\|_{L^{\infty}}\lesssim
Q_{k-1}(f)
\end{equation*}
from above.  We can use Lemma \ref{A1property} $ii)$ to show the
$C^{1/4-\varepsilon}$ norm of the second term is bounded by
\begin{equation*}
\|\gamma^{3(n+2)+10+8(k-1)+\epsilon'}T^{k-1}f\|_{\infty} \lesssim
Q_{k-1}(f)
\end{equation*}
as above.

The last two terms on the right hand side of (\ref{kder}) are
easily seen to be bounded by $Q_k(f)$, and this finishes Lemma
\ref{tanglemma}.
\end{proof}
In order to generalize Lemma \ref{tanglemma} to include
non-tangential operators, we use the familiar argument of
utilizing the ellipticity of
 $\mdbar\oplus\mdbar^{\ast}$ to express a normal derivative of a component
 of a $(0,q)$-form, $f$, in terms of
tangential operators acting on components of $f$ and components of
$\mdbar f$ and $\mdbar^{\ast}f$.
 With the $(0,q)$-form $f$ written
\begin{equation*}
f=\sum_{|J|=q}f_J\bar{\omega}^J
\end{equation*}
locally,
 we have the decomposition in the following form:
\begin{equation}
 \label{normaldecomp}
\gamma N f_J=\sum_{jK}a_{JjK}\gamma T_jf_K +
    \sum_L b_{JL}f_L+ \sum_M c_{JM}\gamma(\mdbar f)_M
+ \sum_P d_{JP}\gamma(\mdbar^{\ast}f)_P,
\end{equation}
where $N=L_n+\bar{L}_n$ is the normal vector field, and
$T_1,\ldots, T_{2n-1}$ are the tangential fields as described in
Section \ref{est}.  The coefficients $a_{JjK}$, $b_{JL}$,
$c_{JM}$, and $d_{JP}$ are all of the form $\lre_{0,0}$ and the
index sets are strictly ordered with $J,K,L,M,P\subset
\{1,\ldots,n\}$, $|J|=|K|=|L|=q$, $|M|=q+1$, $|P|=q-1$,
$j=1,\ldots, 2n-1$.  The decomposition is well known in the smooth
case (see \cite{LiMi}) and to verify (\ref{normaldecomp}) in a
neighborhood of $\gamma=0$, one may use the coordinates
$u_{j_1},\ldots,u_{j_m},v_{j_{m+1}},\ldots,v_{j_{2n}}$ as in
(\ref{rcoor}) above.  For instance, integrating by parts to
compute $\mdbar^{\ast} f$ leads to terms of the form
$\lre_{0,-1}f_J$, whereby multiplication by $\gamma$ allows us to
absorb these terms into $b_{JL}$.

It is then straightforward how to generalize Lemma
\ref{tanglemma}.   Suppose $D^k$ is a $k^{th}$ order differential
operator which contains the normal field at least once.   In
$\gamma^kD^k$ we commute $\gamma N$ with terms of the form $\gamma
T$, where $T$ is tangential, and we consider the operator $D^k=
D^{k-1}\circ \gamma N$, where $D^{k-1}$ is of order $k-1$.  The
error terms due to the commutation involve differential operators
of order $\le k-1$.  From (\ref{normaldecomp}) we just have to
consider $D^{k-1}\gamma T f$, $D^{k-1}\mdbar f$, and
$D^{k-1}\mdbar^{\ast} f$.  The last two terms are bounded by
$Q_{k-1}(f)$, and we repeat the process with $D^{k-1}\gamma T f$,
until we are left with $k$ tangential operators for which we can
apply Lemma \ref{tanglemma}.

We thus obtain the weighted $C^k$ estimates
\begin{thrm}
 Let $f\in L^2_{0,q}(D)\cap\mbox{Dom}(\mdbar)\cap
\mbox{Dom}(\mdbar^{\ast})$, $q\ge 1$, $\alpha<1/4$, and
$\epsilon>0$. Then
\begin{equation*}
\|\gamma^{3(n+6)+8k+\epsilon} f\|_{C^{k+\alpha}}\lesssim Q_k(f).
\end{equation*}
\end{thrm}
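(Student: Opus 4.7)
The plan is to reduce the full $C^{k+\alpha}$ bound to the tangential $C^{k+\alpha}$ bound already established in Lemma \ref{tanglemma}, by using the ellipticity of $\mdbar\oplus\mdbar^{\ast}$ in the form of the decomposition (\ref{normaldecomp}). First I will show that to bound the $C^{k+\alpha}$ norm of $\gamma^{3(n+6)+8k+\epsilon}f$ it suffices to bound $\|\gamma^{3(n+6)+8k+\epsilon}D^kf\|_{C^{1/4-\varepsilon}}$ for every $k$-th order composition of smooth vector fields $D^k$ on $\overline{D}$; this is a standard unwinding of the H\"older norm. If $D^k$ contains no normal derivative, then $D^k$ is itself a composition of $k$ tangential vector fields of the type handled by Lemma \ref{tanglemma} (up to the factors of $\gamma$ which we build into the weight $\gamma^{3(n+6)+8k+\epsilon}$), and we are done.

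The remaining case is when $D^k$ contains at least one copy of the normal field $N$. I will induct on $k$, the inductive step also being an internal induction on the number $\ell\ge 1$ of occurrences of $N$ inside $D^k$. Write $\gamma^k D^k$ as a composition involving one factor $\gamma N$; by commuting $\gamma N$ past the remaining factors of $\gamma T$ (with $T$ tangential) using the product rule, I obtain an expression of the form $D^{k-1}\circ \gamma N$ plus error terms of pure order $\le k-1$. On the $f_J$ component, the factor $\gamma N f_J$ is then replaced, via (\ref{normaldecomp}), by a sum of
\begin{equation*}
 \sum_{jK} a_{JjK}\gamma T_j f_K + \sum_L b_{JL} f_L + \sum_M c_{JM}\gamma(\mdbar f)_M + \sum_P d_{JP}\gamma(\mdbar^{\ast}f)_P,
\end{equation*}
whose coefficients are of class $\lre_{0,0}$. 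Plugging this back into $D^{k-1}\circ \gamma N$ gives
\begin{itemize}
\item a term $D^{k-1}(\gamma T f)$, in which the number of $N$'s has dropped by one;
\item terms $D^{k-1}f$ (of order strictly less than $k$);
\item terms $D^{k-1}(\gamma\mdbar f)$ and $D^{k-1}(\gamma \mdbar^{\ast} f)$, which after distributing derivatives are bounded in $C^{0}$ (hence in $C^{1/4-\varepsilon}$ after one more application of the $Z_{j}$ machinery as in Lemma \ref{tanglemma}) by $Q_{k-1}(f)$ and $Q_k(f)$ respectively, once the weight $\gamma^{3(n+6)+8k+\epsilon}$ is absorbed.
\end{itemize}
Iterating this reduction $\ell$ times eliminates all normal derivatives at the cost of error terms of order $\le k-1$ (handled by the outer induction on $k$) and $\mdbar$, $\mdbar^{\ast}$ terms (absorbed into $Q_k(f)$), leaving us with a purely tangential $k$-th order operator to which Lemma \ref{tanglemma} applies directly.

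The main obstacle, and the point requiring care, is the bookkeeping of the weight $\gamma^{3(n+6)+8k+\epsilon}$ throughout this reduction. Each commutator of $\gamma N$ past a factor of $\gamma T$ produces terms whose $\gamma$-weight is at worst unchanged (because the vector fields have been prepared with the extra $\gamma$ factor precisely so that commutators do not blow up at the singular set $\{\gamma=0\}$, as discussed in the introduction), and each invocation of (\ref{normaldecomp}) costs only a bounded multiplicative coefficient of type $\lre_{0,0}$. The total weight budget $3(n+6)+8k+\epsilon$ is exactly what is consumed by the $k$ applications of Lemma \ref{tanglemma} together with the ambient $3(n+6)+\epsilon$ needed for the $k=0$ case; matching these budgets term-by-term through the induction is the technical crux, but once done it yields the claimed estimate $\|\gamma^{3(n+6)+8k+\epsilon}f\|_{C^{k+\alpha}}\lesssim Q_k(f)$ for $\alpha<1/4$.
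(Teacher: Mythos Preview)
Your proposal is correct and follows essentially the same approach as the paper: reduce general $D^k$ to purely tangential $T^k$ by commuting $\gamma N$ to the right and then invoking the decomposition (\ref{normaldecomp}), with the resulting $\mdbar f$, $\mdbar^{\ast}f$, and lower-order terms absorbed into $Q_k(f)$, and finally applying Lemma \ref{tanglemma}. The paper's own argument is the paragraph immediately preceding the theorem, and your write-up is simply a more explicit version of that same reduction (with the double induction on $k$ and on the number of normal factors spelled out).
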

As an immediate consequence we obtain weighted $C^k$ estimates for
the canonical solution to the $\mdbar$-equation.
\begin{cor}
Let $q\ge 2$ and let $N_q$ denote the $\mdbar$-Neumann operator
for $(0,q)$-forms. Let $f$ be a $\mdbar$-closed $(0,q)$-form. Then
for $\alpha<1/4$ and $\epsilon>0$, the canonical solution,
$u=\mdbar^{\ast}N_q f$ to $\mdbar u=f$, satisfies
\begin{equation*}
\|\gamma^{3(n+6)+8k+\epsilon} u\|_{C^{k+\alpha}}\lesssim
 \| \gamma^{k+2} f\|_{C^{k}} + \|f\|_2.
\end{equation*}
\end{cor}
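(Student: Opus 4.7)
The plan is to derive this corollary as a direct application of the weighted $C^k$ estimate established just before it (the unnumbered theorem bounding $\|\gamma^{N(k)}f\|_{C^{k+\alpha}}$ by $Q_k(f)$), together with standard Hodge-theoretic identities for the canonical solution. The strategy is essentially to verify the three ingredients of $Q_k(u)$ when $u=\mdbar^{\ast}N_q f$: the term involving $\mdbar u$, the term involving $\mdbar^{\ast} u$, and the $L^2$ term.

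First I would check that $u=\mdbar^{\ast}N_q f$ lies in $L^2_{0,q-1}(D)\cap\mbox{Dom}(\mdbar)\cap\mbox{Dom}(\mdbar^{\ast})$, which is immediate from the mapping properties of $N_q$ and the fact that $\mdbar^{\ast}N_q f$ takes values in $\mbox{Dom}(\mdbar)$ (in particular $u$ is the canonical solution precisely because $u\in(\ker\mdbar)^{\perp}=\overline{\mbox{Range}\, \mdbar^{\ast}}$). Since $q-1\ge 1$, the hypothesis $q\ge 1$ in the main theorem is satisfied.

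Next I would compute $\mdbar u$ and $\mdbar^{\ast}u$. Using $\mdbar f=0$ together with the fundamental identity $I=\mdbar\mdbar^{\ast}N_q+\mdbar^{\ast}\mdbar N_q$ on $(0,q)$-forms, and the fact that $N_q$ commutes with $\mdbar$ on closed forms so that $\mdbar N_q f = N_{q+1}\mdbar f=0$, we conclude
\begin{equation*}
\mdbar u = \mdbar\mdbar^{\ast}N_q f = f,\qquad \mdbar^{\ast}u = (\mdbar^{\ast})^2 N_q f = 0.
\end{equation*}
Therefore $\|\gamma^{k+2}\mdbar u\|_{C^k}=\|\gamma^{k+2}f\|_{C^k}$ and $\|\gamma^{k+2}\mdbar^{\ast}u\|_{C^k}=0$.

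Finally I would apply the weighted $C^k$ theorem (with $N(k)=3(n+6)+8k+\epsilon$) to $u$, which gives
\begin{equation*}
\|\gamma^{3(n+6)+8k+\epsilon}u\|_{C^{k+\alpha}}\lesssim \|\gamma^{k+2}f\|_{C^k}+\|u\|_2,
\end{equation*}
and then absorb $\|u\|_2$ using the standard $L^2$ bound $\|\mdbar^{\ast}N_q f\|_2\lesssim \|f\|_2$ for the canonical solution on strictly pseudoconvex Henkin-Leiterer domains. The only non-routine point is the last one: justifying the $L^2$ estimate on a domain whose boundary may have Morse-type singularities. On such domains the $\mdbar$-Neumann operator is still bounded on $L^2_{0,q}$ for $q\ge 1$ (by the approximation argument via the smooth domains $D_{\epsilon}$ together with uniform basic estimates, as used throughout \cite{Eh09a}), so this step is available off the shelf; the rest of the proof is formal manipulation.
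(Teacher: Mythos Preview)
Your proposal is correct and is precisely the argument the paper has in mind: the corollary is stated there as ``an immediate consequence'' of the preceding weighted $C^k$ theorem, with no further details given. Applying that theorem to $u=\mdbar^{\ast}N_q f$ (a $(0,q-1)$-form with $q-1\ge 1$), using $\mdbar u=f$, $\mdbar^{\ast}u=0$, and the $L^2$ bound $\|\mdbar^{\ast}N_q f\|_2\lesssim\|f\|_2$, is exactly the intended derivation.
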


\end{document}